\numberwithin{equation}{section}
\newcommand*\linenomathpatchAMS[1]{%
  \expandafter\pretocmd\csname #1\endcsname {\linenomathAMS}{}{}%
  \expandafter\pretocmd\csname #1*\endcsname{\linenomathAMS}{}{}%
  \expandafter\apptocmd\csname end#1\endcsname {\endlinenomath}{}{}%
  \expandafter\apptocmd\csname end#1*\endcsname{\endlinenomath}{}{}%
}
  \let\linenomathAMS\linenomathWithnumbers
  \patchcmd\linenomathAMS{\advance\postdisplaypenalty\linenopenalty}{}{}{}
  \let\linenomathAMS\linenomathNonumbers
\def\re{\mathbb R}
\def\na{\mathbb N}
\def\lV{\left\Vert }
\def\rV{\right\Vert }
\def\lv{\left\vert }
\def\rv{\right\vert}
\DeclareMathOperator{\Id}{Id}
\DeclareMathOperator{\aff}{aff}
\DeclareMathOperator{\dist}{dist}
\DeclareMathOperator{\circum}{circ}
\newtheorem{theorem}{Theorem}[section]
\newtheorem{lemma}[theorem]{Lemma}
\newtheorem{corollary}[theorem]{Corollary}
\newtheorem{proposition}[theorem]{Proposition}
\theoremstyle{definition}
\newtheorem{definition}[theorem]{Definition}
\newtheorem{example}[theorem]{Example}
\theoremstyle{remark}
\Crefname{lemma}{Lemma}{Lemmas}
\Crefname{proposition}{Proposition}{propositions}
\Crefname{proposition}{Proposition}{Propositions}
\Crefname{corollary}{Corollary}{Corollaries}
\newlist{lista}{enumerate}{1}
\setlist[lista]{label=\alph*., nosep,leftmargin=*,align=right}
\newlist{listi}{enumerate}{1}
\setlist[listi]{label={\upshape(\roman*\upshape)},leftmargin=*,align=right, widest=iii,nosep, format=\bf}
\def\includegraphics{}
\begin{document}

%%% Start of article front matter
\begin{frontmatter}

\begin{fmbox}
% \dochead{Research}
%%%%%%%%%%%%%%%%%%%%%%%%%%%%%%%%%%%%%%%%%%%%%%
%%                                          %%
%% Enter the title of your article here     %%
%%                                          %%
%%%%%%%%%%%%%%%%%%%%%%%%%%%%%%%%%%%%%%%%%%%%%%

\title{Circumcentering approximate reflections for solving the convex feasibility problem}

%%%%%%%%%%%%%%%%%%%%%%%%%%%%%%%%%%%%%%%%%%%%%%
%%                                          %%
%% Enter the authors here                   %%
%%                                          %%
%% Specify information, if available,       %%
%% in the form:                             %%
%%   <key>={<id1>,<id2>}                    %%
%%   <key>=                                 %%
%% Comment or delete the keys which are     %%
%% not used. Repeat \author command as much %%
%% as required.                             %%
%%                                          %%
%%%%%%%%%%%%%%%%%%%%%%%%%%%%%%%%%%%%%%%%%%%%%%

\author[
  addressref={fgv},                   % id's of addresses, 
  email={guilhermehmaraujo@gmail.com}   % email address
]{\inits{G.A.}\fnm{G.} \snm{Araújo}}
\author[
  addressref={impa},
  email={reza.arefidamghani@impa.br}
]{\inits{R.A.}\fnm{R.} \snm{Arefidamghani}}
\author[
  addressref={fgv},                   % id's of addresses, \emph{e.g.} {aff1,aff2}
  % corref={fgv},                       % id of corresponding address, if any
% noteref={n1},                        % id's of article notes, if any
  email={rogerbehling@gmail.com}   % email address
]{\inits{R.B.}\fnm{R.} \snm{Behling}}
\author[
  addressref={niu},
  email={yunierbello@niu.edu}
]{\inits{Y.B.-C.}\fnm{Y.} \snm{Bello-Cruz}}
\author[
  addressref={impa},
  email={iusp@impa.br}
]{\inits{A.I.}\fnm{A.} \snm{Iusem}}
\author[
  corref={ufsc},
  addressref={ufsc},
  email={l.r.santos@ufsc.br}
]{\inits{L.-R.S.}\fnm{L.-R.} \snm{Santos}}

%%%%%%%%%%%%%%%%%%%%%%%%%%%%%%%%%%%%%%%%%%%%%%
%%                                          %%
%% Enter the authors' addresses here        %%
%%                                          %%
%% Repeat \address commands as much as      %%
%% required.                                %%
%%                                          %%
%%%%%%%%%%%%%%%%%%%%%%%%%%%%%%%%%%%%%%%%%%%%%%

\address[id=fgv]{%                           % unique id
  \orgdiv{School of Applied Mathematics},             % department, if any
  \orgname{Funda\c{c}\~ao Get\'ulio Vargas},          % university, etc
  \city{Rio de Janeiro, RJ},                              % city
  \cny{BR}                                    % country
}

\address[id=impa]{%                           % unique id
  % \orgdiv{School of Applied Mathematics},             % department, if any
  \orgname{Instituto de Matemática Pura e Aplicada},          % university, etc
  \city{Rio de Janeiro, RJ},                              % city
  \cny{BR}                                    % country
}

\address[id=niu]{%                           % unique id
  \orgdiv{Department of Mathematical Sciences},             % department, if any
  \orgname{Northern Illinois University},          % university, etc
  \city{DeKalb, IL},                              % city
  \cny{US}                                    % country
}
\address[id=ufsc]{%
  \orgdiv{ Department of Mathematics},
  \orgname{Federal University of Santa Catarina},
  %\street{},
  %\postcode{}
  \city{Blumenau, SC},
  \cny{BR}
}

%%%%%%%%%%%%%%%%%%%%%%%%%%%%%%%%%%%%%%%%%%%%%%
%%                                          %%
%% Enter short notes here                   %%
%%                                          %%
%% Short notes will be after addresses      %%
%% on first page.                           %%
%%                                          %%
%%%%%%%%%%%%%%%%%%%%%%%%%%%%%%%%%%%%%%%%%%%%%%

%\begin{artnotes}
%%\note{Sample of title note}     % note to the article
%\note[id=n1]{Equal contributor} % note, connected to author
%\end{artnotes}

\end{fmbox}% comment this for two column layout

%%%%%%%%%%%%%%%%%%%%%%%%%%%%%%%%%%%%%%%%%%%%%%%
%%                                           %%
%% The Abstract begins here                  %%
%%                                           %%
%% Please refer to the Instructions for      %%
%% authors on https://www.biomedcentral.com/ %%
%% and include the section headings          %%
%% accordingly for your article type.        %%
%%                                           %%
%%%%%%%%%%%%%%%%%%%%%%%%%%%%%%%%%%%%%%%%%%%%%%%

\begin{abstractbox}

\begin{abstract} % abstract
% We introduce an Approximate Circumcentered Reflexion Method (CARM) 
% and an Approximate Method of Alternating Projections (MAAP) applied for solving the Convex Feasibility Problem.
% In these approximate methods
% the projection onto a closed and convex set $K$ is replaced by the projection onto
% a closed and convex set $S(x)$ which separates the current iterate $x$ from $K$. In rather
% general situations, the separating set can be taken so that the projection onto 
% it is computationally trivial (\emph{e.g.}, a half-space, or a Cartesian product of half-spaces). 
% Under an error bound assumption on the convex sets, it is known that both the exact Circumcentered Reflexion 
% Method (CRM), and the exact 
% Method of Alternating Projections (MAP) 
% converge linearly, with asymptotic constants depending on a parameter of the error bound, and that the
% asymptotic constant for CRM is better than the one for MAP.  
% We prove that under an appropriate error bound assumption,
% involving also the separating set, the same results hold for CARM and MAAP, and that in generic situations
% the separating set can be chosen so that the asymptotic constants for CARM and CRM coincide, and the same happens
% with the asymptotic constants of MAAP and MAP, so that in these cases the use of computationally inexpensive
% projections causes no deterioration at all in the convergence rates. 
The circumcentered-reflection method (CRM) has been applied for solving convex feasibility problems. CRM iterates by computing a circumcenter upon a composition of reflections with respect to convex sets. Since reflections are based on exact projections, their computation might be costly. In this regard,  we introduce the circumcentered approximate-reflection method (CARM), whose reflections rely on outer-approximate projections. The appeal of CARM is that, in rather general situations, the approximate projections we employ are available under low computational cost. 
  We derive convergence of CARM and linear convergence under an error bound condition. We also present successful theoretical and numerical comparisons of CARM to the original CRM, to the classical method of alternating projections (MAP) and to a correspondent outer-approximate version of MAP, referred to as MAAP. Along with our results and numerical experiments, we present a couple of illustrative examples.

\end{abstract}

%%%%%%%%%%%%%%%%%%%%%%%%%%%%%%%%%%%%%%%%%%%%%%
%%                                          %%
%% The keywords begin here                  %%
%%                                          %%
%% Put each keyword in separate \kwd{}.     %%
%%                                          %%
%%%%%%%%%%%%%%%%%%%%%%%%%%%%%%%%%%%%%%%%%%%%%%

\begin{keyword}
\kwd{Convex feasibility problem}
\kwd{Circumcentered-reflection method} 
\kwd{Alternating projections}
\kwd{Approximate projection}
\kwd{Convergence rate}
\kwd{Error bound}
\end{keyword}

% MSC classifications codes, if any
\begin{keyword}[class=AMS]
\kwd{49M27}
\kwd{65K05}
\kwd{65B99}
\kwd{90C25}
\end{keyword}

\end{abstractbox}
%
%\end{fmbox}% uncomment this for two column layout

\end{frontmatter}

%%%%%%%%%%%%%%%%%%%%%%%%%%%%%%%%%%%%%%%%%%%%%%%%
%%                                            %%
%% The Main Body begins here                  %%
%%                                            %%
%% Please refer to the instructions for       %%
%% authors on:                                %%
%% https://www.biomedcentral.com/getpublished %%
%% and include the section headings           %%
%% accordingly for your article type.         %%
%%                                            %%
%% See the Results and Discussion section     %%
%% for details on how to create sub-sections  %%
%%                                            %%
%% use \cite{...} to cite references          %%
%%  \cite{koon} and                           %%
%%  \cite{oreg,khar,zvai,xjon,schn,pond}      %%
%%                                            %%
%%%%%%%%%%%%%%%%%%%%%%%%%%%%%%%%%%%%%%%%%%%%%%%%

%%%%%%%%%%%%%%%%%%%%%%%%% start of article main body
% <put your article body there>

%%%%%%%%%%%%%%%%
%% Background %%
%%
\section{Introduction}\label{s0}

We consider the convex feasibility problem (CFP), consisting of 
finding a point in the intersection of a finite number of closed convex sets.
We are going to employ Pierra's product space reformulation~\cite{Pierra:1984} in order to reduce CFP to seeking a point common to a closed convex set and an affine subspace. %Also, feasibility sets of general convex optimization problems are standardly split into convex inequalities and affine equalities, which define a closed convex set and an affine subspace, respectively.

Projection-based methods are usually utilized for solving CFP. Widely known are the Method of Alternating Projections (MAP) \cite{Kaczmarz:1937,Bauschke:1996}, the Douglas-Rachford method (DRM) \cite{Douglas:1956,Lions:1979} and the Cimmino method \cite{Cimmino:1938, Bauschke:1996}. Recently, the Circumcentered-Reflection method (CRM) has been developed as a powerful new tool for solving CFP, outperforming MAP and DRM. It was introduced in \cite{Behling:2018a,Behling:2018} and further enhanced in~\cite{Arefidamghani:2021,Bauschke:2018a,Bauschke:2021c,Bauschke:2020c,Bauschke:2021,Bauschke:2021a,Behling:2020,Dizon:2019,Dizon:2020,Ouyang:2020}. In particular, CRM was shown in \cite{Behling:2020} to converge to a solution of CFP and it was proven in \cite{Arefidamghani:2021} that linear convergence is obtained in the presence of an error bound condition.

Computing exact projections onto general convex sets can be, context depending, too demanding in comparison to solving the given CFP itself. 
Bearing this in mind, we present in this paper a version of CRM employing \emph{outer-approximate projections}. These approximate projections still enjoy some of the properties of the exact ones, having the advantage of being potentially more tractable. For instance, they cover the subgradient projections of Fukushima~\cite{Fukushima:1983}.

 % We prove that employing these outer-approximate projections within CRM preserves the convergence results from \cite{Behling:2020} and, under a Lipschitzian error bound, linear convergence as in \cite{Arefidamghani:2021} is maintained as well. In this view, the results shown here are a paramount of the aforementioned works \cite{Behling:2020, Arefidamghani:2021}. Our method will be referred to as Circumcentered-Approximate Reflection Method (CARM). Despite featuring outer approximations,  CARM performs competitively to CRM in our numerical experiments. 

Consider closed  convex sets $K_1, \dots , K_m\subset\re^n$ with nonempty intersection and the CFP of finding a point  
\(x\in \bigcap_{i=1}^mK_i.\)  In the eighties, Pierra noted that this problem  is  directly related to  the problem of finding a point $\mathbf{x} \in \mathbf{K}\cap \mathbf{D}$, where $\mathbf{K} := K_1 \times\dots\times K_m\subset
\re^{nm}$ and the diagonal space $\mathbf{D}=\{(x,\dots,x): x\in\re^n\}\subset\re^{nm}$. In fact,  $x\in  \bigcap_{i=1}^mK_i$, if, and only if  $\mathbf{ x}=(x,\ldots, x) \in \mathbf{K} \cap \mathbf{D} $. Thus, if we solve any intersection problem featuring a closed convex set and an affine subspace, we cover the general CFP. Let us proceed in this direction by considering a closed convex set $K\subset \re^n$ and an affine subspace $U\subset \re^n$ with nonempty intersection. 
From now on, the CFP we are going to focus on is the one of tracking  a point in $K\cap U$.

We consider now two operators $A$ and $B:\re^n\to\re^n$ and define $T=A\circ B$.
Under adequate assumptions, the sequence $\{x^k\}_{k\in\na}\subset\re^n$ defined by 
\begin{equation}\label{e0}
x^{k+1}=T(x^k)=A(B(x^k))
\end{equation}
is expected
to converge to a common fixed point of $A$ and $B$. If the operators $A$ and $B$ are the projectors onto the convex sets $U$ and $K$, that is, $A=P_{U}, B=P_{K}$, then problem \eqref{e0} provides the iteration of the famous method of alternating projections (MAP). Moreover,  
the set of common fixed points of $A$ and $B$ in this case is precisely $K\cap U$ and MAP converges to a point in $K\cap U$ for any starting point in $\re^n$; see, for instance, \cite{Bauschke:1996}.

 The circumcentered-reflection method (CRM) introduced in \cite{Behling:2018a,Behling:2018} can be seen as an acceleration technique for the sequence defined by \eqref{e0}. We showed in \cite{Arefidamghani:2021} that indeed CRM achieves a better linear rate than MAP in the presence of an error bound. Moreover, in general,  there is abundant numerical evidence
that CRM outperforms MAP (see \cite{Behling:2018a,Behling:2020,Dizon:2019}).

Define the reflection operators $A^R, B^R:\re^n\to\re^n$ as $A^R=2A-\Id, B^R=2B-\Id$,
where $\Id$ stands for the identity operator in $\re^n$. The CRM operator $C:\re^n\to\re^n$ is defined
as  
\begin{equation}\label{ccoperator}C(x)= \circum(x, B^R(x), A^R(B^R(x))),\end{equation}
\emph{i.e.}, the circumcenter of the three points $x, B^R(x),A^R(B^R(x))$. The CRM sequence $\{x^k\}_{k\in\na}\subset\re^n$,
starting at some $x^0\in\re^n$, is then defined as
$x^{k+1}=C(x^k)$. For three non-collinear points $x,y,z\in\re^n$, the {\it circumcenter} $\circum(x,y,z)$ 
is the center of the unique two-dimensional circle 
passing through $x,y,z$ (or, equivalently, the point in the affine manifold $\aff\{x,y,z\}$ generated by $x,y,z$ and equidistant to these three points). 
% CRM and generalized circumcenters were studied in 
% \cite{BOW1,BOW2,BOW3,Behling:2020,Behling:2020} (MAIS REFERENCES). 
In particular, if, $A=P_{U}$ and $B=P_K$, that is, $A^R=2P_{U}-\Id, B^R=2P_{K}-\Id$, the CRM sequence, 
starting at some $x^0\in U$, 
\begin{equation}\label{e01}
x^{k+1}=C(x^k)= \circum(x^k, B^R(x^k), A^R(B^R(x^k))),
\end{equation}
converges to a point in $K\cap U$ as long as the initial point lies in $U$. If in addition a certain error bound between $K$ and $U$
holds, then CRM converges linearly, and with a better rate than MAP. 

%From now on we will consider CRM as an acceleration procedure for MAP, \emph{i.e.}, 
%applied for solving CFP with two closed and
%convex sets $K_1,K_2\subset\re^n$.
%As we already mentioned, the sequence generated by MAP converges to a point in $K_1\cap K_2$ 
%provided that this set is nonempty. Unfortunately, this is not always the case
%for CRM; an example where it fails to converge can be found in \cite{AragonArtacho:2020}. Fortunately, this regrettable 
%situation cannot occur when one of the two sets is an affine manifold, as was proved in \cite{Behling:2020}. 
%Also fortunately, this seemingly restrictive assumption encompasses the application of MAP in the product
%space $\re^{n m}$, as mentioned before, so that CRM can be successfully applied for accelerating SiPM
%applied to CFP for $m$ arbitrary closed and convex sets in $\re^n$. It was proved 
%in \cite{Behling:2020} that CRM produces better steps than MAP when applied to solving CFP with two closed and convex 
%sets $K_1,K_2\subset\re^n$, one of them being an affine manifold.

%Now we focus in the acceleration effect of CRM with respect to MAP. There is abundant numerical evidence
%of this effect (see \cite{Behling:2018a,Behling:2020,Behling:2020,Dizon:2019}). 

In this paper, we introduce approximate versions of MAP and CRM for solving CFP, which we call MAAP and CARM. The MAAP and CARM iterations are computed by \eqref{e0} and \eqref{e01} with $A$ being the exact projector onto $U$ and $B$ an approximate projector onto $K$. The approximation 
consists of replacing  at each iteration
the set $K$ by a larger set separating the current iterate from $K$. This separating scheme is rather general, and for a large family
of convex sets, includes
particular instances where the separating set is a half-space, or a Cartesian product of half-spaces, in which cases all the involved 
projections have a very low computational cost. One could fear that this significant reduction in the
computational cost per iteration could be nullified by a substantial slowing down of the process as a whole, through a 
deterioration of the convergence speed. However, we show that this is not necessarily the case. Indeed, we prove that under error bound conditions
separating schemes are available so that MAAP and CARM enjoy linear convergence rates, with the linear rate of CARM being strictly better than MAAP. Our numerical experiments confirm these statements, and more than that, they show CARM outperforming MAP, CRM and MAAP in terms of computational time.

%precisely the same bounds on the asymptotic constants as
%for the exact MAP and CRM, namely $\sqrt{1-\omega^2}$ for MAAP and $\sqrt{(1-\omega^2)/(1+\omega^2)}$ for CARM.

\section{Preliminaries}\label{s1}
 
We recall first the definition of Q-linear and R-linear convergence.

\begin{definition}\label{d1}
Consider a sequence $\{z^k\}_{k\in\na}\subset\re^n$ converging to $z^*\in\re^n$. Assume that $z^k\ne z^*$ for all $k\in\na$.
Let $q\coloneqq \limsup_{k\to\infty}\frac{\lV z^{k+1}-z^*\rV}{\lV z^k-z^*\rV}$. Then, the 
sequence $\{z^k\}_{k\in\na}$ converges
\begin{listi}
      \item {Q}-superlinearly, if $q=0,$
   
   \item {Q}-linearly, if $q\in(0,1),$
     
   \item {Q}-sublinearly, if $q\ge 1.$

\end{listi}
Let
$r\coloneqq \limsup_{k\to\infty}\lV z^k-z^*\rV^{1/k}$. Then, the 
sequence $\{z^k\}_{k\in\na}$ converges
\begin{listi}[resume]
\item {R}-superlinearly, if $r=0,$

\item {R}-linearly, if $r\in(0,1),$

\item  {R}-sublinearly, if  $r\ge 1.$

\end{listi}
 The values
$q$ and $r$ are called asymptotic constants of $\{z^k\}_{k\in\na}$.
\end{definition} 

It is well known that Q-linear convergence implies R-linear convergence (with the same asymptotic constant), but the
converse statement does not hold true \cite{Ortega:2000}.

We remind now the notion of Fej\'er monotonicity.

\begin{definition} 
A sequence $\{z^k\}_{k\in\na}\subset\re^n$ is Fej\'er monotone with respect to a nonempty closed convex set $M\subset \re^n$ when  
$\lV z^{k+1}-y\rV\le\lV z^k-y\rV$ for all $k\in\na$ and $y\in M$.
\end{definition}

\begin{proposition}\label{p1}
Suppose that the sequence $\{z^k\}_{k\in\na}$ is Fej\'er monotone with respect to the closed convex set  $M\subset \re^n$.  Then, 
\begin{listi}
\item
$\{z^k\}_{k\in\na}$ is bounded.
\item if a cluster point $\bar z$ of $\{z^k\}_{k\in\na}$ belongs to $M$ we have  $\lim_{k\to\infty}z^k=\bar z$.
\item if $\{z^k\}_{k\in\na}$ converges to $\bar z$ we get $\lV z^k-\bar z\rV\le 2\dist(z^k,M)$.
\end{listi}
\end{proposition}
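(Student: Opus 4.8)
The plan is to prove the three standard Fej\'er-monotonicity facts in the order stated, using only elementary properties of the norm and of convex sets.

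First, boundedness: I would fix any $y \in M$ (which exists since $M$ is nonempty). Fej\'er monotonicity gives $\lV z^{k+1}-y\rV \le \lV z^k-y\rV$ for all $k$, so the sequence $\{\lV z^k-y\rV\}_{k\in\na}$ is nonincreasing, hence bounded above by $\lV z^0-y\rV$. Therefore $\lV z^k\rV \le \lV z^0-y\rV + \lV y\rV$ for all $k$, proving (i). A byproduct worth recording here is that $\{\lV z^k-y\rV\}_{k\in\na}$ actually converges (monotone and bounded below by $0$), for every $y\in M$; this will be the engine for (ii).

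Second, convergence to a cluster point in $M$: suppose $\bar z \in M$ is a cluster point, so there is a subsequence $z^{k_j}\to\bar z$. Applying the previous observation with $y=\bar z\in M$, the full sequence $\{\lV z^k-\bar z\rV\}_{k\in\na}$ converges; since the subsequence $\lV z^{k_j}-\bar z\rV \to 0$, the limit of the whole sequence must be $0$, i.e. $z^k\to\bar z$. This gives (ii).

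Third, the distance estimate: assume $z^k\to\bar z$ (so necessarily $\bar z\in M$ by (ii), or simply take $\bar z\in M$ as the limit). Fix $k$ and let $w=P_M(z^k)$ be the (metric) projection of $z^k$ onto $M$, so $\lV z^k-w\rV = \dist(z^k,M)$ and $w\in M$. By Fej\'er monotonicity with $y=w$, the tail $\{\lV z^\ell - w\rV\}_{\ell\ge k}$ is nonincreasing, so passing to the limit $\ell\to\infty$ yields $\lV \bar z - w\rV \le \lV z^k - w\rV = \dist(z^k,M)$. Then the triangle inequality gives $\lV z^k-\bar z\rV \le \lV z^k-w\rV + \lV w-\bar z\rV \le 2\dist(z^k,M)$, proving (iii).

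I do not expect a genuine obstacle here; everything is a short consequence of monotone convergence of real sequences plus the triangle inequality. The only point requiring minor care is making sure that in (iii) we use Fej\'er monotonicity for the \emph{tail} starting at the index $k$ we fixed (which is immediate, since the defining inequality holds for all indices), and that the projection $P_M(z^k)$ is well defined, which it is because $M$ is nonempty, closed, and convex in $\re^n$.
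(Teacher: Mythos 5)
Your proof is correct. The paper itself gives no argument for this proposition; it simply cites Theorem~2.16 of Bauschke and Borwein (1996), so your self-contained derivation --- monotone convergence of $\{\lV z^k-y\rV\}$ for each $y\in M$, extraction of the limit in (ii), and the projection-plus-triangle-inequality estimate in (iii) --- is exactly the standard argument the citation points to, and all three steps are sound. One small caveat: the parenthetical remark in your part (iii), ``so necessarily $\bar z\in M$ by (ii),'' is not justified --- a Fej\'er monotone sequence can converge to a point outside $M$ (e.g.\ a constant sequence $z^k=z^0\notin M$ is Fej\'er monotone), and (ii) only applies when a cluster point is already known to lie in $M$. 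Fortunately your actual argument for (iii) never uses $\bar z\in M$: it only uses that $w=P_M(z^k)\in M$ and that the tail of $\{\lV z^\ell-w\rV\}_{\ell\ge k}$ is nonincreasing, so the estimate $\lV z^k-\bar z\rV\le 2\dist(z^k,M)$ stands. I would simply delete that parenthetical.
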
 

\begin{proof} 
See Theorem 2.16 in \cite{Bauschke:1996}. 
%for items (i) and (ii). Regarding (iii), note that for all $j>k$,
%we have 
%\begin{align}
%\lV z^k-z^j\rV&\le\lV z^k-P_M(z^k)\rV + \lV z^j-P_M(z^k)\rV\le 2\lV z^k-P_M(z^k)\rV\\&=2\dist(z^k,M),\label{aa2}
%\end{align}
%using the definition of Fejér monotonicity with $y=P_M(z^k)$ in the last inequality. 
%Taking limits in \eqref{aa2} with $j\to\infty$, we get the result.  
\end{proof}

Next we introduce the separating operator needed for the approximate versions of MAP and CRM, namely MAAP and CARM.

\begin{definition}\label{dd1}
Given a closed and convex set $K\subset\re^n$, a {\it separating operator} for $K$ is a point-to-set mapping 
$S:\re^n\to{\cal P}(\re^n)$ satisfying:
\begin{listi}
\item[A1)] $S(x)$ is closed and convex for all $x\in\re^n$.
\item[A2)] $K\subset S(x)$ for all $x\in\re^n$.
\item[A3)] If a sequence $\{z^k\}_{k\in\na}\subset\re^n$ converges to $z^*\in\re^n$ and 
$\displaystyle \lim_{k\to\infty}\dist(z^k,S(z^k))=0$ then $z^*\in K$.
\end{listi}
\end{definition}

 We have the following immediate result regarding \Cref{dd1}.

\begin{proposition}\label{pq1} If $S$ is a separating operator for $K$ then
$x\in S(x)$ if and only if $x\in K$.
\end{proposition}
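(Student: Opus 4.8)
The plan is to prove the two implications separately, both directly from the three defining properties of a separating operator.

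For the ``only if'' direction, suppose first that $x\in K$. By property A2 we have $K\subset S(x)$, hence $x\in S(x)$; this direction does not even use A1 or A3.

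For the ``if'' direction, suppose $x\in S(x)$. The key observation is to feed A3 the constant sequence $z^k\coloneqq x$ for all $k\in\na$. This sequence trivially converges to $z^*\coloneqq x$, and since $x\in S(x)$ we have $\dist(z^k,S(z^k))=\dist(x,S(x))=0$ for every $k$, so that $\lim_{k\to\infty}\dist(z^k,S(z^k))=0$. Property A3 then yields $z^*=x\in K$, as desired.

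I do not anticipate any real obstacle here: the statement is an immediate consequence of the definition, and the only thing worth spelling out is that A3 is applicable to constant sequences, which is what turns the qualitative limiting condition into the pointwise equivalence claimed.
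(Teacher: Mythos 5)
Your argument is correct and is essentially identical to the paper's own proof: one direction is immediate from A2, and the other is obtained by applying A3 to the constant sequence $z^k=x$. Only note that your labels are swapped relative to the usual convention (for ``$x\in S(x)$ if and only if $x\in K$'', the ``if'' part is $x\in K\Rightarrow x\in S(x)$, which is the A2 direction), but this is purely cosmetic.
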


\begin{proof}
The ``if''  statement follows from A2. For the ``only if'' statement, take $x\in S(x)$, consider the constant
sequence $z^k=x$ for all $k\in\na$, which converges to $x$, and apply A3.
\end{proof}   

\Cref{pq1} implies that if $x\notin K$ then $x\notin S(x)$, which, in view of A2, indicates that
the set $S(x)$ separates indeed $x$ from $K$. The separating sets $S(x)$ will provide to the approximate projections that we are going to employ throughout the paper.

Several notions of separating operators have been introduced in the literature; see, \emph{e.g.}, \cite[Section 2.1.13]{Cegielski:2012} and references therein. Our definition is a point-to-set version of the separating operators in~\cite[Definition 2.1]{Cegielski:2010}. It encompasses not only hyperplane-based separators as the ones in the seminal work by Fukushima~\cite{Fukushima:1983}, considered next in \Cref{ex1}, but also more general situations. Indeed, in \Cref{ex2}, $S(x)$ is the Cartesian product of half-spaces, which is not a half-space.

For the family of convex sets in \Cref{ex1,ex2},  we get  both explicit separating operators complying with \cref{dd1} and closed formulas for projections onto them.

\begin{example}\label{ex1} Assume that $K=\{x\in\re^n: g(x)\le 0\}$, where $g:\re^n\to\re$ is convex. Define
\begin{equation}\label{aa3}
S(x)=\begin{cases} K, & {\rm if}\,\,x\in K\\
\{z\in\re^n:u^{\top}(z-x)+g(x)\leq 0\},& {\rm otherwise},
\end{cases} 
\end{equation}
where $u\in \partial g(x)$ is an arbitrary subgradient of $g$ at $x$.
\end{example}

We mention that any closed and convex set $K$
can be written as the $0$-sublevel set of a convex, and even smooth function $g$, for instance, $g(x)=\dist(x,K)^2$,
but in general this is not advantageous, because for this $g$ it holds that $\nabla g(x)=2(x-P_K(x))$, so
that $P_K(x)$, the exact projection of $x$ onto $K$,  is needed for computing the separating half-space, and nothing
has been won. The scheme is interesting when the function $g$ has easily computable gradient or subgradients.
For instance, in the quite frequent case in which $K=\{x\in\re^n: g_i(x)\le 0\,\,\,(1\le i\le \ell)\}$, where the $g_i$'s 
are convex and smooth, we can take $g(x)=\max_{1\le i\le\ell}g_i(x)$, and the subgradients of $g$ are easily obtained 
from the gradients of the $g_i$'s.

\begin{example}\label{ex2} Assume that $\mathbf{K}=K_1\times\dots\times K_m\subset \re^{nm}$, where $K_i\subset\re^n$
is of the form $K_i=
\{x\in\re^n: g_i(x)\le 0\}$ and $g_i:\re^n\to\re$ is convex for $1\le i\le m$. Write $x\in\re^{nm}$ as
$x=(x^1,\dots ,x^m)$ with $x^i\in\re^n (1\le i\le m)$.  We define the separating operator 
$\mathbf{S}:\re^{nm}\to{\cal P}(\re^{nm})$ as $\mathbf{S}(x)=S_1(x^1)\times\dots\times S_m(x^m)$, with
\begin{equation}\label{aa4}
S_i(x^i)=\begin{cases} K_i, &{\rm if}\,\,x^i\in K_i,\\
\{z\in\re^n:(u^i)^{\top}(z-x^i)+g_i(x^i)\leq 0\}, & {\rm otherwise},
\end{cases} 
\end{equation}
where $u^i\in \partial g_i(x^i)$ is an arbitrary subgradient of $g_i$ at $x^i$.
\end{example}
\Cref{ex2} is suited for the reduction of Simultaneous projection method (SiPM) for $m$ convex sets in $\re^n$ to MAP regarding two convex sets in $\re^{nm}$.  Note that in \Cref{ex1}, $S(x)$ is either $K$ or a half-space, and the same holds for the
sets $S_i(x^i)$ in \Cref{ex2}. We prove next that the separating operators $S$ and $\mathbf{S}$
defined in \Cref{ex1,ex2} satisfy assumptions A1--A3.

\begin{proposition} \label{pq2}
The separating operators $S$ and $\mathbf{S}$
defined in \Cref{ex1,ex2} satisfy assumptions {\rm A1--A3}.
\end{proposition}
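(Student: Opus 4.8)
The plan is to verify A1--A3 directly for the single-set operator $S$ of \Cref{ex1}, and then to deduce the corresponding properties of the product operator $\mathbf{S}$ of \Cref{ex2} coordinatewise. Two standing facts will be used throughout: since the feasibility problem is assumed solvable, $K$ and every $K_i$ are nonempty; and a real-valued convex function on $\re^n$ is continuous and locally Lipschitz, so its subgradients are uniformly bounded on bounded sets.

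For A1 and A2 of $S$ I would argue as follows. If $x\in K$ then $S(x)=K$, which is closed, convex, and contains $K$. If $x\notin K$, then $g(x)>0$, and the chosen subgradient $u\in\partial g(x)$ must be nonzero — otherwise $x$ would be a global minimizer of $g$, forcing $g\ge g(x)>0$ and hence $K=\emptyset$ — so $S(x)$ is a genuine closed half-space, thus closed and convex. For $K\subset S(x)$, take $y\in K$: the subgradient inequality gives $0\ge g(y)\ge g(x)+u^{\top}(y-x)$, i.e. $u^{\top}(y-x)+g(x)\le 0$, so $y\in S(x)$.

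The heart of the matter is A3 for $S$. Given $z^k\to z^*$ with $\dist(z^k,S(z^k))\to 0$, I would first record a distance formula: when $z^k\notin K$ one has $z^k\notin S(z^k)$ (since $(u^k)^{\top}(z^k-z^k)+g(z^k)=g(z^k)>0$, with $u^k\in\partial g(z^k)$ nonzero as above), and the elementary point-to-half-space distance formula gives $\dist(z^k,S(z^k))=g(z^k)/\norm{u^k}$; when $z^k\in K$ the inequality $\max\{g(z^k),0\}\le\norm{u^k}\,\dist(z^k,S(z^k))$ holds trivially, since both sides vanish, so it holds for all $k$. Since $\{z^k\}$ converges it is bounded, and local boundedness of the subdifferential yields $L$ with $\norm{u^k}\le L$ for all $k$; hence $0\le\max\{g(z^k),0\}\le L\,\dist(z^k,S(z^k))\to 0$, and continuity of $g$ forces $g(z^*)=\lim_k g(z^k)\le 0$, i.e. $z^*\in K$.

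Finally, for $\mathbf{S}$ the work is essentially bookkeeping. A1 follows because a finite product of closed convex sets is closed and convex and each factor $S_i(x^i)$ is such by the single-set case; A2 follows from the componentwise inclusions $K_i\subset S_i(x^i)$. For A3, writing $z^k=(z^{k,1},\dots,z^{k,m})$ and using that the projection onto a Cartesian product splits into the projections onto the factors, one gets $\dist(z^k,\mathbf{S}(z^k))^2=\sum_{i=1}^m\dist(z^{k,i},S_i(z^{k,i}))^2$, so every term tends to $0$ while $z^{k,i}\to z^{*,i}$; applying A3 of $S_i$ gives $z^{*,i}\in K_i$ for each $i$, hence $z^*\in\mathbf{K}$. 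I expect the only nonroutine step to be the uniform bound $\norm{u^k}\le L$ in the verification of A3, which rests on the local Lipschitz continuity (equivalently, local boundedness of subgradients) of a real-valued convex function on $\re^n$; the half-space distance formula, the subgradient inequality, and the product-space splitting are all routine.
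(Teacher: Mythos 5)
Your proof is correct and follows essentially the same route as the paper's: nonvanishing of the subgradient off $K$, closedness and convexity of the half-space for A1, the subgradient inequality for A2, the point-to-half-space distance formula plus local boundedness of $\partial g$ and continuity of $g$ for A3, and a coordinatewise reduction for $\mathbf{S}$. The only cosmetic difference is that you unify the cases $z^k\in K$ and $z^k\notin K$ in a single inequality where the paper splits into subsequences, which changes nothing of substance.
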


\begin{proof}
We start with $S$ as in \Cref{ex1}. First we observe that if $x\notin K$ then
all subgradient of $g$ at $x$ are nonzero: since $K$ is assumed nonempty, there exists points where $g$ is nonpositive,
so that $x$, which satisfies $g(x) >0$, cannot be a minimizer of $g$, and hence $0\notin\partial g(x)$, \emph{i.e.}, $u\neq 0$ for all 
$u\in\partial g(x)$. Regarding A1, $S(x)$ is either equal to $K$ or to a half-space, both of which are closed and convex. 

For
A2, obviously it holds for $x\in K$. If $x\notin K$, we take $z\in K$, and conclude, taking into account the fact that $z\in K$ and 
the subgradient inequality, that
$u^{\top}(x-z)+g(x)\le g(z)\le 0$, implying that $z\in S(x)$, in view of \eqref{aa3}. 

We deal now with A3. Take a sequence $\{z^k\}_{k\in\na}$
converging to some $z^*$ such that $\lim_{k\to\infty}\dist(z^k,S(z^k))$ $=0$. We must prove that $z^*\in K$.
If some subsequence of $\{z^k\}_{k\in\na}$ is contained in $K$ then
$z^*\in K$, because $K$ is closed. Otherwise, for large enough $k$, $S(z^k)$ is a half-space. 
It is well known, and easy to check, that the projection $P_H$ onto a half-space 
$H=\{y\in\re^n: a^{\top}y\le\alpha\}\subset\re^n$, with $a\in\re^n, \alpha\in\re$, is given by
\begin{equation}\label{ee1}
P_H(x)= x-\lV a\rV^{-2}\max\{0, \alpha-a^{\top}x\}a.
\end{equation}
Denote by $P^{S_k}$ the projection onto $S(z^k)$. By \eqref{ee1},
$P^{S_k}(z)=z- \lV u^k\rV^{-2}\max\{0, g(z)\}u^k$,  so that
\[\dist(z^k,S(z^k))=\lV z^k-P^{S_k}(z^k)\rV=\lV u^k\rV^{-1}\max\{0,g(z^k)\}.\] Note that $\{z^k\}_{k\in\na}$ is bounded, because it is convergent.
Since the subdifferential operator $\partial g$ is locally bounded in the interior of the domain of $g$, which here we take
as $\re^n$, there exists $\mu >0$ so that $\lV u^k\rV\le\mu$ for all $k$ and all $u^k\in\partial g(z^k)$. Hence,
$\dist(z^k,S(z^k))\ge\mu^{-1}\max\{0,g(z^k)\}\ge 0$. Since by assumption $\lim_{k\to\infty}\dist(z^k,S(z^k))=0$,  and $g$, being convex,
is continuous, we get that
$0=\lim_{k\to\infty}\mu^{-1}\max\{0,g(z^k)\}=\mu^{-1}\max\{0,g(z^*)\}$, implying that $0=\max\{0,g(z^*)\}$,\emph{i.e.}, $g(z^*)\le 0$,
so that $z^*\in K$ and A3 holds.

Now we consider $\mathbf{S}$ as in \Cref{ex2}. As before, if $x^i\notin K_i$ then $S_i(x^i)$ is indeed a half-space in $\re^n$.
Concerning A1--A3, 
A1 holds because $\mathbf{S}(x)$ is the Cartesian product of closed and convex sets
(either $K_i$ or a half-space in $\re^n$). For A2, take $(x^1, \dots, x^m)\in\mathbf{K}$. If $x^i\in K_i$, then $x^i\in S_i(z^i)=K_i$.
Otherwise, we take $z^i\in K_i$, and invoking again the subgradient inequality we get $(u^i)^{\top}(x^i-z^i)+g(x^i)\le g(z^i)\le 0$ 
implying that $z^i\in S_i(x^i)$, \emph{i.e.} $K_i\subset S_i(X^i)$ for all $i$, and the result follows taking into account the definitions
of $\mathbf{K}$ and $\mathbf{S}$. For A3, note that $\lim_{k\to\infty}\dist(z^k,\mathbf{S}(z^k))=0$ if and only if 
$\lim_{k\to\infty}\dist(z^{k,i},S_i(z^{k,i}))=0$ for $1\le i\le m$, where $z^k=(z^{k,1},\dots ,z^{k,m})$ with $z^{k,i}\in\re^n$.
Then, the result follows with the same argument as in \Cref{ex1}, with $z^{k, i}, S_i, g_i$ substituting for $z^k, S, g$.
\end{proof} 

\section{Convergence results for MAAP and CARM}\label{s2}       

We recall now the definitions of MAP and CRM, and introduce the formal definitions of MAAP and CARM.
Consider a closed convex set $K\subset\re^n$ and an affine manifold $U\subset\re^n$. We remind that an affine manifold is a set of the form $\{x\in\re^n: Qx=b\}$, for some $Q\in \re^{n\times n}$ and some $b\in \re^n$. 

Let $P_K, P_U$ be the projections onto $K,U$ respectively and define $R_K,R_U, T, C:\re^n\to\re^n$ as
\begin{equation}\label{e1}
\begin{gathered}
R_K=2P_K-\Id, \quad R_U=2P_U-\Id, \\
 T = P_U\circ P_K, \quad C(x)=\circum(x,R_K(x), R_U(R_K(x))),
\end{gathered}
\end{equation} 
where $\Id$ is the identity operator in $\re^n$
and $\circum(x,y,z)$ is the circumcenter of $x,y,z$, {\emph{i.e.}}, the point in the affine hull of $x,y,z$ equidistant to them.
We remark that $\circum(x,y,y)= (1/2)(x+y)$ (in this case the affine hull is the line through $x,y$), and $\circum(x,x,x)=x$
(the affine hull being the singleton $\{x\}$).

Then, starting from any $x^0\in\re^n$, MAP generates a sequence $\{x^k\}_{k\in\na}\subset\re^n$ according to 
\begin{equation}\label{aa5}
x^{k+1}=T(x^k),
\end{equation}
and, starting with $x^0\in U$, CRM generates a sequence $\{x^k\}_{k\in\na}\subset\re^n$ given by   
\begin{equation}\label{aa6}
x^{k+1}=C(x^k).
\end{equation}
 
For MAAP and CARM, we assume that $S:\re^n\to{\cal P}(\re^n)$ is a separating operator for $K$ satisfying A1--A3,
we take $P_U$ as before and define $P^S$ as the operator given by $P^S(x):=P_{S(x)}(x)$, where $P_{S(x)}$ is the 
projection onto $S(x)$.  

Take $R_U$ as in \eqref{e1}, and define $R^S, T^S, C^S:\re^n\to\re^n$ as
\begin{equation}\label{aa7}
\begin{gathered}
 T^S = P_U\circ P^S, \quad R^S=2P^S-\Id, \quad C^S(x)=\circum\left(x,R^S(x), R_U(R^S(x))\right).
\end{gathered}
\end{equation} 

Then, starting from any $x^0\in\re^n$, MAAP generates a sequence $\{x^k\}_{k\in\na}\subset\re^n$ according to 
\begin{equation}\label{aa8}
x^{k+1}=T^S(x^k),
\end{equation} 
and, starting with $x^0\in U$, CARM generates a sequence $\{x^k\}_{k\in\na}\subset\re^n$ given by   
\begin{equation}\label{aa9}
x^{k+1}=C^S(x^k).
\end{equation} 

We observe now that the ``trivial'' separating operator $S(x)=K$ for all $x\in\re^n$ satisfies A1-A3,
and that in this case we have $T^S=T, C^S=C$, so that MAP, CRM are particular instances of MAAP, CARM
respectively. Hence, the convergence analysis of the approximate algorithms encompasses the exact ones.
Global convergence of MAP is well known (see, \emph{e.g.} \cite{Cheney:1959}) and the corresponding result for CRM has
been established in \cite{Behling:2020}. The following propositions follow quite closely the corresponding
results for the exact algorithms, the difference consisting in the replacement of the set $K$ by the
separating set $S(x)$. However, some care is needed, because $K$ is fixed, while $S(x)$ changes along the algorithm,
so that we present the complete analysis for the approximate algorithms MAAP and CARM.

\begin{proposition}\label{p2}
For all $z\in K\cap U$ and all $x\in\re^n$, it holds that
\begin{equation}\label{aa10}
\lV T^S(x)-z\rV ^2\le \lV z-x\rV^2-\lV T^S(x)-P^S(x)\rV^2-\lV P^S(x)-x\rV^2,
\end{equation} 
with $T^S$ as in \eqref{aa7}. 
\end{proposition}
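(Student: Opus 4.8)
The plan is to decompose the single MAAP step $x \mapsto T^S(x) = P_U(P^S(x))$ into its two constituent projections and apply the elementary projection inequality to each. Fix $z \in K\cap U$ and $x\in\re^n$, and write $y \coloneqq P^S(x)$, so that $T^S(x) = P_U(y)$.

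First I would treat the projection onto the separating set. By A2 we have $K \subseteq S(x)$, hence $z \in S(x)$; by A1 the set $S(x)$ is closed and convex, so $P^S(x) = P_{S(x)}(x)$ is the well-defined projection of $x$ onto $S(x)$, and the obtuse-angle characterization of projections onto closed convex sets yields $\scal{x-y}{z-y} \le 0$. Expanding $\norm{x-z}^2 = \norm{x-y}^2 + 2\scal{x-y}{y-z} + \norm{y-z}^2$ and discarding the nonnegative middle term gives
\[ \norm{y-z}^2 \le \norm{x-z}^2 - \norm{x-y}^2. \]

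Next I would treat the projection onto $U$. Since $z \in U$ and $U$ is convex, the same projection inequality, now applied to $P_U$ at the point $y$, gives $\norm{P_U(y)-z}^2 \le \norm{y-z}^2 - \norm{y-P_U(y)}^2$; recalling $T^S(x) = P_U(y)$, this reads
\[ \norm{T^S(x)-z}^2 \le \norm{y-z}^2 - \norm{y-T^S(x)}^2. \]
(Since $U$ is in fact an affine manifold one even has equality here, but the inequality is all we need.) Substituting the first display into this one and recalling $y = P^S(x)$ yields
\[ \norm{T^S(x)-z}^2 \le \norm{x-z}^2 - \norm{x-P^S(x)}^2 - \norm{P^S(x)-T^S(x)}^2, \]
which is precisely \eqref{aa10}.

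I do not expect a genuine obstacle: the whole argument is the standard two-step Fej\'er inequality for alternating projections, the only subtlety being that one must invoke A1--A2 (rather than any property of $K$ itself) to know that $z$ lies in the \emph{current} set $S(x)$ and that $P^S(x)$ is a bona fide projection onto a closed convex set; once that is noted, everything goes through verbatim as in the exact case, with $K$ replaced by $S(x)$.
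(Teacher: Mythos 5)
Your proposal is correct and follows essentially the same route as the paper: the paper likewise applies the firm-nonexpansiveness inequality $\lV P_M(x)-y\rV^2\le\lV x-y\rV^2-\lV P_M(x)-x\rV^2$ consecutively with $M=S(x)$ and $M=U$, noting that $z\in K\subset S(x)$ by A2 and $z\in U$. Your derivation of that inequality from the obtuse-angle characterization is just a self-contained proof of the same fact the paper cites.
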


\begin{proof}
The projection operator $P_M$ onto any closed and convex set $M$ is known to be firmly nonexpansive \cite[Proposition 4.16]{Bauschke:2017}, that is, 
% is known to be 
%   A well known and elementary property of projections states that 
% for any closed and convex set $M\subset\re^n$
% \begin{equation}\label{e3}
% (x-P_M(x))^{\top}(y-P_M(x))\le 0 
% \end{equation} 
% for all $x\in\re^n$ and all $y\in M$, where $P_M$ is the projection onto $M$. As an elementary consequence of \eqref{e3},
% we have 
\begin{equation}\label{aa11}
\lV P_M(x)-y\rV^2\le\lV x-y\rV^2-\lV P_M(x)-x\rV^2
\end{equation} 
for all $x\in\re^n$ and all $y\in M$.

Applying consecutively \eqref{aa11} with $M=U$ and $M=S(x)$, and noting that for $z\in K\cap U$, we get $z\in U$ and also $z\in K\subset S(x)$
(due to Assumption A2), we obtain \eqref{aa10}.
\end{proof}

A similar result for operator $C^S$ is more delicate due to the presence of the reflections and the circumcenter and requires some intermediate results. We follow closely the analysis for operator $C$ presented in \cite{Behling:2020}.

The crux of the convergence analysis of CRM, performed in \cite{Behling:2020}, is the remarkable observation  that for $x\in U\setminus K$, $C(x)$ is indeed the projection of $x$ onto a half-space $H(x)$ separating $x$ from $K\cap U$. Next, we extend this result to $C^S$.

\begin{proposition}\label{p4}
Let $U,H\subset\re^n$ be an affine manifold and a subspace respectively, such that $H\cap U\ne\emptyset$. 
Denote as $P_H, R_H:\re^n\to\re^n$ the projection and the reflection with respect to $H$, respectively. Then, 
\begin{listi}
\item$P_{H\cap U}(x)=\circum(x,R_U(x), R_H(R_U(x))$ for all $x\in U$,
\item $\circum(x,R_U(x), R_H(R_U(x))\in U$ for all $x\in U$.
\end{listi}
\end{proposition}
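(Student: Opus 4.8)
The plan is to prove both items together by an explicit computation in the affine manifold $U$, reducing everything to the case of two linear subspaces. First I would exploit the fact that both the projection $P_{H\cap U}$ and the circumcenter construction are affinely equivariant, so after translating by a fixed point $p\in H\cap U$ we may assume $p=0$, in which case $U$ becomes a linear subspace, $H$ is already a subspace, $R_U$ and $R_H$ are linear, and $H\cap U$ is a subspace. Thus it suffices to prove: for subspaces $H,U$ with $H\cap U\neq\emptyset$ (automatic now, since $0$ is in both) and any $x\in U$, the circumcenter $c\coloneqq\circum(x,R_U(x),R_H(R_U(x)))$ lies in $U$ and equals $P_{H\cap U}(x)$.

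The key observation is that for $x\in U$ one has $R_U(x)=x$, so the three points whose circumcenter we take are in fact $x$, $x$, and $R_H(x)$. By the convention recalled in the paragraph after \eqref{e1}, $\circum(x,x,R_H(x))=\tfrac12(x+R_H(x))=P_H(x)$. Hence the claimed identity collapses to the two assertions $P_H(x)\in U$ and $P_H(x)=P_{H\cap U}(x)$ for every $x\in U$. For the first, write $x=P_{H\cap U}(x)+w$ where $w\in U$ and $w\perp H\cap U$; since $H\cap U$ is a subspace of $U$, the complement of $H\cap U$ inside $U$ is itself a subspace, and one checks $w$ is orthogonal to all of $H\cap U$. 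The cleanest route is: $P_H(x)-x = P_H(x)-x \in H^\perp$, and I would show this vector actually lies in $U$. Indeed decompose $\re^n$ and use that $x\in U$; the subtle point is that $H^\perp\cap U^{\per\perp}$-type reasoning is not automatic, so instead I would argue directly that $P_H(x)$ is characterized by $P_H(x)\in H$ and $x-P_H(x)\perp H$, and then verify $P_H(x)\in U$ by showing $x-P_H(x)\in U$: since $x\in U$ it is enough that $P_H(x)\in U$, which is circular — so the genuine argument is that $P_{H\cap U}(x)\in H$, $P_{H\cap U}(x)\in U$, and $x-P_{H\cap U}(x)\perp H\cap U$; one then shows $x-P_{H\cap U}(x)\perp H$ using $x\in U$, namely $x-P_{H\cap U}(x)\in U\cap (H\cap U)^\perp$, and this space is orthogonal to $H$ because any $h\in H$ decomposes relative to $H\cap U$ and its orthogonal complement within $H$...

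Rather than that, I would take the slicker path: for $x\in U$, $P_{H\cap U}(x)$ already lies in $H$ and in $U$, and $x-P_{H\cap U}(x)$ lies in $(H\cap U)^{\perp}$ and, being a difference of two points of $U$, lies in the linear subspace $U-U=U$ (using $0\in U$). So $x-P_{H\cap U}(x)\in U\cap (H\cap U)^{\perp}$. The point now is that $U\cap(H\cap U)^{\perp}$ is orthogonal to $H$: take $h\in H$ and $v\in U\cap(H\cap U)^{\perp}$; write $h=h_0+h_1$ with $h_0\in H\cap U$ and $h_1\in H\cap(H\cap U)^{\perp}$; then $\scal{v}{h}=\scal{v}{h_0}+\scal{v}{h_1}=0+\scal{v}{h_1}$, and $h_1\in(H\cap U)^{\perp}$ while... this still needs $v\perp h_1$, which is not evident. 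So the honest statement I will actually prove is the elementary linear-algebra fact that for subspaces $U,H$ and $x\in U$ one has $P_H(x)=P_{H\cap U}(x)$ — equivalently $P_H$ restricted to $U$ equals $P_{H\cap U}$. This is \emph{not} true for general subspaces (it fails already in $\re^2$), which signals that $H$ being a subspace containing a point of $U$ is not enough and I have mislocated the hypothesis; the statement as written must be using that $H$ is in fact of a special form, or that the circumcenter of three \emph{possibly collinear} points is being used with the stated degenerate conventions in a way that makes the identity hold trivially after the $R_U(x)=x$ reduction.

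Given this, the main obstacle — and the step I would slow down on — is exactly the verification of item (i): after reducing to subspaces and using $R_U(x)=x$ to get $\circum(x,x,R_H(x))=P_H(x)$, one must reconcile $P_H(x)$ with $P_{H\cap U}(x)$ for $x\in U$. I expect the resolution to be that $U$ here is not arbitrary but carries additional structure inherited from Pierra's reformulation, or that the intended reading keeps $R_U$ in place without simplification because the three points are taken in the order producing a genuine (non-degenerate) circle whose center computation, carried out via the linear system ``$c\in\aff$ of the three points, $\|c-x\|=\|c-R_U(x)\|=\|c-R_H(R_U(x))\|$'', yields after expansion the two scalar conditions $\scal{c}{x-R_U(x)}=0$-type and $\scal{c}{R_U(x)-R_H(R_U(x))}=0$-type equations that force $c\in U$ and $c\in H$ simultaneously, hence $c\in H\cap U$, and then minimality/the affine-hull constraint pins down $c=P_{H\cap U}(x)$. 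So the plan in order is: (1) translate to make $U,H$ subspaces; (2) expand the three defining circumcenter equations into linear equations in $c$; (3) read off that $c$ is orthogonal to the two difference vectors, deducing $c\in U\cap H$; (4) use the affine-hull constraint $c\in\aff\{x,R_U(x),R_H(R_U(x))\}\subset U$ together with $c\perp x-c$ to identify $c=P_{H\cap U}(x)$; and (5) translate back. The delicate bookkeeping is in step (3), making sure the span of the two difference vectors is exactly large enough to cut $\re^n$ down to $H\cap U$ in the relevant directions.
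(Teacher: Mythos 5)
Your instinct that something is wrong with the statement as printed is correct, but your write-up does not end with a proof of either the literal statement or the intended one, so there is a genuine gap. Note first that the paper gives no argument here: its proof of \Cref{p4} is a citation to Lemma~3 of \cite{Behling:2020}. What that lemma asserts, and what the only application of \Cref{p4} (namely \Cref{l1}) actually uses, is $P_{H\cap U}(x)=\circum(x,R_H(x),R_U(R_H(x)))$ for $x\in U$, with $H$ a \emph{half-space}: the displayed formula in \Cref{p4} has the two reflections in the wrong order, and ``subspace'' should read ``half-space''. Your computation that, taken literally, $x\in U$ forces $R_U(x)=x$, so the three points degenerate to $x,x,R_H(x)$, the circumcenter collapses to $P_H(x)$, and $P_H(x)\ne P_{H\cap U}(x)$ already for two lines in $\re^2$, is exactly right and is the strongest part of your proposal: it correctly identifies the misprint.

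The gap is that your fallback plan (steps (1)--(5)) is still built on the uncorrected formula. You propose to ``keep $R_U$ in place without simplification,'' but $R_U(x)=x$ is an identity for $x\in U$, not a simplification you may withhold; consequently in your step (3) the equidistance system contains only one nontrivial equation and cannot force $c$ into both $U$ and $H$. With the corrected order the outline you sketch does become the known proof (see \cite{Behling:2018,Behling:2020}): for $x\in U\setminus H$, the condition $\lV c-x\rV=\lV c-R_H(x)\rV$ places $c$ on the bounding hyperplane of the half-space $H$ (the perpendicular bisector of $x$ and its mirror image), the condition $\lV c-R_H(x)\rV=\lV c-R_U(R_H(x))\rV$ places $c$ on a hyperplane containing $U$, and intersecting these with the affine hull of the three points (which contains $x\in U$) pins $c$ down; one then checks that $P_{H\cap U}(x)$ satisfies all three conditions, so it equals $c$ by uniqueness, giving both items. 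As submitted, however, your proposal neither repairs the statement nor completes step (3), so it cannot be accepted as a proof.
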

\begin{proof}
See Lemma 3 in \cite{Behling:2020}.
\end{proof}
 
\Cref{p4} means that when the sets in CFP are an affine manifold and a hyperplane, 
CRM indeed converges in one step, which is a first indication of its superiority over MAP, which certainly 
does not enjoy this one-step
convergence property, but also points to the main weakness of CRM, namely that for its convergence we may
replace $H$ by a general closed and convex set, but the other set must be kept as an affine manifold.
 
\begin{lemma}\label{l1} 
Define $H(x)\subset\re^n$ as
\begin{equation}\label{aa13}
H(x):=\begin{cases} K,& {\rm if}\,\,x\in K\\
\left\{z\in\re^n:(z-P^S(x))^{\top}(x-P^S(x))\le 0\right\},& {\rm otherwise}.
\end{cases}
\end{equation}
Then, for all $x\in U, C^S(x)=P_{H(x)\cap U}(x)$.
\end{lemma}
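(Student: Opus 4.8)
The plan is to split into the two cases dictated by the definition of $H(x)$ in \eqref{aa13}, and in the nontrivial case to reduce $C^S(x)$ to a circumcenter involving a reflection with respect to a genuine hyperplane, so that \Cref{p4} applies directly. Fix $x\in U$. If $x\in K$, then by \Cref{pq1} we have $x\in S(x)$, hence $P^S(x)=x$, so $R^S(x)=x$ and $R_U(R^S(x))=R_U(x)=x$ (since $x\in U$ and $U$ is affine, $P_U(x)=x$). Therefore $C^S(x)=\circum(x,x,x)=x$; on the other hand $x\in K\cap U = H(x)\cap U$ so $P_{H(x)\cap U}(x)=x$ as well, and the two agree.

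The main work is the case $x\in U\setminus K$. Here $x\notin S(x)$ by \Cref{pq1}, so $P^S(x)\ne x$ and the half-space $H(x)=\{z:(z-P^S(x))^{\top}(x-P^S(x))\le 0\}$ is well defined with nonempty interior. The first key observation is that $P^S(x)=P_{H(x)}(x)$: indeed $H(x)$ is exactly the half-space supporting $S(x)$ at $P^S(x)$ with outward normal $x-P^S(x)$ (this is the standard characterization of the projection onto a convex set: $z\in S(x)$ implies $(z-P^S(x))^{\top}(x-P^S(x))\le 0$, so $S(x)\subset H(x)$, while $P^S(x)\in\bound H(x)$), and projecting $x$ onto a half-space through its boundary point nearest to $x$ returns that same point. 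Consequently $R^S(x)=2P^S(x)-x=2P_{H(x)}(x)-x=R_{H(x)}(x)$, i.e. the approximate reflection of $x$ coincides with the exact reflection of $x$ across the hyperplane $\bound H(x)$. Plugging this into \eqref{aa7} gives $C^S(x)=\circum\bigl(x,R_{H(x)}(x),R_U(R_{H(x)}(x))\bigr)$.

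Now I would like to invoke \Cref{p4} with the hyperplane $\bound H(x)$ in place of $H$. One technical point needs care: \Cref{p4} is stated for a linear \emph{subspace} $H$ (so that $P_H,R_H$ are linear), whereas $\bound H(x)$ is in general only an affine hyperplane, and moreover \Cref{p4} takes the circumcenter in the order $\circum(x,R_U(x),R_H(R_U(x)))$ while CARM uses $\circum(x,R_H(x),R_U(R_H(x)))$ — but the circumcenter is symmetric in its three arguments, so this ordering is immaterial. For the affine-versus-linear issue, one translates by a point $p\in \bound H(x)\cap U$ (nonempty because $K\cap U\ne\emptyset$ and $K\subset S(x)\subset H(x)$, so $\bound H(x)\cap U\ne\emptyset$ once one checks $\bound H(x)\cap U$ meets the intersection — in fact $H(x)\cap U\supset K\cap U\ne\emptyset$ suffices together with the fact that $x\in U\setminus H(x)$ forces the separating hyperplane to cross $U$): reflections and projections are equivariant under translation, so the identity $P_{H(x)\cap U}(x)=\circum(x,R_U(x),R_{\bound H(x)}(R_U(x)))$ follows from \Cref{p4} applied to the translated subspaces, and likewise part (ii) gives $C^S(x)\in U$. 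The hard part will be assembling this equivariance argument cleanly and verifying the nonemptiness $\bound H(x)\cap U\ne\emptyset$; once that is in place, combining with the case $x\in K$ finishes the proof, since $R_{H(x)}=R_{\bound H(x)}$ on all of $\re^n$ (reflection across a half-space's boundary hyperplane).
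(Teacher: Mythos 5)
Your proof follows the same route as the paper's: split on $x\in K$ versus $x\in U\setminus K$, show in the latter case that $P_{H(x)}(x)=P^S(x)$ (hence $R^S(x)=R_{H(x)}(x)$), and then invoke \Cref{p4}; this is exactly the paper's argument, with only a cosmetic difference in how $P_{H(x)}(x)=P^S(x)$ is justified (you use the variational characterization of the projection, the paper computes directly from \eqref{ee1}). One side-remark of yours is, however, a non-argument: the symmetry of the circumcenter in its three arguments does \emph{not} dispose of the mismatch between $\circum(x,R_U(x),R_H(R_U(x)))$ as printed in \Cref{p4} and $\circum(x,R_H(x),R_U(R_H(x)))$ as used in \eqref{aa7}, because these are two genuinely different triples of points, not permutations of one another; indeed, for $x\in U$ the first collapses to $\circum(x,x,R_H(x))=P_H(x)$, which is not $P_{H\cap U}(x)$ in general. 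The actual resolution is that \Cref{p4} is a transcription of Lemma~3 of the cited reference, where $H$ is a \emph{half-space} and the composition is in the CRM order $\circum(x,R_H(x),R_U(R_H(x)))$; read that way it applies verbatim to $H(x)$ (with $H(x)\cap U\supset K\cap U\ne\emptyset$), and your translation-to-a-linear-subspace detour and the passage to the boundary hyperplane become unnecessary. (Your closing claim that $R_{H(x)}=R_{\bound H(x)}$ on all of $\re^n$ is also false --- the two reflections differ on the interior of $H(x)$ --- but it plays no role in the argument.) In substance the proof is correct and coincides with the paper's.
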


\begin{proof}
Take $x\in U$. If $x\in K$, then $x\in S(x)$ by A2, and it follows that $R_U(x)=R^S(x)=x$, so that
$C^S(x)=$ $\circum(x,x,x)=x$. Also, $P_{H(x)}(x)=P_K(x)=x$ by \eqref{aa13},
and the result holds. Assume that $x\in U\setminus K$, so that $H(x)$ is the half-space in \eqref{aa13}. 

In view of \eqref{aa13}, we get, using \eqref{ee1} with $a=x-P^S(x), \alpha=(x-P^S(x))^{\top}P^S(x)$, that
$P_{H(x)}(x)=P^S(x)$. It follows from the definition of the reflection operator $R^S$ that
\begin{equation}\label{aa14}
R^S(x)=R_{H(x)}(x).
\end{equation} 
Now, by \eqref{aa7} and \eqref{aa14}, 
$$C^S(x)=\circum(x,R^S(x), R_U(R^S(x)))=\circum(x, R_{H(x)}, R_U(R_{H(x)}(x))).$$ 
Since $U$ is an affine manifold and $H(x)$ is a half-space, we can apply \Cref{p4} and conclude that
$C^S(x)=P_{H(x)\cap U}(x)$, proving the last statement of the lemma. 
By assumption, $x\in U$, so that $P_{H(x)\cap U}(x)=P_{H(x)}(x)$, establishing the result.
\end{proof}

This rewriting of the operator $C^S$ as a projection onto a half-space (which varies with the argument of $C^S$),
allows us to obtain the result for CARM analogous to \Cref{p2}. 
 
\begin{proposition}\label{p5}
For all $z\in K\cap U$ and all $x\in U$, it holds that
\begin{listi}
\item
$\label{aa15}
\lV C^S(x)-z\rV ^2\le \lV z-x\rV^2-\lV C^S(x)-x\rV^2,
$
with $C^S$ as in \eqref{aa7}.
\item $C^S(x)\in U$ for all $x\in U$. 
\end{listi}
\end{proposition}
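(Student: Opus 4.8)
The plan is to reduce everything to \Cref{l1}, which already identifies $C^S(x)$ with the metric projection $P_{H(x)\cap U}(x)$ of $x$ onto the intersection of the (half-space or $K$) set $H(x)$ with the affine manifold $U$, and then to invoke firm nonexpansiveness of projections exactly in the spirit of the proof of \Cref{p2}. Item (ii) is then essentially immediate: since $H(x)\cap U\subseteq U$, \Cref{l1} gives $C^S(x)=P_{H(x)\cap U}(x)\in H(x)\cap U\subseteq U$ for every $x\in U$; alternatively, in the case $x\in U\setminus K$ one may apply \Cref{p4}(ii) with $H=H(x)$.

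For item (i), the crucial preliminary observation is that every $z\in K\cap U$ in fact belongs to $H(x)\cap U$. Membership $z\in U$ is trivial. For $z\in H(x)$ I would split into the two cases of \eqref{aa13}: if $x\in K$ then $H(x)=K$, so $z\in K\cap U\subseteq K=H(x)$; if $x\notin K$ then $H(x)$ is the half-space $\{w:(w-P^S(x))^{\top}(x-P^S(x))\le 0\}$, and since $z\in K\subseteq S(x)$ by Assumption~A2, the variational (obtuse-angle) characterization of the projection $P^S(x)=P_{S(x)}(x)$ applied with the feasible point $z\in S(x)$ yields $(z-P^S(x))^{\top}(x-P^S(x))\le 0$, i.e. $z\in H(x)$. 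Hence $z\in H(x)\cap U$ in all cases.

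With $z\in H(x)\cap U$ established, I would apply the firm nonexpansiveness inequality \eqref{aa11} with the closed convex set $M=H(x)\cap U$ and the point $y=z\in M$, using $P_M(x)=C^S(x)$ from \Cref{l1}; this gives precisely $\lV C^S(x)-z\rV^2\le\lV x-z\rV^2-\lV C^S(x)-x\rV^2$, which is item (i). The only nontrivial step is the verification that $z\in H(x)$ when $x\notin K$, and even that is a one-line consequence of A2 together with the standard characterization of projections onto convex sets; the rest is bookkeeping. The point to be careful about is matching the sign convention in the definition \eqref{aa13} of $H(x)$ with the obtuse-angle inequality, and making sure that characterization is invoked for $S(x)$ rather than for $K$.
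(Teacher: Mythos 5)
Your proposal is correct and follows essentially the same route as the paper: reduce to \Cref{l1} and apply the firm nonexpansiveness inequality \eqref{aa11} to the projection identified there (the paper takes $M=H(x)$ using $C^S(x)=P_{H(x)}(x)$, while you take $M=H(x)\cap U$, an immaterial difference). Your explicit verification that $z\in H(x)$ via Assumption A2 and the obtuse-angle characterization of $P_{S(x)}$ is exactly the justification the paper leaves implicit in its assertion $K\subset H(x)$.
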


\begin{proof}
For (i), take $z\in K\cap U$ and $x\in U$. By \Cref{l1}, $C^S(x)=P_{H(x)}(x)$ for all $x\in U$. 
Since $z\in K\subset H(x)$, we can apply \eqref{aa11} with $M=H(x)$, obtaining
$\lV P_{H(x)}(x)-z\rV^2\le\lV x-z\rV^2-\lV P_{H(x)}(x)-x\rV^2$, which gives the result, 
invoking again \Cref{l1}.  Item (ii) follows from \Cref{p4} and \Cref{l1}. 
\end{proof}

\Cref{p2,p5} allow us to prove convergence of the MAAP and CARM sequences respectively,
using the well known Fejér monotonicity argument.

\begin{theorem}\label{t1}
Consider a closed and convex set $K\subset\re^n$ and an affine manifold $U\subset\re^n$ such that $K\cap U\neq \emptyset.$ Consider also a separating operator $S$
for $K$ satisfying Assumptions A1--A3. 
Then the sequences generated by either MAAP or CARM, starting from any initial point in the MAAP case, and from
a point in $U$ in the CARM case, are well defined, contained in $U$, Fejér monotone with respect to $K\cap U$, 
convergent, and 
their limits belong to $K\cap U$, {\emph{i.e.}}, they solve
CFP. 
\end{theorem}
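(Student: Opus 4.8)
The plan is to treat the MAAP and CARM sequences in parallel, letting \Cref{p2} play for MAAP the role that \Cref{p5} plays for CARM, and then to feed the resulting inequalities into the Fej\'er-monotonicity toolbox of \Cref{p1} together with Assumption A3.

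First I would dispose of well-definedness and the inclusion in $U$. For MAAP, $P^S(x)=P_{S(x)}(x)$ is well defined since $S(x)$ is nonempty (it contains $K\ne\emptyset$ by A2), closed and convex (A1), and $P_U$ is well defined since $U$ is a nonempty affine manifold; hence $T^S=P_U\circ P^S$ is defined on all of $\re^n$ and $x^{k}=T^S(x^{k-1})\in U$ for every $k\ge 1$ (for $k=0$ this is the hypothesis, if imposed). For CARM, starting from $x^0\in U$, \Cref{l1} shows that for $x\in U$ the circumcenter defining $C^S(x)$ exists and equals $P_{H(x)\cap U}(x)$, while \Cref{p5}(ii) gives $C^S(x)\in U$; an induction then yields that the CARM iteration is well defined with $x^k\in U$ for all $k$.

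Next I would establish Fej\'er monotonicity with respect to $K\cap U$, which is nonempty by hypothesis. Fixing $z\in K\cap U$, \Cref{p2} gives for MAAP
\[
\lV x^{k+1}-z\rV^2\le\lV x^k-z\rV^2-\lV P^S(x^k)-x^k\rV^2\le\lV x^k-z\rV^2,
\]
while for CARM \Cref{p5}(i) gives
\[
\lV x^{k+1}-z\rV^2\le\lV x^k-z\rV^2-\lV x^{k+1}-x^k\rV^2\le\lV x^k-z\rV^2 .
\]
So in both cases $\{x^k\}_{k\in\na}$ is Fej\'er monotone with respect to $K\cap U$, hence bounded with a cluster point by \Cref{p1}(i). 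Telescoping the displayed inequalities gives $\sum_k\lV P^S(x^k)-x^k\rV^2<\infty$ for MAAP and $\sum_k\lV x^{k+1}-x^k\rV^2<\infty$ for CARM. In the MAAP case this is already $\dist(x^k,S(x^k))=\lV x^k-P^S(x^k)\rV\to 0$. In the CARM case I would use \Cref{l1}: since $P_{H(x)}(x)=P^S(x)$ we have $\dist(x^k,S(x^k))=\dist(x^k,H(x^k))$, and since $x^{k+1}=P_{H(x^k)\cap U}(x^k)\in H(x^k)$ we get $\dist(x^k,S(x^k))\le\lV x^k-x^{k+1}\rV\to 0$.

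Finally, let $\bar z$ be a cluster point, say $x^{k_j}\to\bar z$. As $U$ is closed and the iterates lie in $U$, $\bar z\in U$; and since $\dist(x^{k_j},S(x^{k_j}))\to 0$, Assumption A3 yields $\bar z\in K$, so $\bar z\in K\cap U$. Then \Cref{p1}(ii) forces the entire sequence to converge to $\bar z\in K\cap U$, \emph{i.e.}, the limit solves CFP. The only step I expect to need genuine care is the CARM reduction of $\lV x^{k+1}-x^k\rV\to 0$ to $\dist(x^k,S(x^k))\to 0$; once \Cref{l1} realizes $C^S$ as a half-space projection, this is the elementary remark that $x^{k+1}$ lies in $H(x^k)$, the very half-space whose distance from $x^k$ equals $\dist(x^k,S(x^k))$.
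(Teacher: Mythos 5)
Your proof is correct, and its overall architecture coincides with the paper's: well-definedness and the inclusion in $U$ from \Cref{l1} and \Cref{p5}(ii), Fej\'er monotonicity from \Cref{p2} (MAAP) and \Cref{p5}(i) (CARM), vanishing of $\dist(x^k,S(x^k))$, Assumption A3 applied along a convergent subsequence, and \Cref{p1}(ii) to upgrade cluster-point convergence to convergence of the whole sequence. The one step where you genuinely diverge is the CARM passage from $\lV x^{k+1}-x^k\rV\to 0$ to $\dist(x^k,S(x^k))\to 0$. The paper goes through the reflection: by the equidistance property of the circumcenter, $\lV x^{k+1}-x^k\rV=\lV x^{k+1}-R^S(x^k)\rV$, hence $\lV x^{k+1}-P^S(x^k)\rV\to 0$ (as $P^S(x^k)$ is the midpoint of $x^k$ and $R^S(x^k)$), and then a triangle inequality gives $\lV x^k-P^S(x^k)\rV\to 0$. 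You instead exploit \Cref{l1} once more: since $x^{k+1}=P_{H(x^k)\cap U}(x^k)$ lies in $H(x^k)$ and $\dist(x^k,H(x^k))=\lV x^k-P^S(x^k)\rV=\dist(x^k,S(x^k))$, the distance is directly dominated by $\lV x^{k+1}-x^k\rV$. Both arguments are sound; yours is a one-line geometric observation that avoids reintroducing $R^S$, at the cost of leaning once more on the half-space realization of $C^S$. Two cosmetic remarks: your derivation of $\bar z\in U$ from closedness of $U$ is simpler than the paper's route via $\lV P_U(P^S(x^k))-P^S(x^k)\rV\to 0$ and continuity of $P_U$, and you are right to flag that for MAAP only the iterates with $k\ge 1$ are guaranteed to lie in $U$ when $x^0$ is arbitrary -- the paper's statement glosses over the same point.
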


\begin{proof}
Let first $\{x^k\}_{k\in\na}$ be the sequence generated by MAAP, {\emph{i.e.}} $x^{k+1}=T^S(x^k)$. Take any $z\in K\cap U$.
Then, by \Cref{p2},
\begin{align}
\lV x^{k+1}-z\rV^2 & \le\lV x^k-z\rV^2-\lV P_U(P^S(x^k))-P^S(x^k)\rV^2-\lV P^S(x^k)-x^k\rV^2 \\ &\le \lV x^k-z\rV^2,\label{aa16}
\end{align}
and so $\{x^k\}_{k\in\na}$ is Fejér monotone with respect to $K \cap U$. By the Definition of $T^S$ in \eqref{aa7},
$\{x^k\}_{k\in\na}\subset U$. By \Cref{p1}(i), $\{x^k\}_{k\in\na}$ is bounded.
Also, $\{\lV x^k-z\rV\}_{k\in\na}$ is nonincreasing and nonnegative, therefore convergent, and thus the difference between
consecutive iterates converges to 0. Hence, rewriting \eqref{aa16} as
\[
\lV P_U(P^S(x^k))-P^S(x^k)\rV^2+\lV P^S(x^k)-x^k\rV^2\le\lV x^k-z\rV^2-\lV x^{k+1}-z\rV^2,
\]
we conclude that 
\begin{equation}\label{aa17}
\lim_{k\to\infty}\lV P_U(P^S(x^k))-P^S(x^k)\rV^2=0,
\end{equation}
and
\begin{equation}\label{aa18}
\lim_{k\to\infty}\lV P^S(x^k)-x^k\rV^2=0.
\end{equation}
Let  $\bar x$ be a cluster point of $\{x^k\}_{k\in\na}$ and $\{x^{j_k}\}_{j_k\in\na}$ a subsequence of $\{x^k\}_{k\in\na}$ convergent to $\bar x$.
By \eqref{aa18}, $\lim_{k\to\infty}\dist(x^{j_k},S(x^{j_k}))=0$. By Assumption A3 on the separating operator $S$,
$\bar x\in K$. It follows also from \eqref{aa18} that $\lim_{k\to\infty}P^S(x^{j_k})=\bar x$. By \eqref{aa17} and
continuity of $P_U$, $P_U(\bar x)=\bar x$, so that $\bar x\in U$ and therefore $\bar x\in K\cap U$. By \Cref{p1}(ii), $\bar x=\lim_{k\to\infty}x^k$, completing the proof for the case of MAAP.

Let now $\{ x^k\}_{k\in\na}$ be the sequence generated by CARM with $x^0\in U$. By \Cref{l1}, whenever $x^k\in U$,
$x^{k+1}$ is the projection onto a closed and convex set, namely $H(x^k)$, and hence it is well defined.
Since $x^0\in U$ by assumption, the whole sequence is well defined, and using recursively \Cref{p5}(ii),
we have that $\{x^k\}_{k\in\na}\subset U$. Now we use \Cref{p4}, obtaining, for any
$z\in K\cap U$,    
$$
\lV x^{k+1}-z\rV^2\le\lV x^k-z\rV^2-\lV C^S(x^k)-x^k\rV^2\le \lV x^k-z\rV^2,
$$
so that again $\{ x^k\}_{k\in\na}$ is Fejér monotone with respect $K\cap U$, and henceforth bounded. Also, with the same argument as before,
we get 
\begin{equation}\label{aa19}
\lim_{k\to\infty}\lV x^{k+1}-x^k\rV=\lim_{k\to\infty} \lV C^S(x^k)-x^k\rV=0. 
\end{equation}
In view of \eqref{aa19} and the definition of circumcenter, $\lV x^{k+1}-x^k\rV=\lV x^{k+1}-R^S(x^k)\rV$, so
that $\lim_{k\to\infty}\lV x^{k+1} -R^S(x^k)\rV =0$ implying that $\lim_{k\to\infty}\lV x^{k+1}-P^S(x^k)\rV =0$.
Thus, since $\lV x^k-P^S(x^k)\rV\le \lV x^k-x^{k+1}\rV+\lV x^{k+1}-P^S(x^k)\rV$, we get that
\begin{equation} \label{aa20}
0=\lim_{k\to\infty}\lV x^k-P^S(x^k)\rV =\lim_{k\to\infty}\dist(x^k,S(x^k)).
\end{equation}
Let $\bar x$ be any cluster point of $\{x^k\}_{k\in\na}$. Looking at \eqref{aa20} along a subsequence of $\{x^k\}_{k\in\na}$ converging to $\bar x$,
and invoking Assumption A3 of the separating operator $S$, we conclude
that $\bar x\in K$. Since $\{x^k\}_{k\in\na}\subset U$, we get that all cluster points of
$\{x^k\}_{k\in\na}$ belong to $K\cap U$, and then, using \Cref{p1}(ii), we get that $\lim_{k\to\infty}x^k=\bar x\in
K\cap U$, establishing the convergence result for CARM.
\end{proof}

\section{Linear convergence rate of MAAP and CARM under a local error bound assumption}\label{s3}

In \cite{Arefidamghani:2021}, the following  {\it global error bound} assumption on the sets $K,U$, denoted as \eqref{eb}, was considered:

\begin{enumerate}[label=EB),font=\bfseries,leftmargin=*,ref=EB]
\item \label[assumption]{eb}  There exists $\bar\omega >0$ such that $\dist(x,K)\ge\bar\omega \dist(K\cap U)$ for all $x\in U$.
\end{enumerate}

Let us comment on the connection between
\eqref{eb} and other notions of error bounds which have been introduced in the past, all of them related to regularity assumptions imposed on the solutions of certain problems. If the problem at hand consists of solving $H(x)=0$ with a smooth $H:\re^n\to\re^m$, a classical regularity condition demands that $m=n$ and the Jacobian matrix of $H$ be nonsingular at a solution $x^*$, in which case, Newton's method, for instance, is known to enjoy superlinear or quadratic convergence. This condition implies local uniqueness of the solution $x^*$. For problems
with nonisolated solutions, a less demanding assumption is the
notion of {\it calmness} (see \cite{Rockafellar:2004}, Chapter 8, Section F), which requires that 
\begin{equation}\label{fufu}
\frac{\lV H(x)\rV}{\dist(x,S^*)}\ge\omega
\end{equation}
for all $x\in\re^n\setminus S^*$ and some $\omega>0$, where $S^*$ is the solution set, \emph{i.e.}, the set of zeros of $H$.
 Calmness, also called upper-Lipschitz continuity (see \cite{Robinson:1982}), is a classical example of error bound, and it holds in many situations (\emph{e.g.}, when $H$ is affine, by virtue of Hoffman's Lemma, \cite{Hoffman:1952}). It implies that the solution set is locally a Riemannian manifold (see \cite{Behling:2013b}), and it has been used for establishing superlinear convergence of Levenberg-Marquardt methods in \cite{Kanzow:2004}.

When dealing with convex feasibility problems, as in this paper, it seems reasonable to replace the numerator of \eqref{fufu} by the distance from $x$ to some of the convex sets, giving rise to \eqref{eb}.
Similar error bounds for feasibility problems can be found, for instance, in \cite{Bauschke:1993,Bauschke:1996a,Drusvyatskiy:2015,Kruger:2018}.

Under \eqref{eb}, it was proved in \cite{Arefidamghani:2021} that MAP converges linearly, with asymptotic constant bounded above
by $\sqrt{1-\bar\omega^2}$, and that CRM also converges linearly, with a better upper bound for the asymptotic constant,
namely $\sqrt{(1-\bar\omega^2)/(1+\bar\omega^2)}$. In this section, we will prove similar results for MAAP and CARM,
assuming a {\it local error bound} related not just to $K,U$, but also to the separating operator $S$. The local error bound,
denoted as \eqref{leb} is defined as:

\begin{enumerate}[label=LEB),font=\bfseries,leftmargin=*,ref=LEB] 
\item \label[assumption]{leb} There exists a set $V\subset\re^n$ and a scalar $\omega >0$ such that 
$$\dist(x,S(x))\ge\omega \dist(x,K\cap U)\quad \mbox{for all} \quad x\in U\cap V.$$
\end{enumerate}

We reckon that \eqref{leb} becomes meaningful, and relevant for establishing convergence rate results, only when the set $V$ contains the tail of the sequence generated by the algorithm; otherwise it might be void (\emph{e.g.,} it holds trivially, with any $\omega$, when $U\cap V=\emptyset$). In order to facilitate the presentation, we opted for 
introducing additional conditions on $V$ in our convergence results,
rather than in the definition of \eqref{leb}. 

The use of a local error bound instead of a global one makes sense, because the definition of linear convergence rate deals
only with iterates $x^k$ of the generated sequence with large enough $k$, and the convergence of the sequences of interest
has already been established in \Cref{t1}, so that only points close enough to the limit $x^*$ of the sequence matter. 
In fact, the convergence rate analysis for MAP and CRM in \cite{Arefidamghani:2021} holds, without any substantial change, under a local, 
rather than global, error bound.
 
The set $V$ could be expected to be a neighborhood of the limit $x^*$ of the sequence, but we do not specify it for the time 
being, because for the prototypical example of separating operator, {\emph{i.e.}}, the one in \Cref{ex1} of \Cref{s2}, it will have, 
as we will show later, a slightly more complicated structure: a ball centered at $x^*$ minus a certain ``slice''.  

We start with the convergence rate analysis for MAAP.

\begin{proposition}\label{p6}
Assume that $K,U$ and the separating operator $S$ satisfy \eqref{leb}. Consider $T^S:\re^n\to\re^n$ as in \eqref{e1}. 
Then, for all $x\in U\cap V$,
\begin{equation}\label{aa21}
(1-\omega^2)\lV x-P_{K\cap U}(x)\rV^2\ge\lV T^S(x)-P_{K\cap U}(T^S(x))\rV^2,
\end{equation}
with $\omega$ as in Assumption \eqref{leb}.
\end{proposition}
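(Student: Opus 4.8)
The plan is to combine the firmly-nonexpansive estimate of \Cref{p2} with the local error bound \eqref{leb}, exploiting the fact that the right-hand side of \eqref{aa10} already contains the term $\lV P^S(x)-x\rV^2 = \dist(x,S(x))^2$, which is exactly the quantity bounded from below by \eqref{leb}.

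First I would fix $x\in U\cap V$ and set $z\coloneqq P_{K\cap U}(x)\in K\cap U$, so that $\lV x-z\rV = \dist(x,K\cap U)$. Applying \Cref{p2} with this choice of $z$ and discarding the nonnegative term $\lV T^S(x)-P^S(x)\rV^2$ yields
$$\lV T^S(x)-z\rV^2 \le \lV x-z\rV^2 - \lV P^S(x)-x\rV^2.$$
Since $x\in U\cap V$, assumption \eqref{leb} applies and gives $\lV P^S(x)-x\rV^2 = \dist(x,S(x))^2 \ge \omega^2\dist(x,K\cap U)^2 = \omega^2\lV x-z\rV^2$. Substituting, we obtain $\lV T^S(x)-z\rV^2 \le (1-\omega^2)\lV x-z\rV^2$.

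Finally, since $z\in K\cap U$, we have $\dist(T^S(x),K\cap U)\le\lV T^S(x)-z\rV$, hence
$$\lV T^S(x)-P_{K\cap U}(T^S(x))\rV^2 = \dist(T^S(x),K\cap U)^2 \le \lV T^S(x)-z\rV^2 \le (1-\omega^2)\lV x-z\rV^2,$$
which is precisely \eqref{aa21}, recalling that $\lV x-z\rV = \lV x-P_{K\cap U}(x)\rV$.

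I do not expect a genuine obstacle here: the argument is a direct chaining of \Cref{p2}, the error bound \eqref{leb}, and the definition of the distance function. The only points worth a word of care are that $z = P_{K\cap U}(x)$ indeed lies in $K\cap U$ (so that both the application of \Cref{p2} and the final distance estimate are legitimate), and that it is the hypothesis $x\in U\cap V$ — rather than merely $x\in\re^n$ — that licenses the use of the local error bound; the statement makes no claim about whether $T^S(x)$ lies again in $V$, so no iteration of the estimate is attempted at this level.
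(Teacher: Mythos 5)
Your proof is correct and follows essentially the same route as the paper's: apply \Cref{p2} with $z=P_{K\cap U}(x)$, drop the extra nonnegative term, lower-bound $\dist(x,S(x))^2$ via \eqref{leb}, and finish with $\dist(T^S(x),K\cap U)\le\lV T^S(x)-z\rV$. The only difference is cosmetic ordering of the final distance estimate.
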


\begin{proof}
By \Cref{p2}, for all $z\in K\cap U$ and all $x\in\re^n$,
\begin{align}
\lV T^S(x)-z\rV ^2&\le \lV z-x\rV^2-\lV T^S(x)-P^S(x)\rV^2-\lV P^S(x)-x\rV^2\\&\le\lV x-z\rV^2-\lV P^S(x)-x\rV^2.\label{aa22}
\end{align}
Note that $\lV P^S(x)-x\rV=\dist(x,S(x))$ and that $\lV T^S(x)-P_{K\cap U}(T^S(x))\rV\le\lV T^S(x)-z\rV$ by definition of $P_{K\cap U}$.
Take $z= P_{K\cap U}(x)$, and get from \eqref{aa22}
\begin{align}
\lV T^S(x)-P_{K\cap U}(T^S(x))\rV^2&\le\lV T^S(x)-P_{K\cap U}(x)\rV^2\\&\le\lV x-P_{K\cap U}(x)\rV^2-\dist(x,S(x))^2.\label{aa24}
\end{align}
Take now $x\in U\cap V$ and invoke \eqref{leb} to get from \eqref{aa24}
\begin{align}
\lV T^S(x)-P_{K\cap U}(T^S(x))\rV^2&\le\lV x-P_{K\cap U}(x)\rV^2-\omega^2\dist(x,K\cap U)^2\\&=(1-\omega)^2\lV x-P_{K\cap U}(x)\rV^2,
\end{align} 
which immediately implies the result.
\end{proof}

\Cref{p6} implies that if $\{x^k\}_{k\in\na}$ is the sequence generated by MAAP and $x^k\in V$ for large
enough $k$, then the sequence $\{\dist(x^k,K\cap U)\}_{k\in\na}$ converges Q-linearly, with asymptotic constant bounded above
by $\sqrt{1-\omega^2}$. In order to move from the distance sequence to the sequence $\{x^k\}_{k\in\na}$ itself, we will
invoke the following lemma from \cite{Arefidamghani:2021}.
 
\begin{lemma}\label{l2} 
Consider a nonempty closed convex set $M\subset\re^n$ and a sequence $\{y^k\}_{k\in\na}\subset\re^n$. Assume that $\{y^k\}_{k\in\na}$ is Fej\'er
monotone with respect to $M$, and that $\{\dist(y^k,M)\}_{k\in\na}$ converges R-linearly to $0$. Then $\{y^k\}_{k\in\na}$ converges 
R-linearly to some point $y^*\in M$, with asymptotic constant bounded above by the 
asymptotic constant of $\{\dist(y^k,M)\}_{k\in\na}$. 
\end{lemma}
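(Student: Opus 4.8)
The plan is to extract a geometric decay for $\dist(y^k,M)$ from the $R$-linear hypothesis, then use Fej\'er monotonicity to control the increments $\lV y^{k+1}-y^k\rV$, and finally conclude that $\{y^k\}_{k\in\na}$ is Cauchy with an $R$-linearly decaying tail, its limit lying in $M$ by closedness. Let $r\coloneqq \limsup_{k\to\infty}\dist(y^k,M)^{1/k}$ be the asymptotic constant of $\{\dist(y^k,M)\}_{k\in\na}$; by hypothesis $r\in[0,1)$. Fix any $\rho$ with $r<\rho<1$. Then there is $N\in\na$ and a constant $c>0$ such that $\dist(y^k,M)\le c\,\rho^{k}$ for all $k\ge N$.

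The key step is bounding the increments. For $k\ge N$ let $p^k\coloneqq P_M(y^k)$, so $\lV y^k-p^k\rV=\dist(y^k,M)\le c\rho^k$. Since $\{y^k\}_{k\in\na}$ is Fej\'er monotone with respect to $M$ and $p^k\in M$, we have $\lV y^{k+1}-p^k\rV\le\lV y^k-p^k\rV\le c\rho^k$, whence by the triangle inequality
\begin{equation}\label{aa-l2-incr}
\lV y^{k+1}-y^k\rV\le\lV y^{k+1}-p^k\rV+\lV p^k-y^k\rV\le 2c\,\rho^k.
\end{equation}
Summing \eqref{aa-l2-incr} over a telescoping range shows, for $N\le k\le \ell$, that $\lV y^\ell-y^k\rV\le 2c\sum_{j=k}^{\ell-1}\rho^j\le \frac{2c}{1-\rho}\rho^k$; hence $\{y^k\}_{k\in\na}$ is Cauchy and converges to some $y^*\in\re^n$. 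Letting $\ell\to\infty$ in the same estimate gives $\lV y^*-y^k\rV\le\frac{2c}{1-\rho}\rho^k$ for all $k\ge N$, and since $\dist(y^*,M)\le\dist(y^k,M)+\lV y^*-y^k\rV\to 0$ with $M$ closed, we get $y^*\in M$.

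It remains to read off the asymptotic constant. From $\lV y^*-y^k\rV\le\frac{2c}{1-\rho}\rho^k$ we obtain $\limsup_{k\to\infty}\lV y^*-y^k\rV^{1/k}\le\rho$. Since $\rho$ was an arbitrary number in $(r,1)$, letting $\rho\downarrow r$ yields $\limsup_{k\to\infty}\lV y^*-y^k\rV^{1/k}\le r<1$, so $\{y^k\}_{k\in\na}$ converges $R$-linearly to $y^*\in M$ with asymptotic constant at most that of $\{\dist(y^k,M)\}_{k\in\na}$. (If some $y^k=y^*$ for infinitely many $k$ the statement is read in the obvious degenerate sense; otherwise Definition~\ref{d1} applies verbatim.) The only mild subtlety — the ``main obstacle'' — is keeping the choice of $\rho$ uniform through the summation and the limit, i.e.\ making sure the geometric tail bound survives the telescoping step without losing the constant; this is handled cleanly by fixing $\rho\in(r,1)$ first, doing all estimates with that $\rho$, and only at the very end letting $\rho\downarrow r$.
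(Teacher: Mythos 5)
Your proof is correct and complete: the chain (geometric bound on $\dist(y^k,M)$) $\Rightarrow$ (Fej\'er monotonicity bounds $\lV y^{k+1}-y^k\rV$ by $2\dist(y^k,M)$) $\Rightarrow$ (telescoping gives a Cauchy sequence with geometric tail) $\Rightarrow$ (limit in $M$ by closedness, and $\limsup_k\lV y^k-y^*\rV^{1/k}\le\rho$ for every $\rho>r$) is exactly the standard argument. The paper itself does not reproduce a proof but defers to Lemma~3.4 of the cited reference, where the same Fej\'er-plus-telescoping reasoning is used, so your route matches the intended one.
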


\begin{proof}
See Lemma 3.4 in \cite{Arefidamghani:2021}.
\end{proof}

Next we establish the linear convergence of MAAP under \eqref{leb}.

\begin{theorem}\label{t2}
Consider a closed and convex set $K\subset\re^n$ and an affine manifold $U\subset\re^n,$ such that $K\cap U\neq \emptyset.$ Moreover, assume that $S$ is a separating operator 
for $K$ satisfying Assumptions \emph{A1--A3}. Suppose that $K,U$ and the separating operator $S$ satisfy \eqref{leb}. Let $\{x^k\}_{k\in\na}$
be the sequence generated by MAAP from any starting point $x^0\in\re^n$. If there exists $k_0$ such that
$x^k\in V$ for all $k\ge k_0$, then $\{x^k\}_{k\in\na}$ converges R-linearly to some point
$x^*\in K\cap U$, and the asymptotic constant is bounded above by $\sqrt{1-\omega^2}$, with $\omega$ and $V$ as in \eqref{leb}.
\end{theorem}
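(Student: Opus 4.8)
The plan is to combine \Cref{p6} with \Cref{l2}. The key observation is that \Cref{p6}, applied along the tail of the MAAP sequence, gives a $Q$-linear (hence $R$-linear) decay of the distances $\dist(x^k,K\cap U)$, while \Cref{t1} already guarantees that the sequence is Fej\'er monotone with respect to $M:=K\cap U$ and convergent to some $x^*\in K\cap U$; \Cref{l2} then upgrades the $R$-linear decay of the distances to $R$-linear convergence of the iterates themselves, with the same asymptotic constant bound.

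First I would recall from \Cref{t1} that, since $S$ satisfies A1--A3 and $K\cap U\neq\emptyset$, the MAAP sequence $\{x^k\}_{k\in\na}$ is well defined, contained in $U$, Fej\'er monotone with respect to $M=K\cap U$, and converges to some $x^*\in K\cap U$. Next, for $k\ge k_0$ we have $x^k\in U\cap V$ by hypothesis (together with $\{x^k\}\subset U$), so \Cref{p6} applies with $x=x^k$, giving
\[
\lV x^{k+1}-P_{K\cap U}(x^{k+1})\rV^2 \le (1-\omega^2)\,\lV x^k-P_{K\cap U}(x^k)\rV^2
\]
for all $k\ge k_0$, that is $\dist(x^{k+1},M)\le\sqrt{1-\omega^2}\,\dist(x^k,M)$. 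Note $\omega\in(0,1)$ may be assumed (if $\omega\ge 1$ the bound $\sqrt{1-\omega^2}$ is not meaningful, but in that degenerate case one simply has $\dist(x^{k+1},M)=0$ eventually and the conclusion is trivial; alternatively one replaces $\omega$ by $\min\{\omega,1\}$ in \eqref{leb}). Iterating from $k_0$ yields $\dist(x^k,M)\le (1-\omega^2)^{(k-k_0)/2}\dist(x^{k_0},M)$ for $k\ge k_0$, so $\limsup_{k\to\infty}\dist(x^k,M)^{1/k}\le\sqrt{1-\omega^2}<1$; thus $\{\dist(x^k,M)\}_{k\in\na}$ converges $R$-linearly to $0$ with asymptotic constant at most $\sqrt{1-\omega^2}$.

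Finally I would invoke \Cref{l2} with this $M$ and $y^k=x^k$: the hypotheses of \Cref{l2} are exactly Fej\'er monotonicity with respect to $M$ (from \Cref{t1}) and $R$-linear convergence of $\{\dist(x^k,M)\}$ to $0$ (just shown), so $\{x^k\}_{k\in\na}$ converges $R$-linearly to some $x^*\in M=K\cap U$ with asymptotic constant bounded above by that of $\{\dist(x^k,M)\}$, hence by $\sqrt{1-\omega^2}$. This $x^*$ coincides with the limit already identified in \Cref{t1}, completing the proof. I do not anticipate a serious obstacle here: the real work was done in \Cref{p6} and \Cref{l2}; the only mild care needed is the bookkeeping of the index shift (the estimate only holds for $k\ge k_0$, which is harmless for the $R$-linear rate since it is an asymptotic notion) and noting that $x^k\in V$ for all $k\ge k_0$ together with $\{x^k\}\subset U$ puts $x^k$ in $U\cap V$, which is what \eqref{leb} and \Cref{p6} require.
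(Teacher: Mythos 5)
Your proposal is correct and follows essentially the same route as the paper's own proof: apply \Cref{p6} along the tail $k\ge k_0$ to get Q-linear (hence R-linear) decay of $\dist(x^k,K\cap U)$, then invoke \Cref{l2} together with the Fej\'er monotonicity from \Cref{t1} to transfer the rate to the iterates. The extra remark about the degenerate case $\omega\ge 1$ is harmless bookkeeping that the paper leaves implicit.
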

  
\begin{proof} 
The fact that $\{x^k\}_{k\in\na}$ converges to some $x^*\in K\cap U$ has been established in \Cref{t1}. 
Take any $k\ge k_0$. By assumption, $x^k\in V$ and by definition of $T^S$, $x^k\in U$.
So, we can take $x=x^k$ in \Cref{p6}, in which case $T^S(x)=x^{k+1}$, and rewrite \eqref{aa21}
as $(1-\omega^2)\dist(x^k,K\cap U)^2\ge \dist(x^{k+1},K\cap U)^2$  for $k\ge k_0$, which  implies 
that $\{\dist(x^k,K\cap U)\}_{k\in\na}$ converges Q-linearly,
and hence R-linearly,
with asymptotic constant bounded by $\sqrt{1-\omega^2}$. The corresponding result for the R-linear convergence
of $\{x^k\}_{k\in\na}$ with the same bound for the asymptotic constant follows then from \Cref{l2}, since $\{x^k\}_{k\in\na}$ is Fej\'er monotone
with respect to $K\cap U$ by \Cref{t1}. 
\end{proof} 

Now we analyze the convergence rate of CARM under \eqref{leb}, for which a preliminary result, relating $x, C^S(x)$ and $T^S(x)$, is needed.
The corresponding result for $x,C(x), T(x)$ can be found in \cite{Behling:2020}, where it is proved with a geometrical argument.
Here we present an analytical one.

\begin{proposition}\label{pqq}
Consider the operators $C^S,T^S:\re^n\to\re^n$ defined in \eqref{aa7}. Then $T^S(x)$ belongs to the segment between 
$x$ and $C^S(x)$ for all $x\in U$.
\end{proposition}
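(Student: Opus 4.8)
The plan is to combine the half-space reformulation of $C^S$ from \Cref{l1} with one explicit least-norm computation performed \emph{inside} the affine manifold $U$. First I would dispose of the trivial case $x\in K$: then $x\in S(x)$ by A2, hence $P^S(x)=x$ and $R^S(x)=x$, and $x\in U$ gives $R_U(x)=x$; thus $C^S(x)=\circum(x,x,x)=x$ and $T^S(x)=P_U(x)=x$, so $T^S(x)=x=C^S(x)$ and the assertion is vacuous.

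Assume now $x\in U\setminus K$. By \Cref{pq1} we have $x\notin S(x)$, so the vector $n:=x-P^S(x)$ is nonzero, and by \eqref{aa13} the set $H(x)=\{z\in\re^n:\scal{z-P^S(x)}{n}\le 0\}$ is the half-space through $P^S(x)$ with normal $n$. Two facts feed the argument: $T^S(x)=P_U(P^S(x))$ by \eqref{aa7}, and $C^S(x)=P_{H(x)\cap U}(x)$ by \Cref{l1}.

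Next I would introduce coordinates adapted to $U$: write $U=x+L$ with $L$ the direction subspace of $U$, put $n_L:=P_L(n)$, and recall $P_U(w)=x+P_L(w-x)$. Then $T^S(x)=P_U(P^S(x))=x+P_L(-n)=x-n_L$. One checks $n_L\ne 0$: if $n\perp L$ then every $z\in U$ would satisfy $\scal{z-P^S(x)}{n}=\scal{z-x}{n}+\norm{n}^2=\norm{n}^2>0$, forcing $H(x)\cap U=\emptyset$, which contradicts $\emptyset\ne K\cap U\subset H(x)\cap U$ (the inclusion by A2). To compute $C^S(x)=P_{H(x)\cap U}(x)$, parametrize $z=x+\ell$ with $\ell\in L$; since $n-n_L\perp L$, the condition $z\in H(x)$ becomes $\norm{n}^2+\scal{\ell}{n_L}\le 0$, i.e. $\scal{\ell}{n_L}\le-\norm{n}^2$, whose minimum-norm solution $\ell\in L$ is readily found to be $\ell^\star=-\frac{\norm{n}^2}{\norm{n_L}^2}\,n_L$. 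Hence $C^S(x)=x-\frac{\norm{n}^2}{\norm{n_L}^2}\,n_L$, and comparing, $T^S(x)-x=-n_L=\frac{\norm{n_L}^2}{\norm{n}^2}\bigl(C^S(x)-x\bigr)$. Since $P_L$ does not increase norms, $\mu:=\norm{n_L}^2/\norm{n}^2\in(0,1]$, so $T^S(x)=(1-\mu)x+\mu\,C^S(x)$ lies on the segment between $x$ and $C^S(x)$.

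The step needing the most care is the computation of $C^S(x)=P_{H(x)\cap U}(x)$: one must verify that the relative normal $n_L$ does not vanish, so that $H(x)\cap U$ is a genuine nonempty half-space of $U$, and then solve correctly the resulting single-inequality least-norm problem inside $L$. Everything else is routine bookkeeping with $U=x+L$ and the identity $P_U(w)=x+P_L(w-x)$.
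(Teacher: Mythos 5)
Your proof is correct, but it takes a genuinely different route from the paper's. The paper works directly with the circumcenter: it forms the affine hull $E$ of $x$, $R^S(x)$, $R_U(R^S(x))$, shows $T^S(x)\in E$ by exhibiting it as the convex combination $\tfrac12 x+\tfrac14 R_U(R^S(x))+\tfrac14 R^S(x)$, runs a case analysis on $\dim(E\cap U)\in\{0,1,2\}$, and in the generic one-dimensional case writes $C^S(x)=x+\eta(T^S(x)-x)$ and proves $\eta\ge 1$ from the equidistance property of the circumcenter together with the (firm) nonexpansiveness of $P_U$ applied at $R^S(x)$. You instead lean on the half-space reformulation $C^S(x)=P_{H(x)\cap U}(x)$ of \Cref{l1}, split the normal $n=x-P^S(x)$ into its components along the direction subspace $L$ of $U$ and $L^\perp$, and compute both $T^S(x)=x-n_L$ and $C^S(x)=x-\frac{\norm{n}^2}{\norm{n_L}^2}n_L$ in closed form; your verification that $n_L\ne 0$ (via $K\cap U\ne\emptyset$ and $K\subset H(x)$, which holds by A2 and the projection characterization) is exactly the point that needed care, and your least-norm computation inside $L$ is right. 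What your approach buys is a quantitative refinement: the explicit coefficient $\mu=\norm{n_L}^2/\norm{n}^2\in(0,1]$ with $T^S(x)=(1-\mu)x+\mu\,C^S(x)$, rather than the bare inequality $\eta\ge 1$. What the paper's approach buys is independence from the explicit parametrization of $U$ and some identities (notably $P_U(R^S(x))=2T^S(x)-x$) that are recycled in the later convergence-rate arguments. One cosmetic remark: in the case $x\in K$ the statement is not vacuous but trivially true, since the segment degenerates to the single point $x=T^S(x)=C^S(x)$.
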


\begin{proof}
Let $E$ denote the affine manifold spanned by $x, R^S(x)$ and $R_U(R^S(x))$. By definition, the circumcenter of these
three points, namely $C^S(x)$, belongs to $E$. We claim that $T^S(x)$ also belongs to $E$, and next we proceed to prove the claim.
Since $U$ is an affine manifold, $P_U$ is an affine operator, so that $P_U(\alpha x+(1-\alpha)x')=\alpha P_U(x)+(1-\alpha)P_U(x')$
for all $\alpha\in\re$ and all $x,x'\in\re^n$. By \eqref{e1}, $R_U(R^S(x))=2P_U(R^S(x))-R^S(x)$, so that 
\begin{equation}\label{ei1}
P_U(R^S(x))=\frac{1}{2}\left(R_U(R^S(x))+R^S(x)\right).
\end{equation}
On the other hand, using the affinity of $P_U$, the definition of $T^S$ and the assumption that $x\in U$, we have
\begin{equation}\label{eii} 
P_U(R^S(x))=P_U(2P^S(x)-x)=2P_U(P^S(x))-P_U(x)=2T^S(x)-x,
\end{equation}
so that
\begin{equation}\label{ei2}
T^S(x)=\frac{1}{2}\left(P_U(R^S(x))+x\right).
\end{equation}
Combining \eqref{ei1} and \eqref{ei2},
\[ 
T^S(x)=\frac{1}{2}x+\frac{1}{4}R_U(R^S(x))+\frac{1}{4}R^S(x),
\] 
{\emph{i.e.}}, $T^S(x)$ is a convex combination of $x, R_U(R^S(x))$ and $R^S(x)$. Since these three points belong to $E$, the same
holds for $T^S(x)$ and the claim holds. We observe now that $x\in U$ by assumption, $T^S(x)\in U$ by definition,
and $C^S(x)\in U$ by \Cref{p5}(ii). Now we consider three cases: if dim$(E\cap U)=0$ then $x,T^S(x)$ and $C^S(x)$
coincide and the result holds trivially. If dim$(E\cap U)=2$ then $E\subset U$, so that $R^S(x)\in U$ so that
$R_U(R^S(x))=R^S(x)$, in which case $C^S(x)$ is the midpoint between $x$ and $R^S(x)$, which is precisely $P^S(x)$.
Hence, $P^S(x)\in U$, so that $T^S(x)=P_U(P^S(x))=P^S(x)=C^S(x)$, implying that $T^S(x)$ and $C^S(x)$ coincide, and the
result holds trivially. The interesting case is the remaining one, {\emph{i.e.}}, dim$(E\cap U)=1$. In this case $x, T^S(x)$ and
$C^S(x)$ lie in a line, so that we can write $C^S(x)=x+\eta(T^S(x)-x)$ with $\eta\in\re$, and it suffices to prove that $\eta\ge 1$.

By the definition of $\eta$, 
\begin{equation}\label{ei3}
\lV C^S(x)-x\rV=\lv\eta\rv\,\lV T^S(x)-x\rV.
\end{equation}
In view of \eqref{aa11} with $M=U, y=C^S(x)$ and $x=R^S(x)$,
\begin{equation}\label{ao}
\lV C^S(x)-R^S(x)\rV\ge\lV C^S(x)-P_U(R^S(x))\rV.
\end{equation}
Then
\begin{align}
\lV C^S(x)-x\rV&=\lV C^S(x)_R^S(x)\rV\ge\lV C^S(x)-P_U(R^S(x))\rV\\&=\lV\left(C^S(x)-x\right)-\left(P_U(R^S(x))-x\right)\rV\\&=
\lV\eta\left(T^S(x)-x\right)-2\left(T^S(x)-x\right)\rV\\&=\lv\eta-2\rv\,\lV T^S(x)-x\rV,\label{ei4}
\end{align}
using the definition of the circumcenter in the first equality, \eqref{ao} in the inequality, and 
\eqref{eii}, as well as the definition of $\eta$, in the third equality. Combining \eqref{ei3} and \eqref{ei4},
we get 
\[
\lv\eta\rv\,\lV T^S(x)-x\rV\ge\lv\eta-2\rv\,\lV T^S(x)-x\rV,
\]
implying that $\lv\eta\rv\ge\lv 2-\eta\rv$,
which holds only when $\eta\ge 1$, completing the proof.
\end{proof}
 
We continue with another intermediate result.
 
\begin{proposition}\label{p7}
Assume that \eqref{leb} holds for $K,U$ and $S$, and take $x\in U$. If $x,C^S(x)\in V$ then 
\begin{equation}\label{ae25}
(1+\omega^2)\dist(C^S(x),K\cap U)^2\le(1-\omega^2)\dist(x,K\cap U)^2,
\end{equation} 
with $V, \omega$ as in \eqref{leb}.
\end{proposition}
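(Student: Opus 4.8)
I would mimic the proof of \Cref{p6} but cash in the ``reflection gain'' encoded in \Cref{l1} and \Cref{pqq}. If $x\in K$ then $x\in K\cap U$ (as $x\in U$), $C^S(x)=x$, and \eqref{ae25} is trivial, so assume $x\in U\setminus K$. Applying \Cref{p5}(i) with $z=P_{K\cap U}(x)\in K\cap U$ and using $\dist(C^S(x),K\cap U)\le\norm{C^S(x)-z}$ gives
\begin{equation*}
\dist\bigl(C^S(x),K\cap U\bigr)^2\le\dist(x,K\cap U)^2-\norm{C^S(x)-x}^2 ,
\end{equation*}
so the whole statement reduces to the lower bound $\norm{C^S(x)-x}^2\ge\tfrac{2\omega^2}{1+\omega^2}\dist(x,K\cap U)^2$, after which \eqref{ae25} follows by substitution.

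\textbf{Quantifying the step.} Put $d:=\dist(x,S(x))=\norm{x-P^S(x)}$ and $e:=\dist(P^S(x),U)=\norm{P^S(x)-T^S(x)}$. Since $T^S(x)=P_U(P^S(x))$ and $x\in U$, the vectors $x-T^S(x)$ and $P^S(x)-T^S(x)$ are orthogonal, so $\norm{x-T^S(x)}^2=d^2-e^2$. By \Cref{l1}, $C^S(x)=P_{H(x)\cap U}(x)$, obtained by moving $x$ orthogonally to the hyperplane $\bound H(x)\ni P^S(x)$ until it reaches $U$; evaluating this projection (which en route re-proves \Cref{pqq}, giving the explicit $C^S(x)=x+\eta(T^S(x)-x)$ with $\eta=d^2/(d^2-e^2)\ge1$) yields $\norm{C^S(x)-x}^2=d^4/(d^2-e^2)\ge d^2$. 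Using \eqref{leb} at $x$, i.e.\ $d\ge\omega\dist(x,K\cap U)$, already gives $\norm{C^S(x)-x}^2\ge\omega^2\dist(x,K\cap U)^2$, but this is only the MAAP bound of \Cref{p6}.

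\textbf{Main obstacle.} The real work is sharpening the factor $\omega^2$ to $\tfrac{2\omega^2}{1+\omega^2}$, and this is where the hypothesis $C^S(x)\in V$ is used. The difficulty is that $\norm{C^S(x)-x}^2=d^4/(d^2-e^2)$ collapses to $d^2$ in the degenerate configuration $e=0$ (that is, $P^S(x)\in U$, so $C^S(x)=P^S(x)$), and there the estimate above recovers only $(1-\omega^2)\dist(x,K\cap U)^2$; the set $V$ (a ball about the limit minus a thin ``slice'') is precisely designed to keep the iterates away from this regime. In the good regime I would combine: the projection inequality $\scal{x-P^S(x)}{P_{K\cap U}(x)-P^S(x)}\le 0$ (from A2, since $P_{K\cap U}(x)\in S(x)$), which controls $d^2-e^2=\norm{T^S(x)-x}^2$; the estimate $\dist(T^S(x),K\cap U)^2\le(1-\omega^2)\dist(x,K\cap U)^2$ of \Cref{p6} applied at $x$, together with the collinearity of $x,T^S(x),C^S(x)$ from \Cref{pqq}; and a second use of \eqref{leb}, now at $C^S(x)$ --- legitimate because $C^S(x)\in U$ by \Cref{p5}(ii) and $C^S(x)\in V$ by hypothesis --- to bound $\dist(C^S(x),K\cap U)$ via $\dist(C^S(x),S(C^S(x)))$. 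Feeding these into the displayed Fej\'er inequality and bookkeeping the cross terms in the Pythagorean relations among $x,P^S(x),T^S(x),C^S(x),P_{K\cap U}(x)$ should deliver $\norm{C^S(x)-x}^2\ge\tfrac{2\omega^2}{1+\omega^2}\dist(x,K\cap U)^2$, hence \eqref{ae25}. I expect this last combination --- and pinning down exactly the form of $V$ for which it works --- to be the bulk of the argument.
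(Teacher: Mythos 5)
Your opening reduction is where the argument breaks. From \Cref{p5}(i) with $z=P_{K\cap U}(x)$ you obtain $\dist(C^S(x),K\cap U)^2\le \dist(x,K\cap U)^2-\lV C^S(x)-x\rV^2$ and then try to establish the sufficient condition $\lV C^S(x)-x\rV^2\ge\frac{2\omega^2}{1+\omega^2}\dist(x,K\cap U)^2$. That condition does not follow from the hypotheses of \Cref{p7}. Your own computation gives $\lV C^S(x)-x\rV^2=d^4/(d^2-e^2)$ with $d=\dist(x,S(x))$, which for small $e$ is essentially $d^2$, and \eqref{leb} only yields $d^2\ge\omega^2\dist(x,K\cap U)^2$, strictly weaker than $\frac{2\omega^2}{1+\omega^2}\dist(x,K\cap U)^2$ when $\omega<1$. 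You hope that $V$ excludes this regime, but in \Cref{p7} the set $V$ is the abstract set of \eqref{leb}; the ``ball minus slice'' structure appears only later (\Cref{p8,p9}) and only for the prototypical separating operator. So under the stated hypotheses your target inequality can simply fail (take $e=0$ with \eqref{leb} tight at $x$), even though \eqref{ae25} still holds in such configurations because $\dist(C^S(x),K\cap U)$ is then correspondingly small. Passing from $\lV C^S(x)-z\rV^2$ to a lower bound on $\lV C^S(x)-x\rV^2$ discards exactly the slack that makes the proposition true.

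The paper never lower-bounds $\lV C^S(x)-x\rV$. It instead derives the refined inequality
\[
\dist(x,S(x))^2\le\lV x-z\rV^2-\lV C^S(x)-z\rV^2-\lV C^S(x)-P^S(x)\rV^2,
\]
using \Cref{p2}, the projection characterization $(x-C^S(x))^{\top}(z-C^S(x))\le 0$ coming from \Cref{l1}, the collinearity of \Cref{pqq}, and a Pythagorean identity for $P_U$. It then applies \eqref{leb} twice: at $x$, to bound the left-hand side below by $\omega^2\dist(x,K\cap U)^2$, and at $C^S(x)$ --- this is where the hypothesis $C^S(x)\in V$ enters --- to get $\lV C^S(x)-P^S(x)\rV^2\ge\omega^2\dist(C^S(x),K\cap U)^2$. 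The coefficient $1+\omega^2$ arises from adding this to $\lV C^S(x)-z\rV^2\ge\dist(C^S(x),K\cap U)^2$; it is not a pointwise improvement of the step length $\lV C^S(x)-x\rV$. You list all the right ingredients in your final paragraph, but the assertion that ``bookkeeping the cross terms should deliver'' your target bound is the entire content of the proof, and it cannot be carried out in the form you propose: the two applications of \eqref{leb} must be combined in the single displayed inequality above rather than funneled through a lower bound on $\lV C^S(x)-x\rV$.
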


\begin{proof} Take $z\in K\cap U, x\in V\cap U$.
We use \Cref{p2}, rewriting \eqref{aa10}
as
\begin{equation}\label{aa25}
\lV x-P^S(x)\rV^2\le \lV x-z\rV^2-\lV P_U(P^S(x))-z\rV^2-\lV P_U(P^S(x))-P^S(x)\rV^2
\end{equation}
for all $x\in\re^n$ and all $z\in K\cap U$. Since $x\in U$, we get from \Cref{l1} that $C^S(x)=P_{H(x)}(x)$. We apply
next the well known characterization of projections~\cite[Theorem 3.16]{Bauschke:2017} to get
\begin{equation}\label{aa26}
(x-C^S(x))^{\top}(z-C^S(x))\le 0.
\end{equation} 
In view of \Cref{pqq}, 
$P_U(P^S(x))$ is a convex combination of $x$ and $C^S(x)$, meaning that $P_U(P^S(x))-C^S(x)$ is a nonnegative multiple
of $x-C^S(x)$, so that \eqref{aa26} implies 
\begin{equation}\label{aa27}
(P_U(P^S(x))-C^S(x))^{\top}(z-C^S(x))\le 0. 
\end{equation}
Expanding the inner product in \eqref{aa27}, we obtain
\begin{equation}\label{aa28}
\lV P_U(P^S(x))-z\rV^2\ge\lV C^S(x)-z\rV^2+\lV C^S(x)-P_U(P^S(x))\rV^2.
\end{equation}
Combining \eqref{aa25} and \eqref{aa28}, we have
\begin{align}
\dist(x,S(x))^2\le&\lV x-z\rV^2-\lV C^S(x)-z\rV^2-\lV C^S(x)-P_U(P^S(x))\rV^2\\&-\lV P_U(P^S(x))-P^S(x)\rV^2.\label{aa29}
\end{align}
Now, since $U$ is an affine manifold, $(y-P_U(y))^{\top}(w-P_U(y))=0$ for all $y\in\re^n$ and all $w\in U$,
so that 
\begin{equation}\label{aa30}
\lV w-y\rV^2=\lV w-P_U(y)\rV^2+\lV P_U(y)-y\rV^2.
\end{equation}  
Since $C^S(x)\in U$ by \Cref{l1}, we use \eqref{aa30} with $y=P^S(x), w=C^S(x)$, getting 
\begin{equation}\label{aa31}
\lV C^S(x)-P_U(P^S(x))\rV^2+\lV P_U(P^S(x))-P^S(x)\rV^2=\lV C^S(x)-P^S(x)\rV^2.
\end{equation}
Replacing \eqref{aa31} in \eqref{aa29}, we obtain
\begin{align}
\dist(x,S(x))^2&\le\lV x-z\rV^2-\lV C^S(x)-z\rV^2-\lV C^S(x)-P^S(x)\rV^2\\&
\le
\lV x-z\rV^2-\dist(C^S(x),K\cap U)^2-\dist(C^S(x),S(x))^2,\label{aa32}
\end{align}
using the facts that $P^S(x)\in S(x)$ and $z\in K\cap U$ in the last inequality.
Now, we take $z=P_{K\cap U}(x)$, recall that $x,C^S(x)\in V$ by hypothesis, and invoke the \eqref{leb} assumption,
together with \eqref{aa32}, in order to get
\begin{align}
\omega^2\dist(x,K\cap U)^2\le& \dist(x,S(x))^2\\\le& \dist(x,K\cap U)^2-\dist(C^S(x),K\cap U)^2\\&-\omega^2\dist(C^S(x),K\cap U)^2\\=&\dist(x,K\cap U)^2-(1+\omega^2)\dist(C^S(x),K\cap U)^2.\label{aa33} 
\end{align}
The result follows rearranging \eqref{aa33}.
\end{proof}

Next we present our convergence rate result for CARM.

\begin{theorem}\label{t3}
Consider a closed and convex set $K\subset\re^n$, an affine manifold $U\subset\re^n,$ such that $K\cap U\neq \emptyset$, and a separating operator $S$
for $K$ satisfying Assumptions \emph{A1--A3}. Suppose that $K,U$ and the separating operator $S$ satisfy \eqref{leb}. Let $\{x^k\}_{k\in\na}$
be the sequence generated by CARM from any starting point $x^0\in U$. If there exists $k_0$ such that
$x^k\in V$ for all $k\ge k_0$, then $\{x^k\}_{k\in\na}$ converges R-linearly to some point
$x^*\in K\cap U$, and the asymptotic constant is bounded above by $\sqrt{{(1-\omega^2)}/{(1+\omega^2)}}$, with $\omega$ and $V$ as in \eqref{leb}.
\end{theorem}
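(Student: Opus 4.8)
The plan is to mirror the proof of \Cref{t2}, replacing \Cref{p6} by its CARM counterpart \Cref{p7}. First I would invoke \Cref{t1} to obtain that the CARM sequence $\{x^k\}_{k\in\na}$ (starting from $x^0\in U$) is well defined, contained in $U$, Fej\'er monotone with respect to $K\cap U$, and convergent to some $x^*\in K\cap U$. This settles convergence and well-definedness, leaving only the rate to be established.

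Next, fix any $k\ge k_0$. By hypothesis both $x^k$ and $x^{k+1}=C^S(x^k)$ have indices $\ge k_0$, hence both lie in $V$, and $x^k\in U$ by \Cref{t1}. Thus \Cref{p7} applies with $x=x^k$, giving
\[
(1+\omega^2)\dist(x^{k+1},K\cap U)^2\le(1-\omega^2)\dist(x^k,K\cap U)^2,
\]
i.e., $\dist(x^{k+1},K\cap U)\le\sqrt{(1-\omega^2)/(1+\omega^2)}\,\dist(x^k,K\cap U)$ for all $k\ge k_0$. Hence $\{\dist(x^k,K\cap U)\}_{k\in\na}$ converges Q-linearly, and therefore R-linearly, to $0$, with asymptotic constant bounded above by $\sqrt{(1-\omega^2)/(1+\omega^2)}$.

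Finally, since $\{x^k\}_{k\in\na}$ is Fej\'er monotone with respect to the nonempty closed convex set $K\cap U$ by \Cref{t1}, and $\{\dist(x^k,K\cap U)\}_{k\in\na}$ converges R-linearly to $0$, I would apply \Cref{l2} with $M=K\cap U$ and $y^k=x^k$ to conclude that $\{x^k\}_{k\in\na}$ converges R-linearly to its limit $x^*\in K\cap U$, with asymptotic constant bounded above by that of the distance sequence, namely $\sqrt{(1-\omega^2)/(1+\omega^2)}$. This is exactly the claimed bound, so the proof is complete.

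As for obstacles: the genuinely delicate work has already been absorbed into \Cref{pqq} (locating $T^S(x)$ on the segment between $x$ and $C^S(x)$) and \Cref{p7} (the one-step contraction of the distance with the improved factor), so the theorem itself is a routine assembly of those results with \Cref{t1} and \Cref{l2}. The only point requiring a moment's care is that the error bound \eqref{leb} must be invoked at \emph{both} $x^k$ and $C^S(x^k)=x^{k+1}$; this is precisely why the hypothesis is phrased as ``$x^k\in V$ for all $k\ge k_0$'' rather than merely at a single index, and it is the application at $x^{k+1}$ that yields the extra factor $1+\omega^2$ responsible for CARM's strictly better rate over MAAP.
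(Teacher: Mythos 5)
Your proposal is correct and follows essentially the same route as the paper's own proof: invoke \Cref{t1} for convergence and Fej\'er monotonicity, note that both $x^k$ and $x^{k+1}=C^S(x^k)$ lie in $V$ for $k\ge k_0$ so that \Cref{p7} applies, deduce Q-linear (hence R-linear) convergence of the distance sequence with constant $\sqrt{(1-\omega^2)/(1+\omega^2)}$, and finish with \Cref{l2}. Your closing remark about needing \eqref{leb} at both $x^k$ and $x^{k+1}$ matches exactly the one point the paper's proof takes care of.
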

  
\begin{proof} 
The fact that $\{x^k\}_{k\in\na}$ converges to some $x^*\in K\cap U$ has been established in \Cref{t1}. 
Take any $k\ge k_0$. By assumption, $x^k\in V$ and by definition of $T^S$, $x^k\in U$. Also, $k+1\ge k_0$, so that 
$C^S(x^k)=x^{k+1}\in V$
So, we can take $x=x^k$ in \Cref{p7}, and rewrite \eqref{ae25}
as $(1+\omega^2)\dist(x^{k+1},K\cap U)^2\le(1-\omega^2)\dist(x^k,K\cap U)^2$ or equivalently
as 
\[
\frac{\dist(x^{k+1},K\cap U)}{\dist(x^k, K\cap U)}\le \sqrt{\frac{1-\omega^2}{1+\omega^2}},
\]  
for all $k\ge 0$, which immediately implies 
that $\{\dist(x^k,K\cap U)\}_{k\in\na}$ converges Q-linearly,
and hence R-linearly,
with asymptotic constant bounded by $\sqrt{(1-\omega^2)/(1+\omega^2)}$. The corresponding result for the R-linear convergence
of $\{x^k\}_{k\in\na}$ with the same bound for the asymptotic constant follows then from \Cref{l2}, since $\{x^k\}_{k\in\na}$ is Fej\'er monotone
with respect to $K\cap U$ by \Cref{t1}.
\end{proof} 

From now on, given $z\in\re^n, \alpha>0, B(z,\alpha)$ will denote the closed ball centered at $z$ with radius $\alpha$.

The results of \Cref{t2,t3} become relevant only if we are able to find a separating operator $S$ for $K$ such that
\eqref{leb} holds. This is only possible if the ``trivial'' separating operator 
satisfies an error bound, {\emph{i.e.}}, if an error bound holds for the sets $K,U$ themselves. Let $\{ x^k\}_{k\in\na}$ be a sequence generated by
CARM starting at some $x^0\in U$. By \Cref{t1}, $\{x^k\}_{k\in\na}$ converges to some $x^*\in K\cap U$. Without loss of generality, we assume 
that $x^k\notin K$ for all $k$, because otherwise the sequence is finite and it makes no sense to deal with convergence rates.
In such a case, $x^*\in \partial K$, the boundary of $K$. We also assume from 
now on that a local error bound for $K,U,$ say \eqref{leb1}, holds at some neighborhood of $x^*$, {\emph{i.e.}}

\begin{enumerate}[label=LEB1),font=\bfseries,leftmargin=*,ref=LEB1] 
\item \label[assumption]{leb1} There exist $\rho,\bar\omega>0$ such that $\dist(x,K)\ge\bar\omega \dist(x,K\cap U)$ for all
$x\in U\cap B(x^*,\rho)$.
\end{enumerate}

Note that \eqref{leb1} is simply a local version of \eqref{eb}. 
Observe further that \eqref{leb1} does not involve the separating operator $S$, and that it gives a specific form to the set $V$ in \eqref{leb}, namely a ball around $x^*$.

We will analyze the situation for what we call the ``prototypical'' separating operator, namely the operator $S$ presented
in \Cref{ex1}, for the case in which $K$ is the $0$-sublevel set of a convex function $g:\re^n\to\re$. 

We will prove that under some additional mild assumptions on $g$,
for any $\omega <\bar\omega$ there exists a set $V$ such that $U,K,S$ satisfy a local error bound assumption, say \eqref{leb},
with the pair $\omega, V$. 

 Indeed, it will not be necessary to assume \eqref{leb} in the convergence rate result; we will prove that under \eqref{leb1}, \eqref{leb} will be satisfied for any $\omega<\bar\omega$ with an appropriate set $V$ which does
contain the tail of the sequence. 

Our proof strategy will be as follows: in order to check that the error bounds for $K,U$ and $S(x),U$ are virtually the same
for $x$ close to the limit $x^*$ of the CARM sequence, we will prove that the quotient between $\dist(x,S(x))$ and $\dist(x,K)$ approaches
$1$ when $x$ approaches $x^*$. Since both distances vanish at $x=x^*$, we will take the quotient of their first order approximations,
in a L'H\^ospital's rule fashion, and the result will be established, as long as the numerator and denominator of the new quotient
are bounded away from $0$, because otherwise this quotient remains indeterminate. This ``bad'' situation occurs when $x$ approaches
$x^*$ along a direction almost tangent to $K\cap U$, or equivalently almost normal to $\nabla g(x^*)$. Fortunately, the CARM 
sequence, being Fej\'er monotone with respect to $K\cap U$, does not approach $x^*$ in such a tangential way. We will take
an adequate value smaller than the angle between $\nabla g(x^*)$ and $x^k-x^*$. Then, we will exclude directions whose angle with
$\nabla g(x^*)$ is smaller than such value, and find a ball around $x^*$ such that, given any $\omega<\bar\omega$, \eqref{leb}
holds with parameter $\omega$ in the set $V$ defined as the ball minus the ``slice'' containing the ``bad'' directions. Because of the
Fej\'er monotonicity of the CARM sequence, its iterates will remain in $V$ for large enough $k$, and the results of \Cref{t3}
will hold with such $\omega$. We proceed to follow this strategy in detail.           

The additional assumptions on $g$ are continuous differentiability and a Slater condition, meaning that there exists 
$\hat x\in\re^n$ such that $g(\hat x)<0$. When $g$ is of class ${\cal C}^1$, the separating operator of \Cref{ex1}
becomes
\begin{equation}\label{aa34}
S(x)=\begin{cases} K, & {\rm if}\,\,x\in K\\
\{z\in\re^n:\nabla g(x)^{\top}(z-x)+g(x)\leq 0\}\, & {\rm otherwise}.
\end{cases} 
\end{equation}

\begin{proposition}\label{p8}
Let $g:\re^n\to\re$ be convex, of class ${\cal C}^1$ and such that there exists $\hat x\in\re^n$ satisfying
$g(\hat x)< 0$. Take $K=\{x\in\re^n: g(x)\le 0\}$. Assume that $K,U$ satisfy \eqref{leb1}. Take $x^*$ as in \eqref{leb1}, 
fix $0<\nu <\lV\nabla g(x^*)\rV$ (we will establish that $0\ne\nabla g(x^*)$ in the proof of this proposition), 
and define $L_\nu:=\{z\in\re^n: \lv\nabla g(x^*)^{\top}(z-x^*)\rv\le\nu\lV z-x^*\rV\}$. Consider the separating
operator $S$ defined in \eqref{aa34}. Then, for any $\omega<\bar\omega$, with
$\bar\omega$ as in \eqref{leb1}, there exists $\beta>0$ such that $K,U,S$ satisfy \eqref{leb} with 
$\omega$ and $V:=B(x^*,\beta)\setminus L_\nu$.
\end{proposition}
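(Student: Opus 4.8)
\emph{Plan.} The plan is to show that, near $x^*$, the approximate distance $\dist(x,S(x))$ and the exact distance $\dist(x,K)$ are asymptotically equal, i.e. $\dist(x,S(x))/\dist(x,K)\to 1$ as $x\to x^*$; fed into \eqref{leb1}, this promotes the error bound for $(K,U)$ into the error bound \eqref{leb} for $(S,U)$, at the price of shrinking $\bar\omega$ to any prescribed $\omega<\bar\omega$. First I would record a preliminary fact: since $x^*\in\partial K$ we have $g(x^*)=0$ (if $g(x^*)<0$, continuity of $g$ would place $x^*$ in the interior of $K$); and if $\nabla g(x^*)$ were $0$, convexity of $g$ would make $x^*$ a global minimizer, giving $0=g(x^*)\le g(\hat x)<0$, a contradiction. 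Hence $d:=\nabla g(x^*)\neq 0$, so $L_\nu$ is a genuine ``slice'' for every $\nu\in(0,\lV d\rV)$. Now fix $\omega<\bar\omega$ and put $\kappa:=\omega/\bar\omega\in(0,1)$. It suffices to find $\beta\in(0,\rho]$ with $\dist(x,S(x))\ge\kappa\,\dist(x,K)$ for all $x\in B(x^*,\beta)$: then, for $x\in U\cap B(x^*,\beta)$, combining this with $\dist(x,K)\ge\bar\omega\,\dist(x,K\cap U)$ from \eqref{leb1} gives $\dist(x,S(x))\ge\omega\,\dist(x,K\cap U)$, i.e. \eqref{leb} holds with this $\omega$ and $V=B(x^*,\beta)$ — a fortiori with the smaller set $V=B(x^*,\beta)\setminus L_\nu$ of the statement. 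When $x\in K\cap U$ both distances vanish, so only the regime $x\notin K$ (i.e. $g(x)>0$) needs attention.

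\emph{The estimate.} Shrink $\beta$ so that $\nabla g$ is nonzero on $\overline{B(x^*,\beta)}$, and set $\eta(\beta):=\sup_{u\in\overline{B(x^*,\beta)}}\lV\nabla g(u)-d\rV$, which decreases to $0$ as $\beta\downarrow 0$. For $x\in B(x^*,\beta)$ with $g(x)>0$: by \eqref{aa34}, $S(x)$ is the half-space $\{z:\nabla g(x)^{\top}(z-x)+g(x)\le 0\}$, which $x$ violates, so the point--half-space distance formula \eqref{ee1} gives
\[
\dist(x,S(x))=\frac{g(x)}{\lV\nabla g(x)\rV}\ \ge\ \frac{g(x)}{\lV d\rV+\eta(\beta)} .
\]
For $\dist(x,K)$ I would use the explicit feasible point $z:=x-\dfrac{(1+\varepsilon)g(x)}{\lV d\rV^{2}}\,d$ (with $\varepsilon>0$ fixed): a first-order expansion of $g$ at $x$ along the short segment $[x,z]$, using $\nabla g(x)^{\top}d=\lV d\rV^{2}+(\nabla g(x)-d)^{\top}d$ and the uniform continuity of $\nabla g$ on a compact neighborhood of $x^*$, yields $g(z)=-\varepsilon\,g(x)+g(x)\,\theta(\beta,\varepsilon)$ with $\theta(\beta,\varepsilon)\to 0$ as $\beta\downarrow 0$ for fixed $\varepsilon$; hence $g(z)\le-\tfrac{\varepsilon}{2}\,g(x)<0$ for $\beta$ small depending on $\varepsilon$, so $z\in K$ and $\dist(x,K)\le\lV x-z\rV=(1+\varepsilon)g(x)/\lV d\rV$. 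Dividing the two bounds,
\[
\frac{\dist(x,S(x))}{\dist(x,K)}\ \ge\ \frac{\lV d\rV}{(1+\varepsilon)\bigl(\lV d\rV+\eta(\beta)\bigr)}\ \longrightarrow\ 1\qquad(\varepsilon\downarrow0,\ \beta\downarrow0).
\]
So, given $\omega<\bar\omega$, I would first choose $\varepsilon>0$ and then $\beta\in(0,\rho]$ small enough that the right-hand side is $\ge\kappa$ and all the smallness requirements above are met; with that $\beta$, the inequality $\dist(x,S(x))\ge\kappa\,\dist(x,K)$ holds on $B(x^*,\beta)$, and the reduction step concludes, with $V=B(x^*,\beta)\setminus L_\nu$.

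\emph{On $L_\nu$ and the main obstacle.} Note that this argument in fact yields \eqref{leb} on the whole ball $B(x^*,\beta)$ and does not really invoke $L_\nu$; the slice is precisely what one is forced to remove in the alternative ``L'H\^ospital-type'' route, where $\dist(x,S(x))$ and $\dist(x,K)$ are each approximated to first order in $x-x^*$ by $\nabla g(x^*)^{\top}(x-x^*)/\lV\nabla g(x^*)\rV$, and the resulting $0/0$ quotient is conclusive only while this common leading term stays bounded away from $0$ relative to $\lV x-x^*\rV$ — i.e. only for $x\notin L_\nu$. The genuinely delicate point, in either route, is the \emph{uniformity}: the estimate $g(z)<0$ (respectively, the smallness of the first-order remainders) must hold simultaneously for all $x\in B(x^*,\beta)\setminus K$, which is exactly where continuous differentiability of $g$ — that is, uniform continuity of $\nabla g$ on a compact neighborhood of $x^*$ — is indispensable; the Slater hypothesis is used only to guarantee $\nabla g(x^*)\neq 0$.
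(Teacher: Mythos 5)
Your proof is correct, but it takes a genuinely different route from the paper's, and in fact establishes a slightly stronger statement. The paper writes $\dist(x,K)=\lambda(x)\lV\nabla g(y(x))\rV$ via the KKT multiplier of the projection problem, expands both $g(x)$ and $\lambda(x)$ to first order in $t=\lV x-x^*\rV$ along a direction $d$, and ends up with a quotient of the form $\bigl(t\nabla g(x^*)^{\top}d+o(t)\bigr)/\bigl(t\nabla g(x^*)^{\top}d+o(t)\bigr)$ with \emph{different} remainders in numerator and denominator; this is conclusive only when $\lv\nabla g(x^*)^{\top}d\rv\ge\nu$, which is exactly why the slice $L_\nu$ is carved out of the ball. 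You instead upper-bound $\dist(x,K)$ by exhibiting the explicit feasible point $x-(1+\varepsilon)g(x)\lV d\rV^{-2}d$ and lower-bound $\dist(x,S(x))=g(x)/\lV\nabla g(x)\rV$ by $g(x)/(\lV d\rV+\eta(\beta))$; the factor $g(x)$ cancels exactly in the ratio, so no tangential degeneracy arises, \eqref{leb} holds on all of $U\cap B(x^*,\beta)$, and the conclusion with $V=B(x^*,\beta)\setminus L_\nu$ follows a fortiori. Your diagnosis that $L_\nu$ is an artifact of the L'H\^ospital-type route is accurate (one can also see the two-sided bound $g(x)/\lV\nabla g(x)\rV\le\dist(x,K)\le(1+\varepsilon)g(x)/\lV d\rV$ directly, the left inequality being just $K\subset S(x)$). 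What your approach buys is a more elementary argument — no differentiation of the multiplier map $\lambda(x)$, no one-sided derivatives $\phi'_+(0)$, $\sigma'_+(0)$ — and a stronger conclusion that would also render the subsequent \Cref{p9} unnecessary; what the paper's computation buys is the explicit asymptotics of $\lambda(x)$, which is of some independent interest. The only point in your write-up deserving a touch more care is that the segment $[x,z]$ may exit $B(x^*,\beta)$, so $\eta$ should be taken as the oscillation of $\nabla g$ over a slightly larger ball; this is harmless because $\lV x-z\rV=(1+\varepsilon)g(x)/\lV d\rV$ is bounded by a quantity that tends to $0$ with $\beta$, by continuity of $g$ and $g(x^*)=0$.
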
    

\begin{proof}
The fact that $0 < \nu<\lV\nabla g(x^*)\rV$ ensures that $V\ne\emptyset$.
We will prove that for $x$ close to $x^*$ the quotient $\dist(x,S(x))/\dist(x,K)$ approaches $1$,  
and first we proceed to evaluate $\dist(x,S(x))$. Note that when $x\in K\subset S(x)$, the inequality 
in \eqref{leb1} holds trivially because of A1. Thus, we assume that $x\notin K$, so that $x\notin S(x)$ 
by \Cref{pq1}, and hence $g(x)> 0$ and 
$S(x)=\{z\in\re^n:\nabla g(x)^{\top}(x-z)+g(x)\}$, implying, in view of \eqref{ee1}, that
\begin{equation}\label{aa35} 
\dist(x,S(x))=\lV x-P^S(x)\rV=\frac{g(x)}{\lV\nabla g(x)\rV}.
\end{equation}
Now we look for a more manageable expression for $\dist(x,K)=\lV x-P_K(x)\rV$. Let $y=P_K(x)$. So, $y$ is the unique solution of the
problem $\min\lV z-x\rV^2$ s.t. $g(z)\le 0$, whose first order optimality conditions, sufficient by convexity of $g$,
are 
\begin{equation}\label{ae35}
x-z=\lambda\nabla g(z) 
\end{equation}
with $\lambda\ge 0$, so that 
\begin{equation}\label{aa36} 
\dist(x,K)=\lV x-y\rV=\lambda\lV\nabla g(y)\rV. 
\end{equation}
Now we observe that the Slater condition implies that the right hand sides of both \eqref{aa35} and \eqref{aa36} are well defined:  
since $x\notin K$, $g(x)> 0$; since $y=P_K(x)\in\partial K$, $g(y)=0$. By the Slater condition, $g(x)>g(\hat x)$ and $g(y)>g(\hat x)$,
so that neither $x$ nor $y$ are minimizers of $g$, and hence both $\nabla g(y)$ and $\nabla g(x)$ are nonzero. By the same token, 
$\nabla g(x^*)\ne 0$, because $x^*$ is not a minimizer of $g$: being the limit of a sequence lying
outside $K$, $x^*$ belongs to the boundary of $K$, so that $g(x^*)=0>g(\hat x)$.  
 
From \eqref{aa35}, \eqref{aa36},
\begin{equation}\label{aa37}
\frac{\dist(x,S(x))}{\dist(x,K)}=\lV\nabla g(y(x))\rV\,\lV\nabla g(x)\rV\left[\frac{\lambda(x)}{g(x)}\right],
\end{equation}
where the notation $y(x),\lambda(x)$ emphasizes that both $y=P_K(x)$ and the multiplier $\lambda$ depend on $x$.

We look at the right hand side \eqref{aa37} for $x$ close to $x^*\in K$, in which case $y$, by continuity of $P_K$, approaches
$P(x^*)=x^*$, so that $\nabla g(y(x))$ approaches $\nabla g(x^*)\ne 0$, and hence, in view of \eqref{aa35}, 
$\lambda(x)$ approaches $0$. So, the product of the first two factors in the right hand side of \eqref{aa37} approaches $\lV\nabla g(x^*)\rV^2$, 
but the quotient is indeterminate, because both the numerator and the denominator approach $0$, requiring a more precise 
first order analysis. 

Expanding $g(x)$ around $x^*$ and taking into account that $g(x^*)=0$, we get 
\begin{equation}\label{aa38}
g(x)=\nabla g(x^*)(x-x^*)+o(\lV x-x^*\rV).
\end{equation}
Define $t=\lV x-x^*\rV, d=t^{-1}(x-x^*)$ so that $\lV d\rV=1$, and \eqref{aa39} becomes 
\begin{equation}\label{aa39}
g(x)=t\nabla g(x^*)^{\top}d+o(t).
\end{equation}
No we look at $\lambda(x)$. Let $\phi(t)=\lambda(x^*+td)$. Note that for $x\in\partial K$ we get $y(x)=x$, so that
$0=\lambda(x)\nabla g(x)$ and hence $\lambda(x)=0$. Thus, $\phi(0)=0$ and     
\begin{equation}\label{aa40}
\lambda(x)=\phi(t)=t\phi'_+(0)+o(t),
\end{equation}
where $\phi'_+(0)$ denotes the right derivative of $\phi(t)$ at $0$.
Since we assume that $x\notin K$, we have $y(x)\in\partial K$ and hence, using \eqref{ae35},  
\begin{equation}\label{aa41}
0=g(y(x))=g(x-\lambda(x)\nabla g(y(x)))=g(x^*+td-\phi(t)\nabla g(y(x^*+td)))
\end{equation}
for all $t>0$. Let $\sigma(t)=\phi(t)\nabla g(y(x^*+td))$, $\psi(t)=g(x^*+td-\sigma(t))$, so that \eqref{aa41} becomes
$0=\psi(t)=g(x^*+td-\sigma(t))$ for all $t>0$ and hence
\begin{equation}\label{aa42}
0=\psi'(t)=\nabla g(y(x^*+td))^{\top}(d-\sigma'(t))
\end{equation}
Taking limits in \eqref{aa42} with $t\to 0^+$, and noting that $y(x^*)=x^*$ because $x^*\in K$, we get
\begin{equation}\label{aa43}
0= \nabla g(x^*)^{\top}(d-\sigma'_+(0)),
\end{equation}
where $\sigma'_+(0)$ denotes the right derivative of $\sigma(t)$ at $0$.
We compute $\sigma'_+(0)$ directly from the definition, because we assume 
that $g$ is of class ${\cal C}^1$ but perhaps not of class ${\cal C}^2$.
Recalling that $\phi(0)=0$, we have that
\begin{equation}\label{aa44}
\sigma'_+(0)=\lim_{t\to 0^+}\frac{\phi(t)}{t}\nabla g(y(x^*+td))=
\lim_{t\to 0^+}\frac{\phi(t)}{t}\lim_{t\to 0^+}\nabla g(y(x^*+td))=\phi'_+(0)\nabla g(x^*),
\end{equation}
using the facts that $g$ is class ${\cal C} ^1$ and that $y(x^*)=x^*$. Replacing \eqref{aa44} in \eqref{aa43},
we get that $0=\nabla g(x^*)^{\top}(d-\phi'_+(0)\nabla g(x^*))$, and therefore 
\begin{equation}\label{aa45}
\phi'_+(0)=\frac{\nabla g(x^*)^{\top}d}{\lV\nabla g(x^*)\rV^2}.
\end{equation}
Using \eqref{aa40} and \eqref{aa45} we obtain
\begin{equation}\label{aa46}
\lambda(x)=\frac{t\nabla g(x^*)^{\top}d}{\lV\nabla g(x^*)\rV^2}+o(t)=\frac{1}{\lV\nabla g(x^*)\rV^2}[t\nabla g(x^*)^{\top}d +o(t)].
\end{equation}
Replacing \eqref{aa46} and \eqref{aa39} in \eqref{aa37}, we obtain
\begin{align}
\hspace*{-2em}\frac{\dist(x,S(x))}{\dist(x,K)}&=
\left[\frac{\lV\nabla g(y(x))\rV\,\lV\nabla g(x)\rV}{\lV \nabla g(x^*)\rV^2}\right]
\left[\frac{t\nabla g(x^*)^{\top}d +o(t)}{t\nabla g(x^*)^{\top}d +o(t)}\right]\\
&=
\left[\frac{\lV\nabla g(y(x^*+td))\rV\,\lV\nabla g(x^*+td)\rV}{\lV \nabla g(x^*)\rV^2}\right]
\left[\frac{\nabla g(x^*)^{\top}d +o(t)/t}{\nabla g(x^*)^{\top}d +o(t)/t}\right].\label{aa47}
\end{align}
Now we recall that we must check the inequality of \eqref{leb} only for points in $V$, and that $V\cap L_\nu=\emptyset$,
with $L_\nu=\{z\in \re^n:\nabla g(x^*)(z-x^*)\le\nu\lV z-x^*\rV\}$. 
So, for $x\in V$ we have $\lv\nabla g(x^*)^{\top}(x-x^*)\rv\ge\nu\lV x-x^*\rV$, which implies
$\lv\nabla g(x^*)^{\top}d\rv\ge\nu$, {\emph{i.e.}}, $\nabla g(x^*)^{\top}d$ is bounded away from $0$, independently of the direction $d$. 
In this situation, it is clear that the rightmost expression of \eqref{aa47} tends to $1$
when $t\to 0^+$, and so there exists some $\beta >0$ such that for $t\in (0,\beta)$ such expression is not smaller
than $\omega/\bar\omega$, with $\omega$ as in \eqref{leb} and $\bar\omega$ as in \eqref{leb1}. Without loss of generality,
we assume that $\beta\le\rho$, with $\rho$ as in Assumption \eqref{leb1}. Since $t=\lV x-x^*\rV$,
we have proved that for $x\in U\cap B(x^*,\beta)\setminus L_\nu=U\cap V$ it holds that
\begin{equation}\label{aa48}   
\frac{\dist(x,S(x))}{\dist(x,K)}\ge\frac{\omega}{\bar\omega}.
\end{equation}
It follows from \eqref{aa48} that 
\begin{equation}\label{aa49}
\dist(x,S(x))\ge \dist(x,K)\frac{\omega}{\bar\omega} 
\end{equation}
for all $x\in V\cap U$. Dividing both sides of \eqref{aa49} by $\dist(x,K\cap U)$, recalling that $\beta\le\rho$, and 
invoking Assumption \eqref{leb1}, we obtain 
\[
\frac{\dist(x,S(x))}{\dist(x,K\cap U)}\ge\frac{\dist(x,K)}{\dist(x,K\cap U)}\frac{\omega}{\bar\omega}\ge\bar\omega \frac{\omega}{\bar\omega}=\omega
\]
for all $x\in U\cap V$, thus proving that \eqref{leb} holds for any $\omega<\bar\omega$, with $V=B(x^*,\beta)\setminus L_\nu$, where
$\bar\omega$ is the local error bound for the sets $K,U$. 
\end{proof}

We have proved that for the prototypical separating operator given by \eqref{aa34}, the result of \Cref{p7} holds. 
In order to
obtain the convergence rate result of \Cref{t3} for this operator, we must prove that in this case the tail of the sequence 
$\{x^k\}_{k\in\na}$ generated
by CARM is contained in $V=B(x^*,\beta)\setminus L_\nu$. Note that $\beta$ depends on $\nu$. Next we will show that
if we take $\nu$ smaller than a certain constant which depends on $x^*$, the initial iterate $x^0$, the Slater point $\hat x$ 
and the parameter $\bar\omega$ of \eqref{leb1}, 
then the tail of the sequence $\{x^k\}_{k\in\na}$ will remain outside $L_\nu$. Clearly, this will suffice, 
because the sequence
eventually remains in any ball around its limit, which is $x^*$, so that its tail will surely be contained in $B(x^*,\beta)$. 
The fact that $x^k\notin L_\nu$ for large enough $k$ is
a consequence of the Fej\'er monotonicity of the sequence with respect to $K\cap U$, proved in \Cref{t1}.  
In the next proposition we will
prove that indeed $x^k\notin L_\nu$ for large enough $k$,
and so the result of \Cref{t3} holds for this separating operator.  

\begin{proposition}\label{p9} 
Let $g:\re^n\to\re$ be convex, of class ${\cal C}^1$ and such that there exists $\hat x\in\re^n$ satisfying
$g(\hat x)< 0$. Take $K=\{x\in\re^n: g(x)\le 0\}$. Assume that $K,U$ satisfy \eqref{leb1}. 
Consider the separating
operator $S$ defined in \eqref{aa34}. Let $\{x^k\}_{k\in\na}$ be a sequence generated by (CARM) with starting 
point $x^0\in U$ and limit point
$x^*\in K\cap U$. Take $\nu >0$ satisfying
\begin{equation}\label{eio}
\nu <\min\left\{\frac{\bar\omega\lv g(\hat x)\rv}{4\left(\lV\hat x-x^*\rV+\lV x^*-x^0\rV\right)},\frac{\lV\nabla g(x^*)\rV}{2}\right\},
\end{equation}
with $\bar\omega$ as in \eqref{leb1}, and define 
\[
L_\nu:=\{z\in\re^n:\lv\nabla g(x^*)^{\top}(z-x^*)\rv\le\nu\lV z-x^*\rV\}.
\] 
Then, there exists $k_0$ such that for all
$k\ge k_0$, $x_k\in B(x^*,\beta)\setminus L_\nu$, with $\beta$ as in \Cref{p8}.
\end{proposition}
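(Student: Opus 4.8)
The goal is to show that the CARM iterates eventually leave the "slice" $L_\nu$, i.e., that the angle between $\nabla g(x^*)$ and $x^k-x^*$ stays bounded away from $\pi/2$. The plan is to exploit Fej\'er monotonicity of $\{x^k\}_{k\in\na}$ with respect to $K\cap U$ (\Cref{t1}) together with the Slater condition, following the heuristic described before \Cref{p8}: a direction $d=(x^k-x^*)/\lV x^k-x^*\rV$ lying in $L_\nu$ is almost tangent to $K$ at $x^*$, hence almost normal to $\nabla g(x^*)$; but such a direction would allow the iterate $x^k$ to be "pulled back" towards the Slater point $\hat x$ while still decreasing the distance to $K\cap U$, contradicting the fact that the Fej\'er decrease is already controlled by the starting point.

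\textbf{Step 1: quantify the Slater margin.} First I would use convexity of $g$ together with $g(\hat x)<0$ and $g(x^*)=0$ to get a lower bound of the form $\nabla g(x^*)^{\top}(x^*-\hat x)\ge \lv g(\hat x)\rv$ (the subgradient inequality at $x^*$ applied to $\hat x$). This says $\nabla g(x^*)$ has a definite positive inner product with the direction pointing from $x^*$ towards the interior point $\hat x$. Combining with $\nabla g(x^*)\ne 0$ (established in \Cref{p8}), this is the geometric fact that makes the "bad" tangential directions incompatible with moving towards $\hat x$.

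\textbf{Step 2: translate $x^k\in L_\nu$ into a bound on $\nabla g(x^*)^{\top}(x^k-x^*)$.} If $x^k\in L_\nu$ then $\lv\nabla g(x^*)^{\top}(x^k-x^*)\rv\le\nu\lV x^k-x^*\rV$. I would then estimate $\nabla g(x^*)^{\top}(x^k-\hat x)=\nabla g(x^*)^{\top}(x^k-x^*)+\nabla g(x^*)^{\top}(x^*-\hat x)$; the second term is $\ge\lv g(\hat x)\rv$ by Step 1, and the first is small, so $\nabla g(x^*)^{\top}(x^k-\hat x)$ stays bounded below by roughly $\lv g(\hat x)\rv/2$ once $\nu\lV x^k-x^*\rV$ is small — which it is, since $x^k\to x^*$. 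Hence $g(x^k)\ge g(\hat x)+\nabla g(x^*)^{\top}\cdot 0\dots$ — more precisely I would use convexity to bound $g(x^k)$ from below and also control $\lV\nabla g(x^*)\rV$ via the constraint $\nu<\lV\nabla g(x^*)\rV/2$. The upshot of this step should be an inequality showing that a point in $L_\nu$ close to $x^*$ is "far" from $K$ in the direction of $\nabla g(x^*)$ relative to $\lV x^k-x^*\rV$, contradicting that $\dist(x^k,K)\to 0$ at the same rate — OR, more robustly, I would instead run the Fej\'er argument of Step 3.

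\textbf{Step 3: the Fej\'er contradiction.} By \Cref{t1}, $\lV x^{k}-z\rV$ is nonincreasing for every $z\in K\cap U$; in particular $\lV x^k-P_{K\cap U}(x^0)\rV\le\lV x^0-P_{K\cap U}(x^0)\rV$, so the whole sequence stays in a ball whose radius is controlled by $\lV x^0-x^*\rV$ (and $\lV\hat x-x^*\rV$ enters when comparing to $\hat x$). The precise threshold \eqref{eio} for $\nu$, with denominator $4(\lV\hat x-x^*\rV+\lV x^*-x^0\rV)$, is exactly what is needed so that if $x^k\in L_\nu$ for infinitely many $k$, then by Step 2 the quantity $\nabla g(x^*)^{\top}(x^k-x^*)$ is controlled well enough that one can produce a point $\tilde z\in K\cap U$ (a suitable point on the segment from $x^k$ towards the Slater direction, projected appropriately) with $\lV x^{k+1}-\tilde z\rV>\lV x^k-\tilde z\rV$ or with $\dist(x^k,K\cap U)$ not going to zero — contradicting either Fej\'er monotonicity or \Cref{t1}. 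Since $x^k\to x^*$, for $k\ge k_0$ we have $x^k\in B(x^*,\beta)$ automatically, so combining this with $x^k\notin L_\nu$ gives $x^k\in B(x^*,\beta)\setminus L_\nu=V$.

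\textbf{Main obstacle.} The delicate point is making Step 3 rigorous: turning the geometric picture ("a tangential approach direction contradicts Fej\'er monotonicity with respect to a point near $\hat x$") into a clean inequality with the explicit constant in \eqref{eio}. One must be careful that $\hat x$ itself need not lie in $U$, so one cannot simply take $z=\hat x$ in the Fej\'er inequality; instead the argument has to go through $\dist(x^k,K)$ versus $\dist(x^k,K\cap U)$ using \eqref{leb1}, and then relate $\dist(x^k,K)$ to the linear functional $\nabla g(x^*)^{\top}(\cdot-x^*)$ via the first-order expansion of $g$ (as in \Cref{p8}). Reconciling the $o(\lV x^k-x^*\rV)$ error terms with the explicit constant $\bar\omega\lv g(\hat x)\rv/(4(\cdots))$ is where the real bookkeeping lies; everything else is a direct consequence of results already proved.
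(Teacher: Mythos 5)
There is a genuine gap, and in fact the geometric tension at the heart of the argument is stated backwards in your Step 2. Membership $x^k\in L_\nu$ makes the normal component $\lv\nabla g(x^*)^{\top}(x^k-x^*)\rv$ small, which (via the gradient inequality at $x^k$ and continuity of $\nabla g$) forces $g(x^k)$, and hence $\dist(x^k,K)$, to be \emph{small} relative to $\lV x^k-x^*\rV$ --- not ``far from $K$'' as you write. The contradiction is with \eqref{leb1} plus Fej\'er monotonicity, which force $\dist(x^k,K)$ to be \emph{large} relative to $\lV x^k-x^*\rV$: by \Cref{p1}(iii), $\lV x^k-x^*\rV\le 2\dist(x^k,K\cap U)\le 2\dist(x^k,K)/\bar\omega$. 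Your Step 3 then proposes an entirely different (and unworkable as stated) mechanism --- constructing a point $\tilde z\in K\cap U$ whose distance to the iterates increases --- whereas Fej\'er monotonicity is only needed for the two elementary bounds just quoted and for $\lV y^k-x^*\rV\le\lV x^k-x^*\rV\le\lV x^0-x^*\rV$ with $y^k=P_K(x^k)$. Finally, you explicitly defer the ``real bookkeeping'' of reconciling $o(\lV x^k-x^*\rV)$ terms with the explicit constant in \eqref{eio}; that reconciliation is precisely the content of the proof, so as written the proposal does not establish the statement.

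The paper's argument avoids Taylor expansions entirely and sandwiches $g(x^k)$ using exact convexity inequalities. Assuming $x^k\in L_\nu$, the gradient inequality at $x^k$ gives the upper bound $g(x^k)\le\left(\lV\nabla g(x^*)-\nabla g(x^k)\rV+\nu\right)\lV x^k-x^*\rV\le 2\left(\lV\nabla g(x^*)-\nabla g(x^k)\rV+\nu\right)\dist(x^k,K)/\bar\omega$. The gradient inequality at $y^k=P_K(x^k)$, together with the collinearity of $x^k-y^k$ and $\nabla g(y^k)$, gives the lower bound $g(x^k)\ge\lV\nabla g(y^k)\rV\dist(x^k,K)$; and the Slater inequality applied at $y^k$ (not at $x^*$, as in your Step 1) yields $\lV\nabla g(y^k)\rV\ge\lv g(\hat x)\rv/\left(\lV\hat x-x^*\rV+\lV x^*-x^0\rV\right)$. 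Dividing through by $\dist(x^k,K)$ and invoking \eqref{eio} produces $\nu<\lV\nabla g(x^k)-\nabla g(x^*)\rV$, which is impossible for large $k$ since $x^k\to x^*$ and $g$ is of class ${\cal C}^1$; hence $x^k\notin L_\nu$ eventually, and $x^k\in B(x^*,\beta)$ eventually by convergence. You correctly identified all the ingredients (Fej\'er monotonicity, the Slater margin, \eqref{leb1}, the role of the constant in \eqref{eio}), but the way you have arranged them does not produce the required contradiction.
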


\begin{proof}
Assume that $x^k\in L_\nu$, {\emph{i.e.}},
\begin{equation}\label{aa50}
\lv\nabla g(x^*)^{\top}(x^k-x^*)\rv\le\nu\lV x^k-x^*\rV. 
\end{equation} 
Using the gradient inequality, the fact that $g(x^*)=0$ and \eqref{aa50}, we obtain
\begin{align}
g(x^k) & \le g(x^*)-\nabla g(x^k)^{\top}(x^*-x^k) \\
& =[\nabla g(x^*)-\nabla g(x^k) -\nabla g(x^*)]^{\top}(x^*-x^k)
\\
& \leq \lV\nabla g(x^*)-\nabla g(x^k)\rV\,\lV x^*-x^k\rV +\lv\nabla g(x^*)^{\top}(x^k-x^*)\rv \\
& \le
\left(\lV\nabla g(x^*)-\nabla g(x^k)\rV+\nu\right)\lV x^k-x^*\rV.\label{aa51}
\end{align}

By \Cref{t1}, $\{x^k\}_{k\in\na}$ is Fej\'er monotone with respect to $K\cap U$. Thus, we use \Cref{p1}(iii) and \eqref{leb1} in
\eqref{aa51}, obtaining
\begin{align}
g(x^k)& \le 
2\left(\lV\nabla g(x^*)-\nabla g(x^k)\rV+\nu\right)\dist(x^k,K\cap U)\\& \le\frac{2\left(\lV\nabla g(x^*)-\nabla g(x^k)\rV+\nu\right)\dist(x^k,K)}{\bar\omega}.\label{aa52}
\end{align}
Denote $y^k=P_K(x^k)$. Using again the gradient inequality, together with the facts that $g(y^k)=0$ and that $x^k-y^k$ and $\nabla g(y^k)$
are collinear, which is a consequence of \eqref{ae35} and the nonnegativity of $\lambda$, we get from \eqref{aa52}
\begin{align}
g(x^k)& \ge g(y^k)+\nabla g(y^k)^{\top}(x^k-y^k)\\ 
&=\lV\nabla g(y^k)\rV\,\lV x^k-y^k\rV=\lV\nabla g(y^k)\rV \dist(x^k,K).\label{aa53}
\end{align}
Now we use the Slater assumption on $g$ for finding a lower bound for $\lV\nabla g(y^k)\rV$. Take $\hat x$ such that $g(\hat x) <0$,
and apply once again the gradient inequality.
\begin{equation}\label{aa54}
g(\hat x)\ge g(y^k)+\nabla g(y^k)^{\top}(\hat x-y^k)=\nabla g(y^k)^{\top}(\hat x-y^k)\ge-\lV\nabla g(y^k)\rV\,\lV\hat x-y^k\rV.
\end{equation}
Multiplying \eqref{aa54} by $-1$, we get
\begin{align}
\lv g(\hat x)\rv & \le\lV\nabla g(y^k)\rV\,\lV\hat x-y^k\rV\le\lV\nabla g(y^k)\rV\left(\lV\hat x-x^*\rV+\lV x^*-y^k\rV\right)
\\ & \le
\lV\nabla g(y^k)\rV\left(\lV\hat x-x^*\rV+\lV x^*-x^k\rV\right)\\ 
&\le\lV\nabla g(y^k)\rV\left(\lV\hat x-x^*\rV+\lV x^*-x^0\rV\right),\label{aa55}
\end{align}
using the facts that $y^k=P_K(x^k)$ and that $x^*\in K$ in the third inequality and the F\'ejer monotonicity of $\{ x^k\}_{k\in\na}$ with 
respect to $K\cap U$ in the fourth one.
Now, since $\lim_{k\to\infty} x^k=x^*$, 
there exists $k_1$ such that $\lV x^k-x^*\rV\le\rho$ for $k\ge k_1$, with $\rho$ as in \eqref{leb1}. 
So, in view of \eqref{aa55}, with $k\ge k_1$,
$\lv g(\hat x)\rv\le\lV\nabla g(y^k)\rV(\lV\hat x-x^*\rV+\lV x^*-x^0\rV)$, implying that
\begin{equation}\label{aa56}
\lV\nabla g(y^k)\rV\ge\frac{\lv g(\hat x)\rv}{\lV\hat x-x^*\rV+\lV x^*-x^0\rV}.
\end{equation}
Combining \eqref{aa52}, \eqref{aa53}, \eqref{aa56} and \eqref{eio}, we obtain
\[
2\nu<\frac{\bar\omega\lv g(\hat x)\rv}{2\left(\lV\hat x-x^*\rV+\lV x^*-x^0\rV\right)}\le\lV\nabla g(x^k)-\nabla g(x^*)\rV+\nu,
\]
implying
\begin{equation}\label{aa57}
\nu <\lV\nabla g(x^k)-\nabla g(x^*)\rV.
\end{equation}
 
The inequality in \eqref{aa57} has been obtained by assuming that $x^k\in L_\nu$. Now, since $\lim_{k\to\infty}x^k=x^*$ and
$g$ is of class ${\cal C}^1$, there exists $k_0\ge k_1$ such that $\lV\nabla g(x^*)-\nabla g(x^k)\rV\le\nu$ for
$k\ge k_0$, and hence \eqref{aa57} implies that for $k\ge k_0$, $x^k\notin L_\nu$. Since $k_0\ge k_1$, $x^k\in B(x^*,\beta)$
for $k\ge k_0$, meaning that when $k\ge k_0$, $x^k\in B(x^*,\beta)\setminus L_\nu$, establishing the result.
\end{proof}

Now we conclude the analysis of CARM with the prototypical separating operator, proving that under smoothness of $g$ and a 
Slater condition, the CARM method achieves linear convergence with precisely the same bound for the asymptotic constant
as CRM, thus showing that the approximation of $P_K$ by $P^S$ produces no deterioration in the convergence rate. We emphasize
again that for this  operator $S$, $P_S$ has an elementary closed formula, namely the one given by 
\[
P^S(x)= x-\left(\frac{\max\{0,g(x)\}}{\lV\nabla g(x)\rV^2}\right)\nabla g(x). 
\]

\begin{theorem}\label{t4}
Let $g:\re^n\to\re$ be convex, of class ${\cal C}^1$ and such that there exists $\hat x\in\re^n$ satisfying
$g(\hat x)< 0$. Take $K=\{x\in\re^n: g(x)\le 0\}$. Assume that $K,U$ satisfy \eqref{leb1}. 
Consider the separating
operator $S$ defined in \eqref{aa34}. Let $\{x^k\}_{k\in\na}$ be a sequence generated by CARM with starting 
point $x^0\in U$. Then $\{x^k\}_{k\in\na}$ converges to some $x^*\in K\cap U$ with linear convergence rate, and
asymptotic constant bounded above by $\sqrt{{(1-\bar\omega^2)}/{(1+\bar\omega^2)}}$, with $\bar\omega$ as in \eqref{leb1}.
\end{theorem}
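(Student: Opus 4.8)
The plan is to obtain \Cref{t4} as the specialization of the general rate theorem \Cref{t3} to the prototypical separating operator \eqref{aa34}: for this operator the local error bound \eqref{leb} and the membership of the tail of the CARM sequence in the relevant set $V$ have already been established in \Cref{p8,p9}, so the remaining task is to assemble these facts and then to pass to the limit in the error bound parameter.

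I would first invoke \Cref{t1} to get that $\{x^k\}_{k\in\na}$ is well defined, contained in $U$, Fej\'er monotone with respect to $K\cap U$, and convergent to some $x^*\in K\cap U$. Following the remark preceding \eqref{leb1}, one may assume $x^k\notin K$ for all $k$, since otherwise the sequence is eventually constant and there is nothing to prove about rates; then $x^*\in\bound K$, $g(x^*)=0$, and by the Slater hypothesis $\nabla g(x^*)\ne 0$ (this is argued inside the proof of \Cref{p8}). Next I would fix $\nu>0$ satisfying \eqref{eio}; note that the right-hand side of \eqref{eio} depends only on $\bar\omega$, $g(\hat x)$, $\hat x$, $x^*$ and $x^0$, and that \eqref{eio} forces $\nu<\lV\nabla g(x^*)\rV$, so $\nu$ is admissible in \Cref{p8}.

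Then, for an arbitrary fixed $\omega\in(0,\bar\omega)$, \Cref{p8} supplies $\beta>0$ such that $K$, $U$ and $S$ satisfy \eqref{leb} with parameter $\omega$ and set $V:=B(x^*,\beta)\setminus L_\nu$, where $L_\nu=\{z\in\re^n:\lv\nabla g(x^*)^{\top}(z-x^*)\rv\le\nu\lV z-x^*\rV\}$; and \Cref{p9}, applied with the same $\nu$ and the same $\beta$, yields $k_0$ with $x^k\in B(x^*,\beta)\setminus L_\nu=V$ for all $k\ge k_0$. Thus the hypotheses of \Cref{t3} are in force, and \Cref{t3} gives that $\{x^k\}_{k\in\na}$ converges R-linearly to $x^*\in K\cap U$ with asymptotic constant at most $\sqrt{(1-\omega^2)/(1+\omega^2)}$. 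Since the asymptotic constant $r=\limsup_{k\to\infty}\lV x^k-x^*\rV^{1/k}$ is intrinsic to the sequence, this estimate holds for every $\omega\in(0,\bar\omega)$, and the map $\omega\mapsto\sqrt{(1-\omega^2)/(1+\omega^2)}$ is continuous and strictly decreasing on $(0,1)$, letting $\omega\uparrow\bar\omega$ yields $r\le\sqrt{(1-\bar\omega^2)/(1+\bar\omega^2)}$, which is the assertion.

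I do not expect any single estimate to be the obstacle here, since the substantive analysis is already packaged in \Cref{p8,p9}. The points needing care are organizational: choosing $\nu$ before $\beta$, ensuring that the $\beta$ produced by \Cref{p8} is exactly the one referenced by \Cref{p9}, and keeping $\omega<\bar\omega$ genuinely arbitrary so that the final monotone-limit step — the one new, if elementary, ingredient beyond citing the earlier results — legitimately recovers the sharp constant $\sqrt{(1-\bar\omega^2)/(1+\bar\omega^2)}$.
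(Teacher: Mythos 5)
Your proposal is correct and follows essentially the same route as the paper: convergence from \Cref{t1}, then \Cref{p8} to get \eqref{leb} with any $\omega<\bar\omega$ on a suitable $V$, \Cref{p9} to place the tail of the sequence in $V$, \Cref{t3} for the rate $\sqrt{(1-\omega^2)/(1+\omega^2)}$, and finally letting $\omega\uparrow\bar\omega$. Your added care about fixing $\nu$ first and matching the $\beta$ of \Cref{p8} with that of \Cref{p9} only makes explicit what the paper leaves implicit.
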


\begin{proof}
The fact that $\{x^k\}_{k\in\na}$ converges to some $x^*\in K\cap 1$ follows from \Cref{t1}. Let $\bar\omega$ be the parameter
in \eqref{leb1}.
By \Cref{p8},
$P,K$ and $S$ satisfy \eqref{leb} with any parameter $\omega\le\bar\omega$ and a suitable $V$. By \Cref{p9} $x^k\in V$ for large 
enough $k$, so that the assumptions of \Cref{t3} hold, and hence
\begin{equation}\label{aa58}
\limsup_{k\to\infty}\frac{\lV x^{k+1}-x^*\rV}{\lV x^k-x^*\rV}\le\sqrt{\frac{1-\omega^2}{1+\omega^2}}
\end{equation}
for any $\omega\le\bar\omega$. Taking infimum in the right hand side of \eqref{aa58} with $\omega <\bar\omega$, 
we conclude that the inequality holds also for $\bar\omega$, {\emph{i.e.}}
\[
\limsup_{k\to\infty}\frac{\lV x^{k+1}-x^*\rV}{\lV x^k-x^*\rV}\le\sqrt{\frac{1-\bar\omega^2}{1+\bar\omega^2}},
\] 
completing the proof.
\end{proof}

We mention that the results of \Cref{p8,p9} and \Cref{t4} can be extended without any complications to 
the separating
operator $\mathbf{S}$ in \Cref{ex2}, so that they can be applied for accelerating SiPM for CFP with $m$ convex sets, presented 
as $0$-sublevel sets of smooth convex functions. We omit the details.

Let us continue with a comment on the additional assumptions on $g$ used for proving \Cref{t4}, namely continuous differentiability
and the Slater condition. We guess that the second one is indeed needed for the validity of the result; regarding smoothness of $g$,
we conjecture that the CARM sequence still converges linearly under \eqref{leb} when $g$ is not smooth, but with an asymptotic constant
possibly larger than the one for CRM. It seems clear that the proof of such result requires techniques quite different from those used here. 

Finally, we address the issue of the extension of the results in this paper to the framework of infinite dimensional Hilbert spaces. We have
refrained from developing our analysis in such a framework because our
main focus lies in the extension of the convergence rate results for the exact algorithms presented in \cite{Arefidamghani:2021} to the approximate methods introduced in this paper, so that in order to establish the appropriate comparisons between the exact and approximate
methods one should start by rewriting the results of  \cite{Arefidamghani:2021} in the context of Hilbert spaces, which would 
unduly extend the length of this paper. We just comment that it is possible to attain such aim, following the approach presented 
in \cite{Bauschke:2021c, Bauschke:2020c}. 

\section{Convergence rate results for CARM and MAAP applied to specific instances of CFP}\label{s4}

The results of \Cref{s3} indicate that when $K,U$ satisfy an error bound assumption, 
both CARM  and MAAP enjoy linear convergence rates (with a better asymptotic constant for the 
former). In this section we present two families of CFP instances 
for which the difference between CARM and MAAP is more dramatic: using the prototypical separating operator, 
in the first one (for which \eqref{leb}
does not hold), MAAP converges sublinearly and CARM converges linearly; in the second one, MAAP converges linearly, 
as in \Cref{s3}, but CARM converges superlinearly. Similar results on the behavior of MAP and CRM 
for these two families can be found in \cite{Arefidamghani:2021}.

Throughout this section, $K\subset\re^{n+1}$ will be the epigraph of a convex function $f:\re^n\to\re$ of class 
${\cal C}^1$ and $U$ will be the hyperplane $U:=\{x\in\re^{n+1}:x_{n+1}=0\}$. We mention that the specific form of $U$ 
and the fact that $K$ is an epigraph entail little loss of generality; but the smoothness assumption on $f$ and the
fact that $U$ is a hyperplane (\emph{i.e.} an affine manifold of codimension $1$), are indeed more restrictive.

First we look at the case when the following assumptions hold:
\begin{listi}
\item[B1.] $f(0)=0$.
\item[B2.] $\nabla f(x)=0$ if and only if $x=0$.
\end{listi} 
Note that under B1--B2, $0$ is the unique minimizer of $f$ and that $K\cap U=\{0\}$. It follows from \Cref{t1}
that the sequences generated by MAAP and CARM, from any initial iterate in $\re^n$ and $U$ respectively, converge to $x^*=0$.
We prove next that under these assumptions MAAP converges sublinearly.

\begin{proposition}\label{p10} 
Assume that $K\subset\re^{n+1}$ is the epigraph of a convex function $f:\re^n\to\re$ 
of class ${\cal C}^1$ satisfying
B1--B2, and $U:=\{x\in\re^{n+1}:x_{n+1}=0\}$. Consider the separating operator given by \eqref{aa34} for the function $g:\re^{n+1}\to\re$
defined as $g(x_1, \dots ,x_{n+1})=f(x_1, \dots ,x_n)-x_{n+1}$. Then the sequence $\{x^k\}_{k\in\na}$ generated by MAAP starting at any
$x^0\in\re^{n+1}$ converges sublinearly to $x^*=0$.
\end{proposition}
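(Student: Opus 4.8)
The plan is to write the MAAP iteration explicitly on the slice $U$, where it becomes a gradient-type recursion, and then to bound from below the ratio of consecutive errors by a quantity tending to $1$.

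First I would reduce to $U$. Since $T^S = P_U \circ P^S$ has range contained in $U$, one has $x^k \in U$ for every $k \ge 1$; as the asymptotic constant depends only on the tail, we may assume $x^0 \in U$ and write $x^k = (\bar x^k, 0)$ with $\bar x^k \in \re^n$. By \Cref{t1} the sequence converges, and under B1--B2 its limit is $x^* = 0$; we assume $x^k \notin K$ for all $k$, since otherwise the sequence is eventually constant. For $x = (\bar x, 0) \in U$ with $\bar x \ne 0$, B1--B2 force $f(\bar x) > f(0) = 0$, so $g(x) = f(\bar x) > 0$ and $x \notin S(x)$; using $\nabla g(x) = (\nabla f(\bar x), -1)$ in the closed formula for $P^S$ recalled just before \Cref{t4}, and then applying $P_U$ (which zeroes the last coordinate), one gets
\[
\bar x^{k+1} = \bar x^k - t_k\,\nabla f(\bar x^k), \qquad t_k := \frac{f(\bar x^k)}{\norm{\nabla f(\bar x^k)}^2 + 1} \in \bigl(0,\, f(\bar x^k)\bigr].
\]

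Next I would estimate $\norm{\bar x^{k+1}}/\norm{\bar x^k}$ from below. The triangle inequality and $t_k \le f(\bar x^k)$ give $\norm{\bar x^{k+1}} \ge \norm{\bar x^k} - t_k\norm{\nabla f(\bar x^k)} \ge \norm{\bar x^k} - f(\bar x^k)\,\norm{\nabla f(\bar x^k)}$, while convexity of $f$ together with $f(0) = 0$ yields $0 = f(0) \ge f(\bar x^k) - \scal{\nabla f(\bar x^k)}{\bar x^k}$, hence $f(\bar x^k) \le \scal{\nabla f(\bar x^k)}{\bar x^k} \le \norm{\nabla f(\bar x^k)}\,\norm{\bar x^k}$. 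Combining these, for every $k$ with $\bar x^k \ne 0$,
\[
\frac{\norm{\bar x^{k+1}}}{\norm{\bar x^k}} \ge 1 - \norm{\nabla f(\bar x^k)}^2 .
\]

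Finally, since $f$ is of class ${\cal C}^1$, $\bar x^k \to 0$, and $\nabla f(0) = 0$ by B2, the right-hand side converges to $1$; as $\norm{x^k - x^*} = \norm{\bar x^k}$ and $x^k \ne x^*$ for all $k$, this gives $\liminf_{k\to\infty}\norm{x^{k+1} - x^*}/\norm{x^k - x^*} \ge 1$, so the asymptotic constant $q$ of \Cref{d1} satisfies $q \ge 1$, i.e.\ the convergence is sublinear --- indeed $q = 1$, since Fejér monotonicity with respect to $K \cap U = \{0\}$ (\Cref{t1}) forces every ratio to be $\le 1$. The computation itself is routine; the one load-bearing point is that the lower bound $1 - \norm{\nabla f(\bar x^k)}^2$ genuinely tends to $1$, which is precisely where B2 (equivalently $\nabla f(0) = 0$) and the ${\cal C}^1$ hypothesis are used: without B2, $\norm{\nabla f(\bar x^k)}$ would stay bounded away from $0$, pointing instead to the linear regime analyzed for the second family.
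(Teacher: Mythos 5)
Your proposal is correct and follows essentially the same route as the paper: both derive the explicit recursion $\bar x^{k+1} = \bar x^k - \frac{f(\bar x^k)}{\norm{\nabla f(\bar x^k)}^2+1}\nabla f(\bar x^k)$ on $U$ and then show the ratio $\norm{\bar x^{k+1}}/\norm{\bar x^k}\to 1$ using $\nabla f(0)=0$. The only cosmetic difference is that the paper expands the squared ratio exactly and passes to the limit via $f(x)=o(\norm{x})$, while you sandwich the ratio between $1-\norm{\nabla f(\bar x^k)}^2$ and the Fej\'er bound $1$; both are valid.
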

\begin{proof}
Convergence of $\{x^k\}_{k\in\na}$ to $x^*=0$ results from \Cref{t1}. 
We write vectors in $\re^{n+1}$ as $(x,s)$ with $x\in\re^n,s\in\re$. We start by computing the formula for $T^S(x,0)$. 
By definition of $g$, $\nabla g(x,s)=(\nabla f(x),-1)^{\top}$. Let 
\begin{equation}\label{uu1}
\alpha(x) =\lV\nabla f(x)\rV^2+1.
\end{equation} 
By \eqref{ee1}, 
\[
P^S(x,0)=(x,0)-\frac{g(x,0)}{\lV\nabla g(x,0)\rV^2}\nabla g(x,0)= \left(x-\frac{f(x)}{\alpha(x)}\nabla f(x),-\frac{f(x)}{\alpha(x)}\right),
\]
which implies, since $P_U(x,s)=(x,0)$,
\begin{equation}\label{u1}
T^S(x,0)=P_U(P^S(x))=\left(x-\frac{f(x)}{\alpha(x)}\nabla f(x),0\right)
\end{equation}
Let $\bar x=\lV x\rV^{-1}x$. From \eqref{u1},
\begin{equation}\label{u2}
\left[\frac{\lV T^S(x,0)\rV}{\lV(x,0)\rV}\right]^2=
1-2\frac{f(x)}{\lV x\rV}\left(\frac{\nabla f(x)^{\top}\bar x}{\alpha(x)}\right)+\left(\frac{f(x)}{\lV x\rV}\frac{\lV\nabla f(x)\rV}{\alpha(x)}\right)^2.
\end{equation}
Note that $\lim_{x\to 0}\alpha (x)=\alpha(0)=1$ and that, by B1--B2, $\lim_{x\to 0}\nabla f(x)=\nabla f(0)=0$, $f(x)=o(\lV x\rV)$,
implying that $\lim_{x\to 0}f(x)/\lV x\rV=0$, and conclude from \eqref{u2} that
\begin{equation}\label{u3} 
\lim_{x\to 0}\frac{\lV T^S(x,0)\rV}{\lV(x,0)\rV}=1.
\end{equation}
Now, since $x^{k+1}=T^S(x^k)$, $x^k\in U$ for all $k\ge 0$, and $x^*=0$, we get from \eqref{u3}
\[
\lim_{k\to\infty}\frac{\lV x^{k+1}-x^*\rV}{\lV x^k-x^*\rV}=\lim_{x\to 0}\frac{\lV T^S(x,0)\rV}{\lV(x,0)\rV}=1,
\]   
and hence $\{x^k\}_{k\in\na}$ converges sublinearly.
\end{proof}

Next we study the CARM sequence in the same setting.

\begin{proposition}\label{p11} 
Assume that $K\subset\re^{n+1}$ is the epigraph of a convex function $f:\re^n\to\re$ 
of class ${\cal C}^1$ satisfying
\emph{B1--B2}, and $U:=\{x\in\re^{n+1}:x_{n+1}=0\}$. Consider the separating operator given by \eqref{aa34} for the function $g:\re^{n+1}\to\re$
defined as $g(x_1, \dots ,x_{n+1})=f(x_1, \dots ,x_n)-x_{n+1}$.
For $0\ne x\in\re^n$, define 
\begin{equation}\label{u4}
\theta(x):=\frac{f(x)}{\lV x\rV\,\lV\nabla f(x)\rV}.
\end{equation}
Then
\begin{equation}\label{u5}
\left[\frac{\lV C^S(x,0)\rV}{\lV x\rV}\right]^2\le 1-\theta(x)^2,
\end{equation}
with $C^S$ as in \eqref{aa7}.
\end{proposition}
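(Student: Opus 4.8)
The plan is to compute $C^S(x,0)$ explicitly using \Cref{l1}, which tells us that $C^S(x,0) = P_{H(x,0)}(x,0)$ whenever $(x,0) \in U$, where $H(x,0)$ is the half-space appearing in \eqref{aa13}. First I would write down $H(x,0)$ concretely: since $g(x,0) = f(x) > 0$ for $0 \neq x$ (by B1--B2, as $0$ is the unique minimizer of $f$), we have $\nabla g(x,0) = (\nabla f(x), -1)^\top$ and, using \eqref{aa35}, $P^S(x,0) = (x,0) - \frac{f(x)}{\alpha(x)}(\nabla f(x),-1)^\top$ with $\alpha(x) = \lV \nabla f(x)\rV^2 + 1$. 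The defining normal direction of $H(x,0)$ is $(x,0) - P^S(x,0)$, which is a positive multiple of $(\nabla f(x), -1)^\top$, so $H(x,0)$ is precisely $\{(z,t): \nabla f(x)^\top z - t \le \nabla f(x)^\top x - f(x)\}$ — a half-space with the same normal $(\nabla f(x),-1)^\top$.

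Next I would project the origin onto $H(x,0)$ using formula \eqref{ee1}. Since $(0,0) \notin H(x,0)$ (the constraint value at the origin is $0$, while the right-hand side is $\nabla f(x)^\top x - f(x)$, which is nonnegative by the gradient inequality $f(0) \ge f(x) + \nabla f(x)^\top(0 - x)$, i.e. $\nabla f(x)^\top x - f(x) \ge f(0) = 0$, with strict inequality generically), the projection moves the origin along $(\nabla f(x),-1)^\top$. Concretely,
\[
C^S(x,0) = P_{H(x,0)}(0,0) = \left(0,0\right) - \frac{-(\nabla f(x)^\top x - f(x))}{\lV (\nabla f(x),-1)\rV^2}(\nabla f(x), -1)^\top = \frac{\nabla f(x)^\top x - f(x)}{\alpha(x)}(\nabla f(x),-1)^\top.
\]
Then I would take norms: $\lV C^S(x,0)\rV^2 = \frac{(\nabla f(x)^\top x - f(x))^2}{\alpha(x)^2}\,\lV(\nabla f(x),-1)\rV^2 = \frac{(\nabla f(x)^\top x - f(x))^2}{\alpha(x)}$, since $\lV(\nabla f(x),-1)\rV^2 = \alpha(x)$.

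The final step is to show $\lV C^S(x,0)\rV^2 \le \lV x\rV^2(1 - \theta(x)^2)$, i.e. $\frac{(\nabla f(x)^\top x - f(x))^2}{\alpha(x)} \le \lV x\rV^2 - \frac{f(x)^2}{\lV\nabla f(x)\rV^2}$. I would dispatch this by Cauchy--Schwarz: writing $a = \nabla f(x)^\top x$, $b = f(x)$, $p = \lV x\rV$, $q = \lV\nabla f(x)\rV$, we have $|a| \le pq$, and the claim becomes $(a-b)^2 \le (q^2+1)(p^2 - b^2/q^2) = (q^2+1)p^2 - b^2 - b^2/q^2$. Expanding the left side gives $a^2 - 2ab + b^2$, so the inequality is equivalent to $a^2 - 2ab + 2b^2 + b^2/q^2 \le (q^2+1)p^2 = q^2 p^2 + p^2$. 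Since $a^2 \le p^2 q^2$, it suffices to show $-2ab + 2b^2 + b^2/q^2 \le p^2$; I expect this to follow cleanly from $b = f(x) \le \nabla f(x)^\top x = a$ (gradient inequality at $0$, using $f(0)=0$) together with $b \ge 0$ and $|a| \le pq$, perhaps after completing a square in $b$. The main obstacle will be organizing this last algebraic inequality so that every bound used ($0 \le b \le a$, $a \le pq$) is genuinely available and the estimate is tight enough; a little care is needed because naively bounding $a^2$ by $p^2q^2$ too early could lose too much, so I may instead keep $a$ exact and treat the whole expression as a quadratic form in $(a,b)$ bounded on the region $0 \le b \le a \le pq$.
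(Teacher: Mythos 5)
There is a genuine error at the heart of your computation: you identify $C^S(x,0)$ with $P_{H(x,0)}(0,0)$, the projection of the \emph{origin} onto the half-space $H(x,0)$. But \Cref{l1} says $C^S(x)=P_{H(x)\cap U}(x)$ for $x\in U$: the point being projected is the current iterate $(x,0)$ itself, and the set is $H(x,0)\cap U$, not $H(x,0)$. The origin plays no role here (it is merely the eventual limit $x^*$). Your resulting formula
\[
C^S(x,0)=\frac{\nabla f(x)^{\top}x-f(x)}{\alpha(x)}\,(\nabla f(x),-1)^{\top}
\]
has a nonzero last coordinate whenever $\nabla f(x)^{\top}x>f(x)$, which already contradicts \Cref{p5}(ii) ($C^S$ maps $U$ into $U$); that sanity check alone shows the identification cannot be right. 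Carrying out the projection of $(x,0)$ onto $H(x,0)\cap U$ correctly (the trace of $H(x,0)$ on $U$ is the half-space $\{(z,0):\nabla f(x)^{\top}(z-x)+f(x)\le 0\}$ of $U$) gives
\[
C^S(x,0)=\Bigl(x-\tfrac{f(x)}{\lV\nabla f(x)\rV^{2}}\nabla f(x),\,0\Bigr),
\]
which is the paper's \eqref{uu8}; note $\lV C^S(x,0)\rV^2$ then differs from your value (e.g.\ for $n=1$, $f(x)=x^2$, $x=1$ one gets $1/4$ versus your $1/5$). From the correct formula the estimate \eqref{u5} falls out in two lines: expand the square and use the gradient inequality $f(x)\le\nabla f(x)^{\top}x$ (from $0=f(0)\ge f(x)-\nabla f(x)^{\top}x$), whereupon the cross term and the quadratic term combine to give exactly $-\theta(x)^2\lV x\rV^2$. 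The paper reaches the same formula by a different route — \Cref{pqq} puts $C^S(x,0)$ on the ray from $(x,0)$ through $T^S(x,0)$, and the equidistance $\lV C^S(x)-x\rV=\lV C^S(x)-R^S(x)\rV$ pins down the step length — but your half-space route would work equally well if the right point were projected onto the right set. Separately, your closing algebra is only sketched (``I expect this to follow cleanly\dots''); the inequality you state there is in fact provable, but it bounds the wrong quantity, so the proof as written does not establish \eqref{u5}.
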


\begin{proof} Define
\begin{equation}\label{uu5} 
\beta(x):=\frac{f(x)}{\lV\nabla f(x)\rV^2}.
\end{equation}
By \eqref{aa7},  
\begin{equation}\label{u6}
R^S(x,0)=\left(x-2\frac{f(x)}{\alpha(x)}\nabla f(x), 2\frac{f(x)}{\alpha(x)}\right).
\end{equation}
From \Cref{pqq}, 
\begin{equation}\label{u7}
C^S(x,0)=(x,0)+\eta(T^S(x,0)-(x,0))=\left(x-\eta\frac{f(x)}{\alpha(x)}\nabla f(x),0\right),
\end{equation}
for some $\eta\ge 1$.
By the definition of circumcenter, $\lV C^S(x)-x\rV=\lV C^S(x)-R^S(x)\rV$. Combining
this equation with \eqref{u6} and \eqref{u7}, one obtains $\eta=1+\lV\nabla f(x)\rV^{-1}$,
which implies, in view of \eqref{uu5}, that 
\begin{equation}\label{u8}
\frac{\eta f(x)}{\alpha(x)}=\frac{f(x)}{\lV\nabla f(x)\rV^2}=\beta(x).
\end{equation}
Combining \eqref{u7} and \eqref{u8},
\begin{equation}\label{uu8}
C^S(x,0)=(x-\beta(x)\nabla f(x),0),
\end{equation}
so that
\begin{align}
\lV C^S(x,0)\rV^2 & =\lV x\rV^2-2\beta(x)\nabla f(x)^{\top}x+\beta(x)^2\lV\nabla f(x)\rV^2 \\
& \le
\lV x\rV^2-2\beta(x)f(x)+\beta(x)^2\lV\nabla f(x)\rV^2,\label{u9}
\end{align}
using the fact that $f(x)\le\nabla f(x)^{\top}x$, which follows from that gradient inequality with
the points $x$ and $0$. It follows from \eqref{u9} and the definitions of $\alpha(x), \beta(x)$, that
\begin{align}
\left[\frac{\lV C^S(x,0)\rV}{\lV x\rV}\right]^2& \le 
1-2\beta(x)\frac{f(x)}{\lV x\rV^2}+\left(\frac{\beta(x)\lV\nabla f(x)\rV}{\lV x\rV}\right)^2
\\ & =
\label{u10}
1-\left(\frac{f(x)}{\lV\nabla f(x)\rV\,\lV x\rV}\right)^2=1-\theta(x)^2,
\end{align}
using \eqref{u4} in the last equality.
\end{proof}
 We prove next the linear convergence of the CARM sequence in this setting under the following additional assumption on $f$:

\begin{listi}

\item[B3)]
$\displaystyle
\liminf_{x\to 0}\frac{f(x)}{\lV x\rV\,\lV\nabla f(x)\rV}>0.
$

 \end{listi}
\begin{corollary}\label{c1}
Under the assumptions of \Cref{p11}, if $f$ satisfies \emph{B3} and $\{x^k\}_{k\in\na}$ is the sequence generated by CARM starting 
at any $x^0\in U$, then $\lim_{k\to\infty}x^k=x^*=0$, and
\[
\liminf_{k\to\infty}\frac{\lV x^{k+1}-x^*\rV}{\lV x^k-x^*\rV}\le\sqrt{1-\delta^2}<1,
\]
with
\[
\delta=\liminf_{x\to 0}\frac{f(x)}{\lV x\rV\,\lV\nabla f(x)\rV},
\]
so that $\{x^k\}_{k\in\na}$ converges linearly, with asymptotic constant bounded by $\sqrt{1-\delta^2}$.
\end{corollary}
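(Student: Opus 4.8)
The plan is to read the result off the one-step contraction estimate of \Cref{p11}, using \Cref{t1} to pin down the limit and to guarantee that the iterates stay in $U$, and then to replace the iterate-dependent factor $\theta(x^k)$ by the constant $\sqrt{1-\delta^2}$ through the definition of $\liminf_{x\to 0}$.

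First I would invoke \Cref{t1}: since the separating operator of \eqref{aa34} satisfies A1--A3 (by \Cref{pq2}) and $x^0\in U$, the CARM sequence $\{x^k\}_{k\in\na}$ lies entirely in $U$ and converges to some $x^*\in K\cap U$. As already observed, B1--B2 make $0$ the unique minimizer of the convex function $f$, so $K\cap U=\{0\}$ and hence $x^*=0$, which is the first assertion. I may assume $x^k\neq 0$ for every $k$, since otherwise the sequence terminates and there is nothing to prove; writing $x^k=(\tilde x^k,0)$ with $\tilde x^k\in\re^n\setminus\{0\}$, Assumption B2 forces $\nabla f(\tilde x^k)\neq 0$, so the quantity $\theta(\tilde x^k)$ in \eqref{u4} is well defined.

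Next I would apply \Cref{p11} at the point $\tilde x^k$. Since $x^{k+1}=C^S(x^k)=C^S(\tilde x^k,0)$ and $x^*=0$, the estimate \eqref{u5} becomes $\lV x^{k+1}-x^*\rV^2\le\bigl(1-\theta(\tilde x^k)^2\bigr)\lV x^k-x^*\rV^2$. Before taking limits I would record the elementary bounds $0\le\theta(x)\le 1$ for $0\ne x$ near the origin: the lower bound follows from the gradient inequality at $0$ together with B1--B2 (so $f\ge 0$), and the upper bound from the gradient inequality at $x$ with $f(0)=0$ and Cauchy--Schwarz (so $f(x)\le\lV\nabla f(x)\rV\,\lV x\rV$). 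In particular $\delta\in[0,1]$, and $\delta>0$ by B3, so every square root below is real and $\sqrt{1-\delta^2}\in[0,1)$.

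Finally, I would pass to the limit. Fix $\varepsilon\in(0,\delta)$. Because $x^k\to 0$ and $\lV x^k\rV=\lV\tilde x^k\rV$, also $\tilde x^k\to 0$, so by the very definition of $\delta=\liminf_{x\to 0}\theta(x)$ there is $k_0$ with $\theta(\tilde x^k)>\delta-\varepsilon$ for all $k\ge k_0$, whence $\lV x^{k+1}-x^*\rV\le\sqrt{1-(\delta-\varepsilon)^2}\,\lV x^k-x^*\rV$ for $k\ge k_0$. Taking $\limsup$ and then letting $\varepsilon\to 0^+$ gives $\limsup_{k\to\infty}\lV x^{k+1}-x^*\rV/\lV x^k-x^*\rV\le\sqrt{1-\delta^2}<1$, which in particular yields the displayed $\liminf$ inequality and shows that $\{x^k\}_{k\in\na}$ converges linearly with asymptotic constant at most $\sqrt{1-\delta^2}$. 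The only genuinely delicate point is this last step: there is no continuity or monotonicity of $\theta$ to exploit, so the passage from the iterate-wise factors $\theta(\tilde x^k)$ to the uniform constant $\delta$ must be made purely through the $\liminf_{x\to 0}$ definition, and one must keep track of the size facts $0\le\theta\le 1$ so that all square roots stay real; the substantive geometry is already packaged into \Cref{t1} and \Cref{p11}.
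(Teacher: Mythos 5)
Your proposal is correct and follows essentially the same route as the paper: convergence to $x^*=0$ via \Cref{t1}, the one-step estimate \eqref{u5} from \Cref{p11} applied along the sequence, and passage from $\theta(x^k)$ to $\delta$ using the definition of $\liminf_{x\to 0}$ together with B3. Your $\varepsilon$-argument for that last passage is in fact slightly more careful than the paper's, which simply writes $\sqrt{1-\liminf_k\theta(x^k)^2}=\sqrt{1-\delta^2}$ where only an inequality is needed (and available); the extra bookkeeping of $0\le\theta\le 1$ is a harmless and correct addition.
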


\begin{proof}
Convergence of $\{x^k\}_{k\in\na}$ follows from \Cref{t1}. Since $x^{k+1}=C^S(x^k)$, we invoke \Cref{p11},
observing that $\liminf_{x\to 0}\theta(x)=\delta$, and taking square root and $\limsup$ in \eqref{u5}:
\[
\limsup_{k\to\infty}\frac{\lV x^{k+1}-x^*\rV}{\lV x^k-x^*\rV}\le\sqrt{1-\liminf_{k\to\infty}\theta(x^k)^2}=\sqrt{1-\delta^2}<1,
\]
using \eqref{u4} and  Assumption B3.
\end{proof}

In \cite{Arefidamghani:2021} it was shown that Assumption B3 holds in several cases, \emph{e.g.}, when $f$ is of class ${\cal C}^2$ and
the Hessian $\nabla^2 f(0)$ is positive definite, in which case  
\[\delta\ge\frac{1}{2}\frac{\lambda_{\min}}{\lambda_{\max}},
\]
where $\lambda_{\max}, \lambda_{\min}$ are the largest and smallest eigenvalues of $\nabla^2 f(0)$,
or when $f(x)=\varphi(\lV x\rV)$, where $\varphi:\re\to\re$ is a convex function of class ${\cal C}^r$, satisfying 
$\varphi(0)=\varphi'(0)=0$, in which case $\delta\ge 1/p$, where $p\le r$ is defined as $p=\min\{j:\varphi^{(j)}\ne 0\}$.

In all these instances, in view of \Cref{p10,c1}, 
the CARM sequence converges linearly, while the MAAP one converges sublinearly. If we look at the formulae for $T^S$ and $C^S$,
in \eqref{u1} and \eqref{uu8}, we note that both operators move from $(x,0)$ in the direction $(\nabla f(x),0)$ but with
different step-sizes. Looking now at \eqref{u2} and \eqref{u4}, we see that the relevant factors of these step-sizes, for
$x$ near $0$, are $f(x)/\lV x\rV$ and $f(x)/(\lV x\rV\,\lV\nabla f(x)\rV)$. Since we assume that $\nabla f(0)=0$, the first one
vanishes near 0, inducing the sublinear behavior of MAAP, while the second one, in rather generic situations, will stay away
from $0$. It is the additional presence of $\lV\nabla f(x)\rV$ in the denominator of $\theta(x)$ which makes all the difference.

Now we analyze the second family, which is similar to the first one, excepting that condition B1 is replaced by the following one:

\begin{listi}
  \item[B1')] $f(0)<0$.
 \end{listi}

We also make a further simplifying assumption, which is not essential for the result, but keeps the calculations simpler.
We take $f$ of the form $f(x)=\varphi(\lV x\rV)$ with $\varphi:\re\to\re$. Rewriting B1', B2 in terms of $\varphi$, we
assume that 
\begin{listi}
\item $\varphi:\re\to\re$ is strictly convex and of class ${\cal C} ^1$, 
\item $\varphi(0)<0$, 
\item $\varphi'(0)=0$.
\end{listi}
 
This form of
$f$ gives a one-dimensional flavor to this family. Now, $0\in\re^{n+1}$ cannot be the limit point of the MAAP or the CARM sequences:
$0$ is still the unique minimizer of $f$, but since $f(0)<0$, $0\notin \partial K$, while the limit points of the sequences,
unless they are finite (in which case convergence rates make no sense), do belong to the boundary of $K$. Hence, both $f$ and
$\nabla f$ do not vanish at such limit points, implying that both $\varphi$ and $\varphi'$ are nonzero at the norms of the
limit points. We have the following result for this family.
 
 \begin{proposition}\label{p12} 
Assume that  $U,K\subset\re^{n+1}$ are defined as $U=\{(x,0): x\in\re^n\}$ and 
$K= {\rm epi}(f)$ where $f(x)=\phi(\lV x\rV)$ and $\phi$ satisfies \emph{(i)--(iii)}. Let $C^S,T^S$ be as defined in \eqref{aa7},
and $(x^*,0), (z^*,0)$ the limits of the sequences $\{x^k\}_{k\in\na}$, $\{z^k\}_{k\in\na}$ generated
by CARM and MAAP, starting from some $(x^0,0)\in U$, and some $(z^0,w)\in\re^{n+1}$, respectively. Then
\begin{equation}\label{uu11}  
\lim_{x\to z^*}\frac{\lV T(x,0)-(z^*,0)\rV}{\lV (x,0)-(z^*,0)\rV}=
\frac{1}{1+\phi'(\lV z^*\rV)^2}
\end{equation}
and
\begin{equation}\label{uu12}
\lim_{x\to z^*}\frac{\lV C(x,0)-(x^*,0)\rV}{\lV (x,0)-(x^*,0)\rV}=0.
\end{equation}
\end{proposition}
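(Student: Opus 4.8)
The plan is to exploit the radial form $f(x)=\varphi(\norm{x})$ to reduce both limits to one--dimensional computations along a ray. Writing points of $\re^{n+1}$ as $(x,s)$ with $x\in\re^n$ and $s\in\re$, and using $\nabla f(x)=\varphi'(\norm{x})\norm{x}^{-1}x$ for $x\neq 0$, I would first substitute into the formulas for $T^S(x,0)$ and $C^S(x,0)$ obtained in the proofs of \Cref{p10,p11} (valid whenever $(x,0)\notin K$, i.e.\ $\varphi(\norm{x})>0$), getting
\[
T^S(x,0)=\Bigl(\bigl(1-\tfrac{\varphi(\norm{x})\varphi'(\norm{x})}{(\varphi'(\norm{x})^2+1)\norm{x}}\bigr)x,\ 0\Bigr),\qquad
C^S(x,0)=\Bigl(\bigl(1-\tfrac{\varphi(\norm{x})}{\varphi'(\norm{x})\norm{x}}\bigr)x,\ 0\Bigr).
\]
Thus $T^S$ and $C^S$ send $(x,0)$ to a positive multiple of $(x,0)$ once $\norm{x}$ is close to $t^*$ (the scalar factor then tending to $1$), so --- the sequences being convergent by \Cref{t1} --- their tails lie on the fixed rays $\{tz^*:t>0\}$ and $\{tx^*:t>0\}$. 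Discarding the degenerate case in which a sequence meets $K$ and is eventually constant (no convergence rate to discuss), we have $\varphi(\norm{z^k})>0$ and $\varphi(\norm{x^k})>0$ for all $k$, hence $\varphi(\norm{z^*})=\varphi(\norm{x^*})=0$; since $\varphi$ is strictly convex with $\varphi'(0)=0$ (so $\varphi'>0$ on $(0,\infty)$ and $\varphi$ strictly increasing there) and $\varphi(0)<0$, the number $t^*:=\norm{z^*}=\norm{x^*}$ is the unique positive root of $\varphi$, and $\varphi'(t^*)>0$. (Here $T$ and $C$ in \eqref{uu11}--\eqref{uu12} stand for $T^S$ and $C^S$, and in \eqref{uu12} the limit is taken at $x^*$, the CARM limit.)

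For \eqref{uu11} I would write $z^k=r_ku$ with $u=z^*/t^*$ and $r_k=\norm{z^k}\to t^*$, $r_k>t^*$; by the displayed formula and collinearity, $r_{k+1}=h(r_k)$ with $h(r):=r-\tfrac{\varphi(r)\varphi'(r)}{\varphi'(r)^2+1}$, and $h(t^*)=t^*$ because $\varphi(t^*)=0$. Using $\varphi(t^*)=0$ once more,
\[
\frac{h(r)-h(t^*)}{r-t^*}=1-\frac{\varphi(r)-\varphi(t^*)}{r-t^*}\cdot\frac{\varphi'(r)}{\varphi'(r)^2+1},
\]
whose limit as $r\to t^*$ equals $1-\varphi'(t^*)^2/(\varphi'(t^*)^2+1)=(1+\varphi'(t^*)^2)^{-1}$, using only differentiability of $\varphi$ at $t^*$ and continuity of $\varphi'$ (no second derivative). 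Hence $\norm{z^{k+1}-z^*}/\norm{z^k-z^*}=|r_{k+1}-t^*|/|r_k-t^*|\to(1+\varphi'(\norm{z^*})^2)^{-1}$, i.e.\ \eqref{uu11}, the limit there being read along the sequence (equivalently, radially at $z^*$).

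For \eqref{uu12} I would write $x^k=s_kv$ with $v=x^*/t^*$ and $s_k=\norm{x^k}\to t^*$, $s_k>t^*$; then $s_{k+1}=H(s_k)$ where $H(s):=s-\varphi(s)/\varphi'(s)$ is the Newton map for the equation $\varphi=0$, and $H(t^*)=t^*$. To obtain $|H(s)-t^*|/|s-t^*|\to0$ with only $\varphi\in{\cal C}^1$, I would use $\varphi(t^*)=0$ and the fundamental theorem of calculus to write
\[
H(s)-t^*=\frac{(s-t^*)\varphi'(s)-\varphi(s)}{\varphi'(s)}=\frac{1}{\varphi'(s)}\int_{t^*}^{s}\bigl(\varphi'(s)-\varphi'(\tau)\bigr)\,d\tau,
\]
so that $|H(s)-t^*|\le|s-t^*|\,|\varphi'(s)|^{-1}\sup\{\,|\varphi'(s)-\varphi'(\tau)|:\tau\text{ between }t^*\text{ and }s\,\}$. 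Since $\varphi'$ is continuous and $\varphi'(t^*)\neq0$, for $s$ near $t^*$ the factor $|\varphi'(s)|^{-1}$ stays bounded while the supremum tends to $0$; thus $|H(s)-t^*|/|s-t^*|\to0$, whence $\norm{x^{k+1}-x^*}/\norm{x^k-x^*}=|s_{k+1}-t^*|/|s_k-t^*|\to0$, i.e.\ \eqref{uu12}.

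The main obstacle --- and the reason the reduction above is essential --- is that the limits in \eqref{uu11}--\eqref{uu12} are \emph{not} genuine two--sided limits in $\re^{n+1}$: if $x$ approached $z^*$ (resp.\ $x^*$) along a direction tangent to the sphere $\norm{y}=t^*$, the displacements $T^S(x,0)-(x,0)$ and $C^S(x,0)-(x,0)$ would, to first order, be purely tangential, and both quotients would tend to $1$ rather than to the asserted values. It is only because $f$ is radial that the MAAP and CARM iterates remain on one ray through the origin, so that the pertinent approach to the limit is radial and the one--dimensional estimates apply. A secondary technical point is that $\varphi$ is merely ${\cal C}^1$, so the usual ${\cal C}^2$ Taylor/Newton argument for the superlinear rate in \eqref{uu12} is unavailable; one must instead control $H(s)-t^*$ through the integral remainder and the modulus of continuity of $\varphi'$, as above.
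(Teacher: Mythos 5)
Your proof is correct and follows essentially the same route as the paper's: derive the radial formulas for $T^S(x,0)$ and $C^S(x,0)$, use collinearity to reduce everything to a one-dimensional map of $t=\lV x\rV$, and evaluate both limits via the first-order difference quotient $(\varphi(t)-\varphi(t^*))/(t-t^*)\to\varphi'(t^*)$, which needs only $\varphi\in{\cal C}^1$. Your additional observation that the limits in \eqref{uu11}--\eqref{uu12} would fail along tangential approaches and must therefore be read radially (along the sequences, whose tails stay on a fixed ray through the origin) is a genuine point of care that the paper glosses over, and your integral-remainder estimate for the Newton map is just a mildly more quantitative version of the paper's direct difference-quotient computation of the zero limit.
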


\begin{proof} We start by rewriting the formulae for $C^S(x), T^S(x)$ in terms of $\varphi$. 
We also define $t:=\lV x\rV$. Using \eqref{uu1}, \eqref{u1}, \eqref{u4}
and \eqref{u5}, we obtain 
\begin{equation}\label{u11}
T^S(x,0)=\left(\left[1-\frac{\varphi(\lV x\rV)\varphi'(\lV x\rV)}{(\varphi'(\lV x\rV)^2+1)\lV x\rV}\right]x,0\right)=
\left(\left[1-\frac{\varphi(t)(\varphi'(t)}{(\varphi'(t)^2+1)t}\right]x,0\right)
\end{equation}
and
 \begin{equation}\label{u12}
C^S(x,0)=
\left(\left[1-\frac{\varphi(\lV x\rV)}{\varphi'(\lV x\rV)\lV x\rV}\right]x,0\right)=\left(\left[1-\frac{\varphi(t)}{\varphi'(t)t}\right]x,0\right).
\end{equation}
Note that $x,T^S(x), C^S(x)$ are collinear (the one-dimensional flavor!), so that the same happens with $x^*,z^*$. 
Let $r:=\lV x^*\rV, s:=\lV z^*\rV$, so that $x^*=(r/t)x, z^*=(s/t)x$. Then, using \eqref{u11}, \eqref{u12}, we get
\begin{align}
\frac{\lV T^S(x,0)-(z^*,0)\rV}{\lV (x,0)-(z^*,0)\rV}&=\frac{t-r-\frac{\varphi(t)\varphi'(t)}{\varphi'(t)^2+1}}{t-r}=
\left[1-\frac{\varphi(t)}{t-r}\right]\left[\frac{\varphi'(t)}{\varphi'(t)^2+1}\right]\\&=
\left[1-\frac{\varphi(t)-\varphi(r)}{t-r}\right]\left[\frac{\varphi'(t)}{\varphi'(t)^2+1}\right],\label{u13} 
\end{align}
and
\begin{align}
\frac{\lV C^S(x,0)-(x^*,0)\rV}{\lV (x,0)-(x^*,0)\rV}&=\frac{t-s-\frac{\varphi(t)}{\varphi'(t)}}{t-s}=
1-\left[\frac{\varphi(t)}{t-s}\right]\frac{1}{\varphi'(t)}\\&=
1-\left[\frac{\varphi(t)-\varphi(r)}{t-s}\right]\frac{1}{\varphi'(t)},\label{u14}
\end{align}
using in the last equalities of \eqref{u13} and \eqref{u14} the fact that $\varphi(r)=\varphi(s)=0$, which results from $f(x^*)=f(z^*)=0$.
Now we take limits with $x\to z^*$, $x\to x^*$ in the leftmost expressions of \eqref{u13}, \eqref{u14}, 
which demands limits with $t\to s$, $t\to r$
in the rightmost expressions of them. 
\begin{align}
\lim_{x\to x^*}\frac{\lV T^S(x,0)-(z^*,0)\rV}{\lV (x,0)-(z^*,0)\rV}&=
\lim_{t\to r}\left[1-\frac{\varphi(t)-\varphi(r)}{t-r}\right]\left[\frac{\varphi'(t)}{\varphi'(t)^2+1}\right]\\&=
1-\frac{\varphi'(r)^2}{\varphi'{r}^2+1}=\frac{1}{\varphi'(r)^2+1}=\frac{1}{\varphi'(\lV z^*\rV)^2+1},
\end{align}
and 
\[
\lim_{x\to z^*}\frac{\lV C^S(x,0)-(x^*,0)\rV}{\lV (x,0)-(x^*,0)\rV}=
\lim_{t\to s}\left[1-\frac{\varphi(t)-\varphi(r)}{t-s}\right]\frac{1}{\varphi'(t)}=
1-\frac{\varphi'(s)}{\varphi'(s)}=0,
\]
completing the proof.
\end{proof}

\begin{corollary}\label{c2}
Under the assumptions of \Cref{p12}, the sequence generated by MAAP converges Q-linearly to a point $(x^*,0)\in K\cap U$, 
with asymptotic constant equal to $1/(1+\varphi'(\lV x^*\rV)^2)$, and the sequence generated by CARM converges superlinearly.
\end{corollary}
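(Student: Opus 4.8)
The plan is to deduce \Cref{c2} directly from \Cref{p12}, \Cref{t1}, and the definition of Q-linear and Q-superlinear convergence (\Cref{d1}). First I would invoke \Cref{t1} to get that the MAAP sequence converges to a point $(z^*,0)\in K\cap U$ and the CARM sequence to a point $(x^*,0)\in K\cap U$, in the notation of \Cref{p12} (so the MAAP limit, written $(x^*,0)$ in the statement of the corollary, is this $(z^*,0)$). As pointed out in the paragraph before \Cref{p12}, a finite sequence leaves nothing to discuss, so I assume both sequences are infinite; then no iterate lies in $K\cap U$, both limits lie on $\partial K$, and since $f(x)=\varphi(\lV x\rV)$ with $\varphi(0)<0$ we obtain $\varphi(\lV z^*\rV)=\varphi(\lV x^*\rV)=0$ and $\lV z^*\rV,\lV x^*\rV>0$. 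In particular no iterate equals its limit, so \Cref{d1} applies to both sequences.

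For MAAP, the iterates lie in $U$ and each is mapped to the next by $T^S$, so the asymptotic constant of the sequence equals the limit, taken along the iterates, of $\lV T^S(x,0)-(z^*,0)\rV/\lV(x,0)-(z^*,0)\rV$. Because \eqref{uu11} provides the \emph{full} limit of this quotient as $x\to z^*$, that common value is $1/(1+\varphi'(\lV z^*\rV)^2)$. To conclude Q-linear convergence it then remains to check that this number lies in $(0,1)$: strict convexity of $\varphi$ together with $\varphi'(0)=0$ forces $\varphi'(t)>0$ for all $t>0$, and $\lV z^*\rV>0$, so $\varphi'(\lV z^*\rV)>0$. For CARM the same reasoning, now with \eqref{uu12} in place of \eqref{uu11}, shows that the asymptotic constant is $0$, which by \Cref{d1} is precisely Q-superlinear convergence.

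I do not expect any real obstacle. The two points that deserve a line of justification are: the passage from the pointwise limits in \Cref{p12} to the sequential asymptotic constants, which is legitimate precisely because those limits exist and because the iterates never coincide with their limit; and the strict inequality $1/(1+\varphi'(\lV z^*\rV)^2)<1$ for MAAP, which reduces to the elementary fact that $\varphi'$ cannot vanish at the positive number $\lV z^*\rV$.
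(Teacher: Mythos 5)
Your proposal is correct and follows essentially the same route as the paper: the paper's proof simply observes that the iterates satisfy $(x^{k+1},0)=T^S(x^k,0)$ (resp.\ $(z^{k+1},0)=C^S(z^k,0)$) and lie in $U$, so the result "follows immediately" from \eqref{uu11} and \eqref{uu12}. You merely make explicit the small details the paper leaves implicit — that the iterates never equal the limit so \Cref{d1} applies, and that $\varphi'(\lV z^*\rV)>0$ (a fact already noted in the paragraph preceding \Cref{p12}) so the MAAP constant lies strictly in $(0,1)$.
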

\begin{proof} 
Recall that if $\{(x^k,0)\}_{k\in\na}$ is the MAAP sequence, then $(x^{k+1},0)=T^S(x^k,0)$, and if $\{(z^k,0)\}_{k\in\na}$ is the CARM 
sequence, then $(z^{k+1},0)=C^S(z^k,0)$. Recall also that for both sequences the last components of the iterates vanish because
$\{x^k\}_{k\in\na}, \{z^k\}_{k\in\na}\subset U$. Then the result follows immediately from \eqref{uu11} and \eqref{uu12} in \Cref{p12}.
\end{proof} 
    
We mention that the results of \Cref{c2} coincide with those obtained in Corollary 4.11 of \cite{Arefidamghani:2021} for the
sequences generated by MAP and CRM applied to the same families of instances of CFP, showing that the convergence rate
results of the exact methods are preserved without any deterioration also in these cases.

\section{Numerical comparisons}\label{sec:numerical}

In this section, we perform numerical comparisons between CARM, MAAP, CRM and MAP. These methods are employed for solving the particular CFP  of finding a common point  in the intersection of finitely many ellipsoids, that is, finding 
\[\label{eq:ellipsoids} \bar x \in \mathcal{E} = \bigcap_{i=1}^{m}\mathcal{E}_i\subset \re^n,\] with each ellipsoid  $\mathcal{E}_i$ being given by
\[\mathcal{E}_{i}:=\left\{x\in \re^n : g_i(x) \leq 0 \right\}, \text{ for } i=1,\ldots,m, \]
where $g_i:\re^n\to\re$ is given by $g_i(x) =  x^{\top} A_{i} x +2 x^{\top} b^{i} -  \alpha_{i}$, each $A_{i}$ is a symmetric positive definite matrix, $b^{i}$ is an $n$-vector, $\alpha_{i}$ is a positive scalar. 

Problem \eqref{eq:ellipsoids}  has importance own its own (see \cite{Lin:2004,Jia:2017}) and both CRM and MAP are suitable for solving it. Nevertheless, the main motivation for tackling it with approximate projection methods is that the computation of exact projections onto ellipsoids is a formidable burden for any algorithm to bear. Since  the gradient of each $g_i$ is easily available, we can consider the separable operators given in \Cref{ex1,ex2} and use CARM and MAAP to solve problem \eqref{eq:ellipsoids} as well. What is more, the experiments illustrate that, in this case, CARM handily outperforms CRM, in terms of CPU time, while still being competitive in terms of iteration count. 
The exact projection onto each ellipsoid is so demanding that even MAAP has a better CPU time result than CRM.

The four methods are employed upon Pierra's product space reformulation, that is, we seek a point $\mathbf{x}^* \in \mathbf{K}\cap \mathbf{D}$,  where $\mathbf{K} \coloneqq \mathcal{E}_{1}\times \mathcal{E}_{2}\times \cdots \times \mathcal{E}_{m}$ and $\mathbf{D}$ is the diagonal space. For each sequence $\{\mathbf{x}^k \}_{k\in\na}$ that we generate, we consider the tolerance $\varepsilon \coloneqq \num{e-6}$ and use as stopping criteria the \emph{gap distances}
\[
\lVert \mathbf{x}^k - P_{\mathbf{K}} (\mathbf{x}^k)\rVert <  \varepsilon \quad \text{ or }  \quad  \lVert \mathbf{x}^k - P^S_{\mathbf{K}} (\mathbf{x}^k)\rVert <  \varepsilon,
\]
where $P_{\mathbf{K}} (\mathbf{x}^k)$ is utilized for CRM and MAP, and $P^S_{\mathbf{K}} (\mathbf{x}^k)$ is used for CARM and MAAP. Note that if the correspondent criterion is not met in a given iteration, the projection computed is employed to yield the next step.    We also set  the maximum number of iterations as \num{50000}.  

To execute our tests, we randomly generate \num{160} instances of \eqref{eq:ellipsoids} in the following manner. We range the dimension size $n$ in $\{10, 50, 100, 200\}$ and for each $n$ we took  the number $m$ of underlying sets varying in  $\{5, 10, 20 ,50\}$. For each of these \num{16} pairs $(m,n)$ we build \num{10} randomly generated instances of \eqref{eq:ellipsoids}. Each matrix $A_i$ is of the form $A_i = \gamma \Id +B_i^\top B_i$, with  $B_i \in\re^{n\times n}$, $\gamma \in \re_{++}$.   Matrix $B_i$ is a sparse matrix sampled from the standard normal distribution  with sparsity density  $p=2 n^{-1}$  and each vector $b^i$ is sampled from the uniform distribution between $[0,1]$. We then choose each $\alpha_i$ so that $\alpha_i > (b^i)^\top Ab^i$, which ensures that $0$ belongs to every $\mathcal{E}_i$, and thus \eqref{eq:ellipsoids} is feasible. The initial point $x^0$ is of the form $(\eta,\eta,\ldots, \eta)\in\re^n$, with $\eta$ being  negative and $\lv \eta\rv$ sufficient large, guaranteeing  that  $x^0$ is far from all $\mathcal{E}_i$’s.

The computational experiments were performed on an Intel Xeon W-2133 3.60GHz with 32 GB of RAM running Ubuntu 20.04 and using \texttt{Julia v1.5}  programming language~\cite{Bezanson:2017}.  The codes for our experiments are fully available in \url{https://github.com/lrsantos11/CRM-CFP}.

We remark that, as CRM and MAP rely on the computation of exact projections, \texttt{ALGENCAN}~\cite{Birgin:2014}, an Augmented Lagrangian algorithm implemented in \texttt{Fortran} (wrapped in \texttt{Julia} using \texttt{NLPModels.jl}~\cite{Siqueira:2019}) was used in our code to compute  projections onto the ellipsoids. Each projection was found by solving the correspondent quadratic minimization problem with quadratic constraints (the $g_i$‘s).

\begin{figure}[ht!]
  \centering
  \includegraphics[scale=0.6]{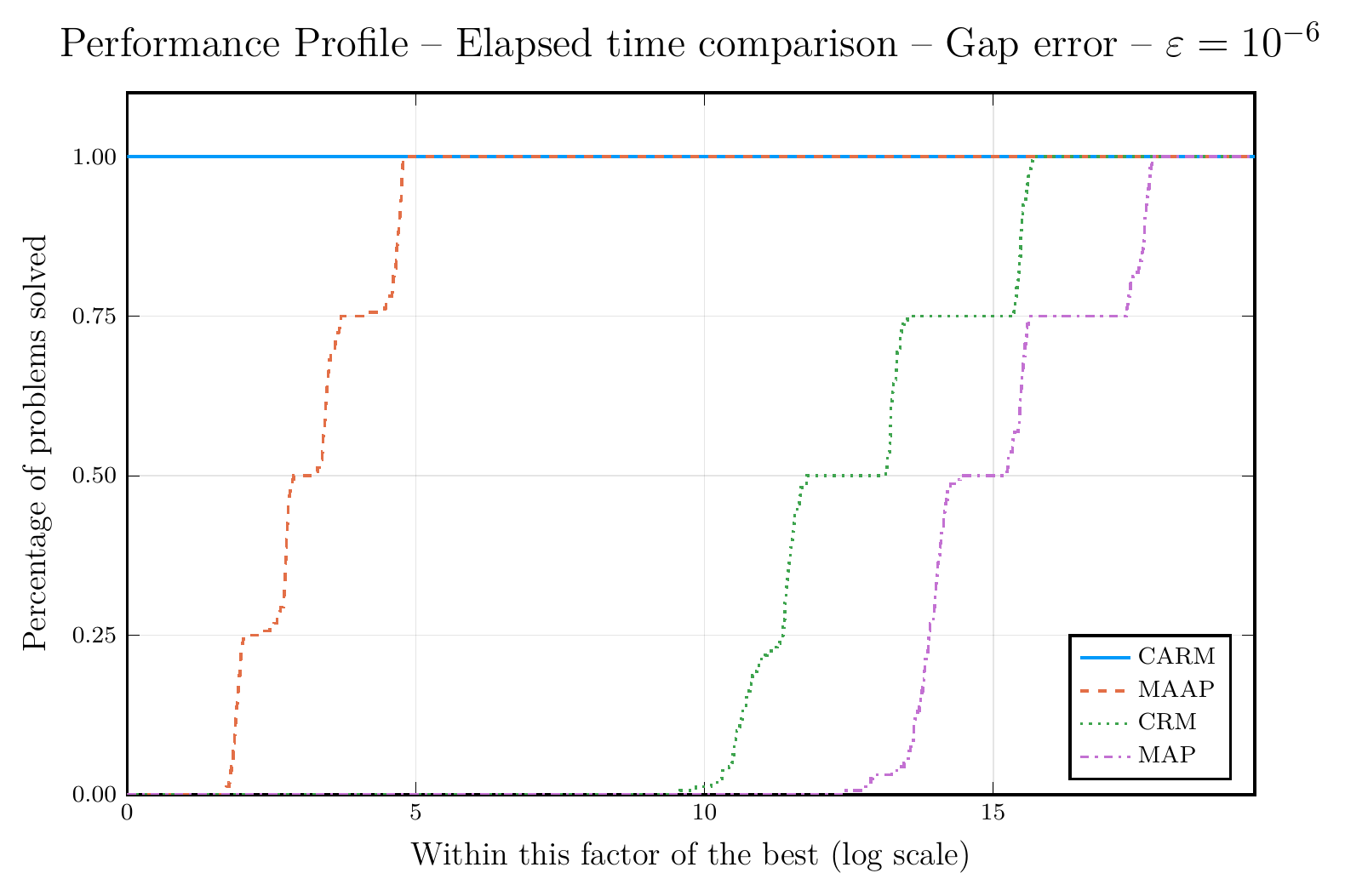}
  \caption{Performance profile of experiments with ellipsoidal feasibility  -- CARM, MAAP, CRM and MAP}
  \label{fig:pp-Ellipsoid}
\end{figure}

The results are summarized in~\Cref{fig:pp-Ellipsoid}  using a performance profile~\cite{Dolan:2002}.  Performance profiles allow one to compare different methods on problems set with respect to a performance measure.  The vertical axis indicates the percentage of problems solved, while the horizontal axis indicates, in log-scale, the corresponding factor of the performance index  used by the best solver. In this case, when looking at CPU time (in seconds), the performance profile shows that CARM always did better than the other three methods.  The picture also shows that MAAP took less time than CRM and MAP.  We conclude this examination by presenting, in \Cref{table:ellipsoid}, the following descriptive statistics of the benchmark of CARM, MAAP, CRM and MAP: mean, maximum (max), minimum (min) and standard deviation (std) for iteration count (it) and CPU time in seconds (CPU (s)). In particular, CARM was, in average,  almost $\num{3000}$ times faster than CRM.

\begin{table}[ht!]
  \centering
  \caption{Statistics of the experiments  (in number of iterations and CPU time)}
  \label{table:ellipsoid}
  \sisetup{
table-parse-only,
table-figures-decimal = 4,
table-format = +3.4e-1,
table-auto-round,
% output-exponent-marker = \text{e},
% 
}
\begin{tabular}{lrSSSS}
\toprule  \textbf{Method} & &   {\textbf{mean}} &  {\textbf{max}} &   {\textbf{min}} &   {\textbf{std}} \\
\cmidrule(lr){3-6}
% \cmidrule(lr){1-1}
% \midrule
CARM & \texttt{it} & 6.4875   & 8           & 6            & 0.5259    \\
 & \texttt{CPU (s)} & \num{1.4632e-3} & \num{8.4866e-3}  & \num{9.3599e-5}   &\num{ 1.8643}  \\
\cmidrule(lr){2-6}
MAAP &  \texttt{it} & 260.75   & 689         & 54           & 213.346   \\
 & \texttt{CPU (s)} & \num{2.7132e-2} &  0.9736 & \num{3.2268e-4}   & \num{4.7248e-2}   \\
\cmidrule(lr){2-6}
CRM & \texttt{it}   & 4.35     & 6           & 3            & 0.8256    \\
 & \texttt{CPU (s)} & 39.592   & 358.846   & 0.0872    & 82.9985   \\
\cmidrule(lr){2-6}
MAP   & \texttt{it} & 257.856  & 671         & 54           & 211.537   \\
 & \texttt{CPU (s)} & 182.567  & 1616.24   & 1.0315     & 391.669     \\
\bottomrule
\end{tabular}
  
\end{table}

\section{Concluding remarks}

In this paper, we have introduced a new circumcenter iteration for solving convex feasibility problems. The new method is called CARM, and it utilizes outer-approximate projections instead of the exact ones taken in the original CRM.

We have drawn our attention to questions on whether similar convergence results known for CRM could be generalized for CARM. We have derived many positive theoretical statements in this regard. For instance, the convergence of CARM was proven, and linear rates were achieved under error bound conditions. In addition to that, we presented numerical experiments in which subgradient approximate projections were employed. This choice of approximate projections is a particular case of the ones covered by CARM. The numerical results show CARM to be much faster in CPU time than the other methods we compared it with, namely, the pure CRM, the classical  MAP, and an approximate version of MAP called MAAP.

%%%%%%%%%%%%%%%%%%%%%%%%%%%%%%%%%%%%%%%%%%%%%%
%%                                          %%
%% Backmatter begins here                   %%
%%                                          %%
%%%%%%%%%%%%%%%%%%%%%%%%%%%%%%%%%%%%%%%%%%%%%%

\begin{backmatter}

\section*{Acknowledgements}%% if any

The authors would like to thank the two anonymous referees for their valuable suggestions which 
improved this manuscript.

\section*{Funding}%% if any

RB was partially supported by the \emph{Brazilian Agency Conselho Nacional de Desenvolvimento Cient\'ifico e Tecnol\'ogico} (CNPq), Grants 304392/2018-9 and 429915/2018-7;  YBC was partially supported by the \emph{National Science Foundation} (NSF), Grant DMS -- 1816449.
% Text for this section\ldots

% \section*{Abbreviations}%% if any
% Text for this section\ldots

\section*{Availability of data and materials}%% if any

Not applicable.

% \section*{Ethics approval and consent to participate}%% if any
% Text for this section\ldots

\section*{Competing interests}
The authors declare that they have no competing interests.

% \section*{Consent for publication}%% if any
% Text for this section\ldots

\section*{Authors' contributions}
All authors read and approved the final manuscript.

% \section*{Authors' information}%% if any
% Text for this section\ldots

%%%%%%%%%%%%%%%%%%%%%%%%%%%%%%%%%%%%%%%%%%%%%%%%%%%%%%%%%%%%%
%%                  The Bibliography                       %%
%%                                                         %%
%%  Bmc_mathpys.bst  will be used to                       %%
%%  create a .BBL file for submission.                     %%
%%  After submission of the .TEX file,                     %%
%%  you will be prompted to submit your .BBL file.         %%
%%                                                         %%
%%                                                         %%
%%  Note that the displayed Bibliography will not          %%
%%  necessarily be rendered by Latex exactly as specified  %%
%%  in the online Instructions for Authors.                %%
%%                                                         %%
%%%%%%%%%%%%%%%%%%%%%%%%%%%%%%%%%%%%%%%%%%%%%%%%%%%%%%%%%%%%%

% if your bibliography is in bibtex format, use those commands:
\bibliographystyle{bmc-mathphys} % Style BST file (bmc-mathphys, vancouver, spbasic).
\bibliography{refs}      % Bibliography file (usually '*.bib' )
% for author-year bibliography (bmc-mathphys or spbasic)
% a) write to bib file (bmc-mathphys only)
% @settings{label, options="nameyear"}
% b) uncomment next line
% \nocite{label}

% or include bibliography directly:
% \begin{thebibliography}
% \bibitem{b1}
% \end{thebibliography}

%%%%%%%%%%%%%%%%%%%%%%%%%%%%%%%%%%%
%%                               %%
%% Figures                       %%
%%                               %%
%% NB: this is for captions and  %%
%% Titles. All graphics must be  %%
%% submitted separately and NOT  %%
%% included in the Tex document  %%
%%                               %%
%%%%%%%%%%%%%%%%%%%%%%%%%%%%%%%%%%%

%%
%% Do not use \listoffigures as most will included as separate files

% \section*{Figures}
%   \begin{figure}[h!]
%   \caption{Sample figure title}
% \end{figure}

% \begin{figure}[h!]
%   \caption{Sample figure title}
% \end{figure}

% %%%%%%%%%%%%%%%%%%%%%%%%%%%%%%%%%%%
% %%                               %%
% %% Tables                        %%
% %%                               %%
% %%%%%%%%%%%%%%%%%%%%%%%%%%%%%%%%%%%

% %% Use of \listoftables is discouraged.
% %%
% \section*{Tables}
% \begin{table}[h!]
% \caption{Sample table title. This is where the description of the table should go}
%   \begin{tabular}{cccc}
%     \hline
%     & B1  &B2   & B3\\ \hline
%     A1 & 0.1 & 0.2 & 0.3\\
%     A2 & ... & ..  & .\\
%     A3 & ..  & .   & .\\ \hline
%   \end{tabular}
% \end{table}

%%%%%%%%%%%%%%%%%%%%%%%%%%%%%%%%%%%
%%                               %%
%% Additional Files              %%
%%                               %%
%%%%%%%%%%%%%%%%%%%%%%%%%%%%%%%%%%%

\end{backmatter}
\end{document}